\numberwithin{equation}{section} \makeatletter
\renewcommand{\subsection}{\@startsection
{subsection}{2}{0mm}{\baselineskip}{0.25cm}
{\normalfont\normalsize\bf}} \makeatother
\newtheorem{theorem}{Theorem}[section]
\newtheorem{lemma}[theorem]{Lemma}
\newtheorem{corollary}[theorem]{Corollary}
\newtheorem{proposition}[theorem]{Proposition}
\newtheorem{ass}[theorem]{Assumption}
\theoremstyle{remark}
\newtheorem{remark}[theorem]{Remark}
\def \F {\mathcal F}
\def \P {\mathbf P}
\def \I {{\mathbf 1}}
\def \R {\mathbb R}
\def \bF {\mathbb F}
\newcommand{\ud}{\mathrm d}
\newcommand{\esp}[2][\mathbb E] {#1\left[#2\right]}
\newcommand{\espt}[2][\mathbb E_t] {#1\left[#2\right]}
\newcommand{\bgamma}{\bm{\gamma}}
\newcommand{\btau}{\bm{\tau}}
\newcommand{\bq}{\bm{q}}
\newcommand{\CO}{\rm CO_2}
\begin{document}
	
	\author[K.~Colaneri]{Katia Colaneri}\address{Katia Colaneri, Department of Economics and Finance, University of Rome Tor Vergata, Via Columbia 2, 00133 Rome, Italy.}\email{katia.colaneri@uniroma2.it}

	\author[R.~Frey]{R\"{u}diger Frey}\address{R\"{u}diger Frey, Institute for Statistics and Mathematics, Vienna University of Economics and Business, Welthandelpatz 1, 1020 Vienna, Austria}\email{rfrey@wu.ac.at}

	\author[V.~K\"{o}ck]{Verena K\"{o}ck}\address{Verena K\"{o}ck, Institute for Statistics and Mathematics, Vienna University of Economics and Business, Welthandelpatz 1, 1020 Vienna, Austria}\email{verena.koeck@wu.ac.at}

\title[]{Random carbon tax policy and investment into emission abatement technologies}

\date{\today}
	
\begin{abstract}
We study the problem of a profit maximizing electricity producer who has to pay carbon taxes and who decides on investments into  technologies for the  abatement of $\CO$ emissions in an environment where   carbon tax policy is random and where  the investment in the abatement technology is divisible, irreversible and subject to transaction costs.
We consider two approaches for modelling the randomness in taxes. First we assume a precise probabilistic model for the tax process, namely a pure jump Markov process (so-called tax risk); this leads to a stochastic control  problem for the investment strategy.  
Second, we  analyze the case of  an {uncertainty-averse} producer who uses a differential game to decide on optimal production and investment.   We carry out a rigorous mathematical analysis of the producer's optimization problem and of the associated nonlinear PDEs in both cases. Numerical methods are used  to study quantitative properties of the optimal investment strategy.   
We find that in the tax risk case   the   investment  in abatement technologies is typically lower  than in a benchmark  scenario with  deterministic taxes. However, there are a couple of interesting new twists related to  production technology, divisibility of the investment, tax rebates and investor expectations.   In the stochastic differential game on the other hand an increase in uncertainty  might stipulate more investment.
\end{abstract}

\maketitle

\noindent {\bf Keywords}:   Carbon taxes, Emission abatement, Optimal investment strategies, Stochastic control, Stochastic differential games.

%\noindent {\bf 2010 MSC}: 60G55, 93E20.

%60G55 jump processes
%60J60 diffusion processes
%93E20 Optimal stochastic control
%91G10 Portfolio theory
%91B30 Risk theory, insurance
%

%\tableofcontents

\date{\today}

\section{Introduction}

Carbon taxes and trading of emission certificates are commonly considered key policy tools for reducing carbon pollution  and hence for mitigating  climate change. Academic contributions in this field from an environmental economic perspective have mainly focused on  %Most of the existing work on this topic is concerned with 
\emph{optimal} tax schemes or optimal carbon prices for an efficient emission reduction, see for instance the seminal contributions  by %Examples from the economics literature include, for instance, 
\citet{nordhaus1993Dice, nordhaus2019climate, golosov2014optimal, acemoglu2012environment}. 
More recently this problem has been addressed within the literature on continuous-time stochastic control by, e.g., \citet{bib:aid-biagini-23}, \citet{bib:aid-kemper-touzi-23}, or \citet{bib:carmona-et-al-21} (these papers are discussed in Section \ref{sec:literature}). 
While the design of an optimal tax scheme or carbon price is a very  relevant research question, in reality  emission tax policy is affected by many unpredictable factors such as changes in political sentiment and  election results, lobbying by industry groups, or developments in international climate policy.
Therefore  future tax rates are random and long-term emission tax schemes announced by governments are not  fully credible from the viewpoint of carbon emitting producers. This is a prime example of the so-called \emph{climate policy uncertainty}. In environmental  economics it is often  argued that policy uncertainty  has a negative impact on investments in  carbon abatement technology. For instance the British newspaper The Economist \cite{bib:economist-23} writes 
\begin{quote}
     Political polarisation [regarding the relevance of climate change] means bigger flip-flops when power changes hands: imagine France under the wind-farm-loathing Marine Le Pen. Everywhere, making climate policy less predictable makes it harder for investors to plan for the long term, as they must.
\end{quote} 
 On the  policy side, a report by the International Energy Agency (\citet{bib:IEA-08}) discusses the  impact of policy-induced jumps in  carbon prices  on the incentives for investing in low-carbon power-generation technologies. The report argues   that 
``the greater the level of policy uncertainty, the less effective climate change policies will be at incentivising investment in low-emitting technologies.''
The empirical study  of   \citet{berestycki2022measuring}  develops indices for climate policy uncertainty  that are based on newspaper coverage, and they carry out  a regression analysis showing  that higher levels of uncertainty are associated with a substantial decrease in the  investment level in carbon intensive industries.

\citet{bib:fuss-et-al-08} and of  \citet{bib:yang-et-al-07} (this latter paper is related to the IEA-report of \citet{bib:IEA-08}) propose  formal models for analysing the implications of randomness in carbon price policy (which is economically very similar to a carbon tax policy) on investment in carbon capture and storage technologies.  To the best of our knowledge, these are the only contributions of this type so far. 
In these works there are two sources of randomness.   First, there are fluctuations in  the electricity and in the carbon price. Second, there is  randomness in the carbon pricing policy itself:   at a deterministic future time point $\bar t$ government announces if the  carbon pricing scheme continues or if  the policy is abolished, where the probability of both events is known. \citet{bib:fuss-et-al-08} and \citet{bib:yang-et-al-07} use a real options approach where investment is immediate (no construction times), indivisible, and irreversible. In that setup it is optimal to  postpone  investment decisions until the announcement  date $\bar t$, so that randomness in policy delays investment into abatement technology. Fluctuations in market prices, on the other hand, have very little impact on the investment decision.    From a technical point of view,  \cite{bib:fuss-et-al-08} and \cite{bib:yang-et-al-07} consider a discrete time settings and solve the optimization  via numerical methods based on Monte-Carlo simulation and backward induction.

In our paper we go beyond the analysis of \cite{bib:fuss-et-al-08} and \cite{bib:yang-et-al-07}. We study the problem of a profit maximizing electricity producer who pays taxes on emissions and may invest into emission abatement technology in a continuous time framework that incorporates  a rich set of approaches for modeling randomness in carbon tax policy. %Precisely, we consider a  profit-maximizing   who has to pay emission taxes (rather than considering carbon price) and who  has the option to invest in emission  abatement technology. 
In our setup  investments are  divisible as,  for instance, in the installation of new solar panels, and the  producer chooses the  \emph{rate} $\bgamma $ at which she invests. %in the abatement technology. 
Moreover, investment  is irreversible (i.e.~$\gamma_t \ge 0$)  and subject to  {transaction costs} that prohibit  a  rapid  adjustment   of  the investment level. This framework includes stylized forms of emission abatement technology, such as the case where producer has the option to retrofit existing gas-fired power plants with a carbon capture and storage or \emph{filter} technology, and the case where she may reduce the costs for producing electricity  by investing into a novel green technology with lower marginal production costs. We discuss these examples in detail in the theoretical part of the paper and in our numerical experiments.
%This framework includes two stylized forms of emission abatement technology:  first, the case where producer has the option to retrofit existing gas-fired power plants with a carbon capture and storage or \emph{filter} technology; second, the case where she may reduce the costs for producing electricity  by investing into a novel green technology with lower marginal production costs.% We analyse these examples in the numerical experiments.  

To deal with randomness in  carbon taxes we consider two different approaches. First, we assume a precise probabilistic model for the tax process, namely a pure jump Markov process.  In decision-theoretic terms this corresponds to the paradigm of {risk}, so that we refer to this situation as  \emph{tax risk}. In the tax risk  case  the producer is confronted with a   stochastic control problem with the investment rate  as control variable. 
Second, we  analyze the case of   an \emph{uncertainty-averse} producer  who  considers a set  of possible future tax scenarios but  does not postulate a probabilistic model for the tax evolution. Instead she determines her  production and investment policy as  equilibrium strategy of a  game with a malevolent opponent. The objective of the producer remains that of maximizing expected profits, whereas the opponent chooses a tax process from the set of scenarios  to minimize the  profits of the producer. 
In both cases, tax risk and tax uncertainty,  we carry out a precise mathematical analysis of the producer's optimization problem. For the tax risk case we  characterize the value function  as unique {viscosity solution} of the  associated HJB equation, using general results from \citet{pham1998optimal}.  Moreover  we give conditions for the existence of classical solutions. For the tax uncertainty setup we end up with a differential game for which we establish existence of an equilibrium and we characterise the value of the game in terms of a classical solution of the Bellman--Isaacs equation.   Since explicit solutions to these equations % HJB  equation 
exist only in very special situations, we conduct numerical experiments to analyze the investment behaviour of the producer. % to study the impact of randomness in taxes, of market structure, and of emission abatement technologies   on   the investment strategy of the producer. 

In our numerical experiments within the paradigm of tax risk we consider two models for the tax dynamics: in the first model, the government may raise taxes at some random future time point, for instance to comply with international climate treaties; in the second model high taxes might be  reversed, for instance since a government with a ``brown'' policy agenda replaces a ``green'' one. The latter situation is somewhat  similar, in spirit, to the framework of \cite{bib:fuss-et-al-08}. We  compare the investment decisions of the producer to a benchmark case where taxes are deterministic.
Our experiments show  that under tax risk the firm is typically less willing to invest into  abatement technologies than in the corresponding benchmark  scenario, which supports the intuition that randomness in carbon taxes may be detrimental for climate policy.   However, there are some new interesting twists. To begin with,  in our setup the producer invests already before a tax increase is actually implemented in order to hedge against high future tax payments. This hedging behavior is not observed in real options models such as \cite{bib:fuss-et-al-08}, where randomness in tax policy induces the producer to delay investment and wait until the government decision. 
In addition, we study the implications  of introducing  an emission-independent tax rebate and we find that rebates enhance the investment into abatement technology. Finally,  our experiments show  that investor expectations are crucial determinants  for the success of a carbon tax policy. In particular, a tax policy  that is not credible (i.e.~producers  are  not convinced that an announced tax increase will actually be implemented or they expect  that a high tax regime will be reversed soon) is substantially less effective than a credible policy.

%Tax rebates are aimed at rewarding producers who undertake their production with low emissions compared to the industry average, and nowadays are commonly proposed in carbon tax initiatives. 

We go on and study the optimal investment within the stochastic differential game for the uncertainty averse producer. 
Interestingly, in that case the results are reversed, that is, more uncertainty is beneficial from a societal point of view, as it leads to higher investment in carbon abatement technology. Moreover,  a rebate now generally reduces investment. These are interesting new results which show that the paradigm used to model the decision making process of the producer is crucial for the impact of climate policy uncertainty on investment into abatement technology.

%In particular,  the uncertainty inverse producer  invests more than under a benchmark case where future taxes are known,  so that an increase in uncertainty is beneficial from a societal point of view.  The reason for this that without rebate  the equilibrium tax rate under uncertainty is higher than the benchmark tax which stipulates more investment. Moreover, other than in the tax risk case a rebate generally reduces investment. These are interesting new results which show that the paradigm used to model the decision making process of the producer is crucial for the impact of climate policy uncertainty on investment into into abatement technology. }

The remainder of the paper is organized as follows:  In Section~\ref{sec:literature} we discuss  some of the related literature; in Section~\ref{sec:optim-problem} we introduce the setup and the optimization problem of the electricity producer; in  Section~\ref{sec:examples} we discuss specific examples for the electricity production and emission abatement technology;  Section~\ref{sec:mathematics} is concerned with the  control problem of the producer under tax  risk, whereas the stochastic differential game related to  tax policy uncertainty is studied in Section \ref{sec:game}; in Section~\ref{sec:numerics} we present  the results from numerical experiments  and discuss their economic implications; Section~\ref{sec:conclusion} concludes.

%Next we discuss in more detail the relation of our work to the papers by  \cite{bib:yang-et-al-07} and \cite{bib:fuss-et-al-08}. Mention the following points:
%Continuous adjustment of investment and transaction cost  versus real options approach; taxes versus carbon prices, methodological: analytic characterization of value function versus discrete time dynamic programming, impact of market structure,  switching technology.

\subsection{Literature review}\label{sec:literature}
We continue with a brief discussion of related contributions.  Within the framework of % We begin with the 
stochastic control literature on optimal tax- and carbon pricing schemes, \citet{bib:aid-biagini-23} study an optimal dynamic carbon emission regulation for a set of firms, in presence of a regulator who may choose dynamically the emission allowances to each firm. The problem is formulated as a Stackelberg game between the regulator and the firms in a jump diffusion setup with linear quadratic costs. This formulation  allows for a closed-form expression of the optimal dynamic allocation policies. % which lead to reduction of emissions. 
\citet{bib:aid-kemper-touzi-23} investigate 
the optimal regulatory incentives that trigger the development of green electricity production in a monopoly and in a duopoly setup. The regulator wishes to encourage green investments to limit carbon emissions, while simultaneously reducing the intermittency of the total energy production. Their main results is a characterization of the regulatory contract that naturally includes interesting agreements like rebate. \citet{bib:carmona-et-al-21} analyse mean field control and mean field game models of electricity producers who can decide on the composition of their energy mix (brown or green)  in the presence of a carbon tax. The producers have to balance the cost of intermittency and the amount of carbon tax they pay.  Initially  producers choose their investment in green production technology;  given this choice  they continuously  adjust their usage of fossil fuels and  hence emissions to minimize a given cost function. 
The paper analyses competitive (Nash equilibrium) and cooperative (social optimum) solutions to this problem  via  systems of forward-backward SDEs. It also includes a study of a  Stackelberg game between a regulator who sets the carbon tax rate at the initial date and the mean field of producers.

\citet{bib:tankov-lavigne-23} and \citet{bib:dimitrescu-leutscher-tankov-24} have done  interesting theoretical research  on the implication of randomness in climate policy more generally.   \cite{bib:tankov-lavigne-23} consider a mean-field game model for a large financial market where firms determine their dynamic emission strategies under climate transition risk in the presence of  green and neutral investors. They show among others that uncertainty about future climate policies leads to overall higher emissions in equilibrium. In a similar spirit, \cite{bib:dimitrescu-leutscher-tankov-24} study the impact of transition scenario uncertainty on the pace of decarbonization and on output  prices in  the electricity industry.   Empirical studies on the impact of carbon taxes to production and investment in green technologies include \citet{aghion2016carbon} who studied in particular the effects of taxes and fuel prices on investment in  technological innovation using data for the automobile sector, and \citet{martinsson2024effect} where data on CO2 emissions from Swedish manufacturing sector is used to estimate the impact of carbon pricing on firm-level emission intensities. These studies provide a support that taxes and high prices trigger investment in low emission technology.

\section{The optimization problem of the electricity producer}\label{sec:optim-problem}

Throughout the paper  we fix a horizon date $T$ and a probability space $(\Omega, \F, \P)$ with  filtration $\bF =(\mathcal{F}_t)_{0\le t \le T}$ representing the information flow. In the sequel all processes are assumed to be $\bF$-adapted and expectations are taken with respect to the probability measure  $\P$.

We consider  a profit maximizing   electricity  producer  who has to pay carbon taxes and who may invest in technology for the abatement of $\CO$ emissions.  We denote by $\tau_t$ the  taxes  per unit of emission and by $X_t$ the value of the investment in abatement technology  at time $t$.  The producer chooses the amount  of electricity to be produced at every point in time and she  controls the  process $X = (X_t)_{0\le t \le T}$  by  her investments into abatement technology.  

The price  of electricity and  the production cost %and the tax process $\btau =(\tau_t)_{0 \le t \le T}$  
may be modulated by an exogenous $d$-dimensional factor process $Y = (Y_t)_{0 \le t \le T}$.  We assume that $Y$ follows a $d$-dimensional diffusion,
\begin{align}\label{equation:Y}
\ud Y_t= \beta (t, Y_t) \ \ud t + \alpha(t, Y_t) \ \ud B_t, \quad Y_0=y\in \mathbb{R},
\end{align}
where $B$ is a $d$-dimensional Brownian motion, and where  the drift $\beta (t, y) \in \mathbb R^d$  and the dispersion $\alpha(t, y)\in \mathbb R^{d \times d}$, for $(t, y)\in [0,T]\times \mathbb{R}^d$, satisfy standard conditions for existence and uniqueness of the SDE \eqref{equation:Y}. Moreover, we denote the generator of $Y$ by $\mathcal{L}^Y$, which reads as 
\[
\mathcal L^Y f(y)=\sum_{i=1}^d  f_{y_i} (y) \beta_i (t,y) + \frac{1}{2} \sum_{i,j=1}^d  f_{y_i y_j} (y) \mathfrak{S}_{ij}(t, y), 
\]
where $\mathfrak{S}(t,y)=\alpha(t,y) \cdot \alpha^{\top}(t,y)$.

\subsection{Instantaneous electricity production} 

We assume that the electricity  market is perfectly competitive so that the  producer acts as a price taker, that is she takes the price $p_t=p(Y_t)$ of one unit of electricity as given and adjusts the  quantity produced in order to maximize instantaneous profits (In the numeric examples we also  consider the  case where the quantity to be produced is  fixed.). This situation might arise in the context of a merit order system, where the electricity spot price is determined by the short run marginal production cost of the power plant that is on the margin of the electricity production system.
For a given investment value $x$,  tax rate $\tau$ and value $y$ of the factor process, we denote the cost of producing $q$ units of electricity by $C(q,x,y,\tau)$. Hence the instantaneous profit %for an electricity output $q$  %at time $t$ 
is given by
\begin{align}\label{eq:pi}
 \Pi(q, x, y, \tau)&=p(y)q - C(q, x, y, \tau)+\nu_0(q) \tau\,.
\end{align}
%Here $C(q,x,y,\tau)$ gives the cost of producing $q$ units of electricity for a given investment value $x$,  tax rate $\tau$ and value $y$ of the factor process.  
The term  $\nu_0(q) \tau$ models  a \emph{tax rebate} that depends on the amount $q$ of energy produced and on the current tax rate, but not on the actual emissions of the producer.  Tax rebates of this form  penalize  (reward) producers with high (low) emissions compared to the industry average and are part of many proposals for carbon taxes. 
The producer  chooses the  production to maximize her instantaneous profit and we denote the  maximal  profit by
\begin{equation} \label{eq:pi_star}
\Pi^*(x, y, \tau)=\max_{q \in [0,q^{\max}]}\Pi(q, x, y, \tau),
\end{equation}
where the constant $q^{\max} >0$ denotes the maximum   capacity of the production technology. 

The  next assumption gives conditions ensuring that the function $\Pi^*$  is well-defined and enjoys certain regularity properties.  
\begin{ass}\label{ass:cost_regularity}\hfill
\begin{itemize} 
\item[(i)] There are  functions $C_0, C_1 \colon [0,q^{\max}] \times \R \times \R^d \to [0, \infty)$ such that  
%\begin{equation}\label{eq:cost_function}
   $$ C(q, x, y, \tau)=C_0(q, x, y)+C_1(q, x, y) \tau\,;$$ 
%\end{equation}
 moreover,  $C_0$ and $C_1$ are   increasing,  strictly convex and $\mathcal{C}^1$ in $q$ and $C_1$ is bounded. \item[(ii)] $C_0$ and $C_1$ are Lipschitz continuous in $x,y$ uniformly in $q \in[0,q^{\max}]$.
\item[(iii)] The function $\nu_0$ is differentiable, increasing and concave on $[0, q^{\max}]$
\item[(iv)] The function $p \colon \R^d \to \R$ is Lipschitz continuous in $y$. 
 \end{itemize}  
\end{ass}
In economic terms the function $C_1$ measures  the emissions from producing $q$ units of electricity given the investment level $x$ and the value of the factor process; these are then multiplied with $\tau$  to give the instantaneous  carbon tax payments; the function $C_0$ gives the emission-independent production cost. Specific examples  are discussed in Section~\ref{sec:examples} below.

Under Assumption~\ref{ass:cost_regularity}(i) and (iii) there is a unique optimal  instantaneous energy output $q^* \in[0,q^{\max}]$ for every $(x,y,\tau)$. Taking derivatives with respect to $q$ gives  the first order condition
\begin{equation}\label{eq:FOC-q}
p(y) - \partial_q C_0(q,x,y) - (\partial_q C_1(q,x,y)  -\partial_q \nu_0(q) )\tau =0, 
\end{equation}
which has at most one solution  due to the strict convexity of the cost functions. If we consider moreover the boundary cases $q=0$ and $q= q^{\max}$ we  get
\begin{equation} \label{eq:cases}
    q^* = 
    \begin{cases} 
    0\,, &\text{if } p(y) - \partial_q C_0(0,x,y) - (\partial_q C_1(0,x,y)  -\partial_q \nu_0(0) )\tau < 0;\\
    q^{\max}, &\text{if } p(y) - \partial_q C_0(q^{\max},x,y) - (\partial_q C_1(q^{\max},x,y)  -\partial_q \nu_0(q^{\max}) )\tau  > 0;\\
    \text{the solution of \eqref{eq:FOC-q}},&\text{else.}
    \end{cases}
\end{equation}

\subsection{Optimal investment problem} 

We assume that  the investment value $X$ has dynamics
\begin{equation}\label{eq:stochastic_investment}
X_t=X_0+ \int_0^t \gamma_s \ud s - \int_0^t \delta X_s \ud s + \sigma W_t, \quad t \le T.  \end{equation}
Here $W=(W_t)_{t \ge 0}$ is a Brownian motion, independent of the $d$-dimensional Brownian motoin $B$, $0\le \delta<1$ is  the depreciation rate and the  term $\sigma W_t$ models exogenous fluctuations of the  investment value, due, for example, to random replacement costs.
The process  $\bgamma =(\gamma_t)_{0 \le t \le T}$ represents  the rate at which the producer invests into  abatement technology. We  assume that the investment is \emph{irreversible}, that is  we introduce  the constraint that $\gamma_t \ge 0$ for all $t$. 
Moreover, we assume that the investment  is subject to proportional \emph{transaction} costs  given by $\kappa \gamma^2$. These costs penalize a rapid build-up of abatement technology. By $\mathcal{A}$ we denote the set of all \emph{admissible} investment strategies, that is the set of  all adapted nonnegative càdlàg processes $\bgamma$ with $\mathbb E\left[\int_0^T \gamma_t \,\ud t \right]< \infty$.

\begin{comment} 
Note that the dynamics \eqref{eq:stochastic_investment} correspond to a situation where investment is \emph{divisible}. This differs from the  real options models of \cite{bib:fuss-et-al-08} or \cite{bib:yang-et-al-07},  where the investment into abatement technology  takes the form of a large lump sum  payment at a stopping time. We offer the following two arguments is support of our  approach.  First, in many situations  investment really is divisible (can be split into very small amounts). As an example think of the installation of new solar panels on an existing solar field. Second, lump sum effects can  also  arise under  the dynamics \eqref{eq:stochastic_investment}, if we assume  that green investments become productive only once the cumulative investment value crosses a certain level, for instance since preparatory work is necessary. For a specific example we refer to the discussion on the function $P_g$ at the end of  Section \ref{subsec:two_technology}. %  \ref{rem:divisible}. \marginpar{keep?} %As an example think of the cost to acquire land or to connect a solar field  to the grid. 
\end{comment}

The producer uses a cash account  $D$ to finance her investments and to invest the profits from selling electricity.   We assume that there is a constant interest rate $r\ge0$ that applies to borrowing and lending. Hence $D$ has the dynamics
\begin{equation}\label{eq:cash}
\ud D_t= \big( r D_t + \Pi^*(X_t, \tau_t, Y_t) - (\gamma_t +\kappa \gamma_t^2)\big ) \ud t, \quad D_0=0.
\end{equation}
We interpret the horizon date $T$ as lifetime of the electricity production technology, and we  model the residual value of the investment by a function  $h(X_T)$, which is nonnegative, increasing and continuous and whose form will depend on the type of abatement technology.

The goal of the electricity producer is to maximize $ \esp{e^{-r(T)} (D_T + h(X_T))}$,  the expected discounted value of her terminal cash position and of the residual investment. 
Next we  show that the initial value of the cash account does not affect the investment decision of the producer.  In fact, 
using \eqref{eq:cash} we get that 
\[
e^{-r(T-t)} D_T = D_t - \int_t^T r e^{-r(s-t)}D_s \ud s + \int_t^T e^{-r(s-t)} (r D_s + \Pi^*(X_s, Y_s, \tau_s)-\gamma_s - \kappa \gamma_s^2) \ud s.
\]
Hence 
\begin{equation} \label{eq:optim-prob-producer}
\esp{e^{-r(T)} (D_T + h(X_T))} = D_0 + \esp{\int_0^T \!\! e^{-rs} \left( \Pi^*(X_s, \tau_s, Y_s)-\gamma_s-\kappa \gamma_s^2\right) \ud s +    e^{- r T} h(X_T)}, 
\end{equation}
and the goal of the producer amounts  to maximizing  the second term in \eqref{eq:optim-prob-producer}.   

In this paper we study the optimization problem of the producer in two  settings that differ with respect to the modelling of randomness in carbon taxes. In Section~\ref{sec:mathematics}  we  analyze the  case of tax risk, where  the tax process $\btau$ follows a precise probabilistic model, namely a pure jump process with given jump intensity and jump size distribution. In Section~\ref{sec:game}  we consider an alternative  approach  corresponding to the case of tax uncertainty. There 
the electricity producer considers a set of possible future tax scenarios and uses a worst case approach based on stochastic differential games to determine her investment strategy.

\section{Production technologies: examples }\label{sec:examples}

We now discuss two specific examples  for the production and emission abatement  technology that will be used in the numerical experiments.

\subsection{The Filter Technology}\label{subsec:filter_technology}
In this example we assume that the producer is using a brown technology such as coal fired power plants but is able  to reduce $\CO$ pollution  by investing in a carbon capture and storage or filter technology.
We let $\zeta$ be the input good, i.e. the amount of raw material  (coal or gas) which is needed to produce electricity. %a certain good.
We suppose that one unit of raw material has a cost of $\bar{c}(y)$ dollars and that for each unit of raw material used in the production process, the amount of emitted  $\CO$  is $e_0$. If filters are installed,  emissions per unit of raw material are reduced by $e(x)$. The emission reduction depends clearly on the quality and the number of filters, and hence on the investment level $x$. 
Given an investment level $x$, total emissions for $\zeta$ units of raw material are thus given by $\zeta(e_0-e(x))$. 

We denote by $P(\zeta)$ the amount of electricity that can be produced using $\zeta$ units of raw material, for a continuous increasing and concave function $P$ with $P(0)=0$.
Denote by $Q(\cdot)$ the inverse function of $P$. Then, to produce the amount $q$ of electricity the producer needs $\zeta=Q(q)$ units of raw material and hence the incurred cost (production cost and taxes) is given by
\begin{align}\label{eq:cost_funct_2tech}
C(q,x, y, \tau)=Q(q)(\bar{c}(y)+(e_0-e(x))\tau).
\end{align}
Note that in this example  the functions $C_0$ and $C_1$ from Assumption~\ref{ass:cost_regularity} are given by 
$C_0(q,x,y)  =Q(q)\bar{c}(y)$ and $C_1(q,x,y) = Q(q)(e_0-e(x))$. 
Recall that we interpret the horizon date $T$ as lifetime of the brown power plant. It makes sense to assume that the residual value of  the filters installed  is zero once the power plant is no longer in operation, so that for the filter technology we take $h(X_T) =0$.

\subsection{Two technologies}\label{subsec:two_technology} 

Next we consider a situation where the energy producer has the option to replace a brown technology such as  coal or gas power plants by a   green technology such as wind, or solar energy. We denote by $\zeta_b$ be the amount of input material for  the brown technology and suppose that one  unit of input material costs $c_b(y)$ dollars and leads to  $e_b$ tons of $\CO$.  Let $P_b(\zeta)$ be the amount of electricity that can be produced with $\zeta$ units of raw material and assume that $P_b$ is increasing and concave and $P_b(0)=0$.

The input material to produce green, on the other hand, has a prize of zero (for instance  wind or sun) and for simplicity we assume that green technology does not emit $\CO$.%\footnote{This assumption is not very restrictive. We can always rescale/translate the emission amount to fall in this setting.}.
%The production function for the green technology is
%\begin{align}
%F_g(x)=\begin{pmatrix} Q_g(x)\\ 0\end{pmatrix}.
%\end{align}
We associate the investment level $x$ with the amount of green production facilities (solar panels or wind turbines) installed and we denote by  $P_g(x)$ be the maximum amount of electricity that can be produced with the green technology for a given investment level $x$.
We assume that the maintenance cost $c_g(x)$ of the green technology  only depends of the investment $x$. Denote by $Q_b(\cdot)$ the inverse function of $P_b$.Then the total cost for producing $q$ units of energy is
\begin{equation}
C(q,x, y, \tau)=
\begin{cases}
    c_g(x) \qquad &\mbox{ if } \ q-P_g(x)\le 0\,;\\
    c_g(x) + (c_b(y)+e_b\tau)Q_b\left(q-P_g(x)\right) \qquad &\mbox{ if } \ q-P_g(x)> 0\,;
\end{cases}
\end{equation}
equivalently, $C(q,x, y, \tau)=c_g(x) + (c_b(y)+e_b\tau) Q_b\left(\left(q-P_g(x)\right)^+\right)$. 
In this example we get that 
\begin{align}
C_0(x, y, q)&= c_g(x) + c_b(y) Q_b\left(\left(q-P_g(x)\right)^+\right)\\
C_1(x, y, q) &= e_b Q_b\left(\left(q-P_g(x)\right)^+\right)
\end{align}
which satisfy the regularity conditions stated in Assumption \ref{ass:cost_regularity} (i)--(iii) if $c_g$ and  $P_g$ are Lipschitz
$Q_b$ is $\mathcal{C}^1$, increasing, strictly convex and $(Q_b)'(0)=0$, with the exception that $C_0$ is strictly convex in $q$ only for $q \ge P_g(x)$. This is however sufficient for the existence of a unique optimal electricity output $q^*$ which is given by \eqref{eq:cases}. In the numerical experiments   we take $Q_b(q)=a q^{3/2}$, which fits the conditions above.   
We will also assume that $c_g$ and  $P_g$ are increasing in the investment level $x$ to make the  model reasonable from an economic viewpoint.  
In the  numerical experiments we use a function $P_g$ of the following form
$$P_g(x)=p_g [(x-\bar x)^+]^\alpha, \quad \alpha \in (0,1),$$
for some productivity parameter $p_g \in (0,1)$, where 
$\bar x$ represents initial expenses such as land acquisition and infrastructure development for connecting to the grid that the electricity company must bear  when building a green power plant. 
This example shows that our model may account for threshold effects, even when the investment occurs continuously in time. 
%The profit function in this case becomes
%\begin{equation}
%\Pi(q,x,\tau, y)= A(y) (q^{\max} -q)^{\eta}q - C_0(x) - (c_b(y)+e_b\tau) Q_b^{-1}\left(\left(q-Q_g(x)\right)^+\right).
%\end{equation}
To simplify the exposition we concentrate on  the case where the maximum production level $q^{\max}$ is independent of $x$. In the  two-technology example this bound can be interpreted as the maximum amount of electricity  that could be absorbed by the grid. However, it might also make sense to consider the case where $q^{\max}$  depends on the amount of green technology installed and  hence on the investment level, that is, to model the maximum capacity as a  function $\bar{q}(x)$.   This choice brings additional technicalities in some of the theoretical results. We refer, to the comment on maximum capacity expansion in Appendix \ref{appendix:extension} for further discussion.

\section{Tax risk and stochastic control}\label{sec:mathematics}

In this section we  analyze the  case of tax risk, where the tax process $\btau$ follows a precise probabilistic model, namely a Markovian pure jump process with given jump intensity and jump size distribution. 
From now on we use the notation $\espt{\cdot}$ to indicate the conditional expectation given $X_t=x,  Y_t=y,\tau_t=\tau$. The reward function of the optimization problem \eqref{eq:optim-prob-producer} is thus given by 
\begin{equation}\label{eq:Unb_optimization}
  J(t,x,y,\tau,\bgamma) =  \espt{\int_0^T \!\!\! e^{-r(s-t)} \left( \Pi^*(X_s, \tau_s, Y_s)-\gamma_s-\kappa \gamma_s^2\right) \ud s +    e^{- r (T-t)} h(X_T) },
\end{equation}
and we denote by 
%\begin{equation} \label{eq:value-function}
$V(t, x, y, \tau) = \sup\{J(t,x,y,\tau, \bgamma)\colon \bgamma \in \mathcal{A} \} 
$ 
%\end{equation}
the  corresponding value function.  The main goal of this section is to characterize   $V$   as viscosity solution of a certain Hamilton-Jacobi-Bellman (HJB) equation and to give criteria for the existence of a classical solution.

\subsection{The tax process}\label{sec:tax} 

We begin by introducing the dynamics of the tax process. Let $N(\ud t, \ud z)$ be a homogeneous Poisson random measure with intensity measure $m(\ud z)\ud t$, where $m(\ud z)$ is a finite measure on a compact set $\mathcal{Z}\subset \mathbb{R}$, i.e. $m(\mathcal{Z})=M<\infty$. Then we introduce the dynamics of the tax process as follows: 
\begin{equation}\label{eq:tau}
\tau_t = \tau_0 + \int_0^t\int_{\mathcal{Z}} \Gamma(t,Y_{t-},\tau_{t-},z) \ N(\ud t, \ud z), \quad t \in [0,T]
\end{equation}
for a function $\Gamma(t, y, \tau, z):[0,T]\times \mathbb{R}^d\times \mathbb{R}\times {\mathcal{Z}}\to \mathbb{R}$  such that equation \eqref{eq:tau} has a unique solution which is then automatically a Markov process. (A set of sufficient conditions is listed in Assumption \ref{ass:Lipschitz}-(iii)).  In the sequel, to avoid technicalities we assume that there exists $\tau^{\max}>0$ such that $\tau_t \in [0, \tau^{\max}]$ for all $t \in [0,T]$.  This translates into the following conditions on the function $\Gamma$: $\sup_{z \in {\mathcal{Z}}} \Gamma(t,y,\tau,z) \le \tau_{\max}-\tau$ and $\inf_{z \in {\mathcal{Z}}} \Gamma(t,y,\tau,z) \ge -\tau$, for all $t \in [0,T]$, $y \in \mathbb{R}^d$, $\tau \in [0,\tau^{\max}]$.

%We assume that the process $\btau$ is a Markov chain that takes $K$ possible values  $\{0 \le \tau_1 < \tau_2\dots < \tau_K\}$.  The switching intensity matrix is Markov modulated and given by $G(t, y)=(g_{ij}(t, y))_{i,j\in \{1, \dots, K\}}$, where  $g_{ii}(t,y)=-\sum_{j \neq i} g_{ij}(t, y)$, $1\leq i\leq K$. 
We denote by $\mathcal{L}^{\btau}$ the generator of $\btau$, for given $t$  and value of the factor process $y$, that is 
\begin{equation}\label{eq:generator_tau}
\mathcal{L}^{\btau}f(t,\tau,y) = \int_{\mathcal{Z}} \Big(f(t, y, \tau + \Gamma(t,y,\tau, z))- f(t,x,y)\Big) m(\ud z) %  \sum_{k=1}^K \I_{\{\tau=\tau_k\}} \sum_{i=1}^K g_{ki}(y) f(t, x,\tau_i,y)
\end{equation}
\begin{remark}
A simple example for the tax process $\btau$ is given by a two state Markov chain with states $\tau^1 < \tau^2$ and switching intensity matrix $G=(g_{ij})_{i,j\in\{1,2\}}$, where $g_{ii}=-g_{ij}$, $j\neq i$. For this example the generator of the tax process reduces to 
$\mathcal L^{\btau}f(\tau)=\sum_{j=1}^2 \I_{\{\tau=\tau^j\}} \sum_{i=1}^2 g_{ji} f(\tau^i)$. 
In this case, $\tau^1$ and $\tau^2$ represent low taxes and high taxes, respectively. A low tax regime ($\tau_t= \tau^1$) might correspond to a the decision of a government putting little emphasis on environmental policy, a high tax regime  to a government with a green policy agenda. This example will be discussed in detail in the numerical analysis. 
There are many ways to represent  a two state Markov chain as a pure jump process of the form \eqref{eq:tau}. For a specific construction let $\mathcal{Z} = \{0,1\}$,   $m^\tau(\ud z) = g_{12}\, \delta_{\{0\}}(\ud z)+g_{21}\, \delta_{\{1\}}(\ud z)$ and put 
$$
\Gamma(\tau,0) = [\tau^2-\tau]^+ - [\tau^1-\tau]^+ \text{ and } \Gamma(\tau,1) =  [\tau- \tau^2]^+ -  [\tau-\tau^1]^+ \,. 
$$
Note that the function $\Gamma$ is bounded and Lipschitz in $\tau$.   
\end{remark}

\subsection{Properties of the value function}\label{sec:preliminaries}

We need the following set of conditions for our analysis.  

\begin{ass}\label{ass:Lipschitz}
  \begin{itemize}
%  \item[(i)] $\Pi^*(x, y,\tau)$ is Lipschitz continuous in $(x, y,\tau)$.  
  %$\sup\{|\Pi^*( x^1, y,\tau)-\Pi^*( x^2, y,\tau)|, \tau\in [0,\tau_K], y \in \RequireXeTeX\}\le %L_{\Pi^*}|x^1-x^2|$. %, and Lipschitz in $t$, uniformly in $\tau$, $\sup\{|\Pi*(t_1, x, \tau)-%\Pi*(t_2, x, \tau)|\tau\in R^\tau\}\le L^0_{\Pi^*}(x) |t_1-t_2| $.
 \item[(i)] Function $h(x)$ is  Lipschitz in $x$.
 \item[(ii)] Functions $\beta, \alpha$ are continuous and globally Lipschitz. 
 \item[(iii)] Function  $\Gamma(t,y, \tau, z)$ 
  is  continuous in $t,y,\tau, z$, Lipschitz in $y, \tau$, for all $t \in [0,T]$, and for all $z \in {\mathcal{Z}}$ and satisfies $|\Gamma(t, y, \tau, z)|\le c (1+\|y\|)$. 
  \end{itemize}
\end{ass}

\begin{lemma} \label{lem:lipschitz}
Let $V$ be the value function of the problem \eqref{eq:b_optimization}.
Suppose that Assumption~\ref{ass:cost_regularity} and Assumption \ref{ass:Lipschitz} hold. Then 
\begin{itemize}
\item[(i)]$\Pi^*$ is Lipschitz continuous in $(x, y, \tau)$;
\item[(ii)] $V$ is Lipschitz in $x$, uniformly in $t, \tau, y$, with Lipschitz constant
\begin{equation}\label{eq:def_Lv}
L_V=\frac{L_{\Pi^*}(1-e^{-(r+\delta) T)})}{r+\delta}+  L_h;
\end{equation}
\item[(iii)] Suppose moreover that $\Pi^*$  and $h$ are increasing in $x$, then  $V $ is increasing in $x$ as-well.
\end{itemize}
\end{lemma}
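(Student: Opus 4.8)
\emph{Plan.} The plan is to establish the three assertions in the order stated, since (ii) rests on (i) and (iii) recycles the pathwise comparison built in (ii). Throughout, fix $t,y,\tau$ and write $J(t,x;\bgamma)=J(t,x,y,\tau,\bgamma)$ and $V(t,x)=V(t,x,y,\tau)$.

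For (i) I would invoke the elementary fact that a supremum of functions sharing a common Lipschitz constant is again Lipschitz with that constant, $|\sup_q f(q,w)-\sup_q f(q,w')|\le \sup_q|f(q,w)-f(q,w')|$. It therefore suffices to check that the integrand $\Pi(q,\cdot,\cdot,\cdot)$ in \eqref{eq:pi} is Lipschitz in $(x,y,\tau)$ with a constant independent of $q\in[0,q^{\max}]$. In $x$ the dependence enters only through $-C_0(q,x,y)-C_1(q,x,y)\tau$, which is uniformly Lipschitz since $C_0,C_1$ are (Assumption~\ref{ass:cost_regularity}(ii)) and $\tau\le\tau^{\max}$. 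In $y$ one adds $p(y)q$, Lipschitz with constant $q^{\max}L_p$ because $q\le q^{\max}$ and $p$ is Lipschitz (Assumption~\ref{ass:cost_regularity}(iv)). In $\tau$ the dependence is through $(\nu_0(q)-C_1(q,x,y))\tau$, whose slope is bounded uniformly in $q$ since $C_1$ is bounded (Assumption~\ref{ass:cost_regularity}(i)) and $\nu_0$ is continuous on the compact interval $[0,q^{\max}]$. Combining the three constants into a single $L_{\Pi^*}$ and applying the sup bound gives (i).

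For (ii) the key device is a pathwise coupling. Given two initial levels $x,x'$ and an arbitrary $\bgamma\in\A$, run \eqref{eq:stochastic_investment} from time $t$ with the \emph{same} control and the \emph{same} Brownian path $W$, producing $X^x$ and $X^{x'}$. Then $\Delta_s:=X^x_s-X^{x'}_s$ satisfies $\Delta_s=(x-x')-\int_t^s\delta\Delta_u\,\ud u$, because the $\gamma$- and $\sigma W$-terms cancel; this deterministic linear ODE yields $\Delta_s=(x-x')e^{-\delta(s-t)}$. Since $Y$ and $\btau$ are exogenous and the running cost $\gamma_s+\kappa\gamma_s^2$ is identical for both, subtracting the functionals leaves only the profit and terminal terms,
\[
J(t,x;\bgamma)-J(t,x';\bgamma)=\espt{\int_t^T e^{-r(s-t)}\big(\Pi^*(X^x_s,\tau_s,Y_s)-\Pi^*(X^{x'}_s,\tau_s,Y_s)\big)\,\ud s+e^{-r(T-t)}\big(h(X^x_T)-h(X^{x'}_T)\big)}.
\]
Applying (i) and the Lipschitz property of $h$ pathwise bounds this by $|x-x'|\big(L_{\Pi^*}\int_t^T e^{-(r+\delta)(s-t)}\,\ud s+L_h e^{-(r+\delta)(T-t)}\big)$; evaluating the integral as $\tfrac{1-e^{-(r+\delta)(T-t)}}{r+\delta}$ (read as $T-t$ when $r+\delta=0$) and using $T-t\le T$ to majorise both terms gives the bound $L_V|x-x'|$ with $L_V$ as in \eqref{eq:def_Lv}, uniformly in $t,y,\tau$ and in $\bgamma$.

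It remains to pass from $J$ to $V$ and to prove (iii). Since $\A$ does not depend on $x$, for $\varepsilon>0$ I would pick $\bgamma^\varepsilon$ that is $\varepsilon$-optimal at $x$, so that $V(t,x)-V(t,x')\le J(t,x;\bgamma^\varepsilon)-J(t,x';\bgamma^\varepsilon)+\varepsilon\le L_V|x-x'|+\varepsilon$; by symmetry and $\varepsilon\downarrow0$ this yields the uniform Lipschitz estimate of (ii). For (iii), if $x\le x'$ the same coupling gives $X^{x'}_s-X^x_s=(x'-x)e^{-\delta(s-t)}\ge0$ for all $s$, whence monotonicity of $\Pi^*$ and $h$ in $x$ forces $J(t,x';\bgamma)\ge J(t,x;\bgamma)$ for every $\bgamma\in\A$, and taking the supremum proves $V$ increasing in $x$. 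The only genuinely delicate point is the coupling in (ii): recognising that identical controls and identical noise driving the linear dynamics \eqref{eq:stochastic_investment} make $X^x_s-X^{x'}_s$ purely deterministic with decay $e^{-\delta(s-t)}$ is precisely what renders the estimate pathwise, cancels the control costs, and produces the sharp constant $L_V$; everything else is routine once (i) is available.
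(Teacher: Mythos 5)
Your proposal is correct and follows essentially the same route as the paper: the sup-difference bound for (i), the pathwise coupling with identical control and noise giving $X^1_s-X^2_s=(x^1-x^2)e^{-\delta(s-t)}$ for (ii), and monotonicity of the reward functional for (iii). The only (harmless) differences are that you pass from $J$ to $V$ via $\varepsilon$-optimal strategies where the paper directly bounds the difference of suprema by the supremum of differences, and that you spell out the coupling argument for (iii) which the paper leaves implicit.
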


\begin{proof}
We begin with Statement $(i)$. To prove this we will  use Assumption \ref{ass:cost_regularity}. By direct computations we get that
\begin{align}
&\left|\Pi^*(x^1, y^1, \tau^1) - \Pi^*(x^2,  y^2, \tau^2)\right|=\Big|\max_{q \in[0,q^{\max}]}\Pi(x^1, y^1,\tau^1, q)- \max_{q \in[0,q^{\max}]} \Pi(x^2, y^2,\tau^2, q)\Big |\\
&\qquad \leq \max_{q \in[0,q^{\max}]} \left|\Pi(x^1, y^1,\tau^1,  q)-\Pi(x^2, y^2,\tau^2, q)\right|\,.
\end{align}
Moreover, we get from the definition of $\Pi$ that 
\begin{align*}
   &\big | \Pi(x^1, y^1,\tau^1, q)  -\Pi(x^2, y^2,\tau^2, q) \big | \le  |p(y_1) - p(y_2)| q + |C_0(q,x^1,y^1) - C_0(q,x^2,y^2) |\\  & \qquad  + |\tau^1 C_1(q,x^1,y^1) - \tau^2 C_1 (q,x^2,y^2) | +  |\tau^1 -\tau^2| \nu_0(q)\\
  & \le q^{\max} L_p |y^1 -y^2| + L_{C_0}(|y^1 -y^2| +|x^1 -x^2|) + \tau^{\max}  L_{C_1}(|y^1 -y^2| +|x^1 -x^2|) \\ & \qquad + (||C_1||_\infty +||\nu_0||_\infty) |\tau^1-\tau^2|\,,
\end{align*}
so that $\Pi$ is Lipschitz continuous in $(x, y, \tau)$ uniformly in ${q \in[0,q^{\max}]}$. Here $||\cdot||_\infty$ represents the supremum norm. 

Next we establish $(ii)$. By $(i)$ we know that $\Pi^*$ is Lipschitz continuous in $(x, y, \tau)$ with Lipschitz constant $L_{\Pi^*}$. Next we let $X^1$ and $X^2$ be the solutions of equation \eqref{eq:stochastic_investment} with the initial conditions $x^1 \neq x^2$, respectively, that is
\begin{equation}
X^{i}_t= x^i+ \int_0^t \left(\gamma_s  - \delta X_s\right) \ud s + \sigma W_t,
\end{equation}
for $i=1,2$. Then we get that
$X^1_t-X^2_t= (x^1- x^2 ) e^{-\delta t}$ for $ t \ge 0$ and
\begin{align}
&|V(t, x^1,  y, \tau)-V(t, x^2, y, \tau)|\\
& \le \sup_{\bgamma \in \mathcal{A}}\espt{\int_t^T \big |\Pi^*(X^1_s,  Y_s, \tau_s)- \Pi^*(X^2_s, Y_s, \tau_s) \big | e^{-r(s-t)} \ud s +e^{-r(T-t)} \big| h(X^1_T)- h(X^2_T)\big |}\\
&\le \sup_{\bgamma \in \mathcal{A}} \espt{\int_t^T L_{\Pi^*} |X^1_s-X^2_s| e^{-r(s-t)} \ud s + e^{-(r)(T-t)} L_h|X^1_T-X^2_T|}\\
&\le \int_t^T L_{\Pi^*} |x^1-x^2| e^{-(r+\delta)(s-t)} \ud s + e^{-(r+\delta)(T-t)} L_h|x^1-x^2|  \\
&= |x^1-x^2| \left(\frac{L_{\Pi^*}(1-e^{-(r+\delta) (T-t)})}{r+\delta}+ e^{-(r+\delta) (T-t)} L_h\right).
\end{align}
This shows that $V^{\bar \gamma}$ is Lipschitz in $x$, uniformly in $t, \tau, y$ with uniform Lipschitz constant $L_V$.

For $(iii)$ note that  if $h$ and $\Pi^*$ are increasing in $x$, the reward function \eqref{eq:Unb_optimization} of the problem is increasing in $x$, which carries over to the value function $V$ by definition.
\end{proof}

\begin{remark}[Comments and extensions] \label{rem:comments}
 \begin{itemize}
 \item[1.] The argument in the proof of Lemma \ref{lem:lipschitz}-(i) can be used to get regularity of the function $\Pi^*$ even in the case where an investment may expand the maximum capacity, which makes sense for instance in the setup of the example on two technologies. 
 \item[2.] It is possible to show that if $\Pi^*$  and $h$ are concave in $x$, then $V $ is also concave in $x$.   This situation  arises, for instance, if $\Pi(t,x,y,\tau, q)$ is concave in $x$ and if $q^*$ is a fixed quantity. 
\end{itemize}
The mathematical details of these two extensions are discussed in Appendix \ref{appendix:extension}.
\end{remark}

\subsection{Viscosity solutions}
For mathematical reasons, we first assume that the set of admissible controls is bounded and we denote by $\mathcal{A}^{\bar \gamma} \subset \mathcal{A}$ the set of all adapted càdlàg processes $\bgamma$ with $0 \le \gamma_t \le \bar \gamma$ for all $t$. Let
\begin{align} 
V^{\bar \gamma}(t,x, y,\tau):=\sup_{\bgamma \in \mathcal{A}^{\bar \gamma}}\espt{\int_t^T \left( \Pi^*(X_s, Y_s, \tau_s)-\gamma_s-\kappa \gamma_s^2\right)e^{-r(s-t)} \ud s + e^{-r(T-t)} h(X_T)}\,.  \label{eq:b_optimization}
\end{align}
As a first step we show that  $V^{\bar \gamma}$ is the unique viscosity solution of the HJB equation
\begin{equation} \label{eq:HJB-bounded}
\begin{split}
 v_t (t, x, y, \tau)& +\Pi^*(x,y, \tau )  + \mathcal{L}^{\btau} v(t, x, \tau) + \mathcal{L}^Y v(t,x, y,\tau)  +  \frac{\sigma^2}{2}v_{xx}(t, x, y, \tau) \\
  & + \sup_{0\le \gamma\le \bar\gamma} \big \{v_x (t, x,y, \tau) (\gamma - \delta x) - (\gamma + \kappa \gamma^2) \big \} =- r v(t, x,y, \tau) 
\end{split}
\end{equation}
with the terminal condition $v(T,x, y, \tau)= h(x)$. %Moreover, we prove that  the value function $\bar V$ is independent of $\bar \gamma$, provided that $\bar \gamma$ is sufficiently large, and we use this result to characterize the solution of the problem \eqref{eq:Unb_optimization}.

\begin{proposition} \label{prop:viscosity}
 The function $V^{\bar \gamma}$ is Lipschitz in $(x, y)$ and H\"older in $t$ and  the unique viscosity solution of the equation \eqref{eq:HJB-bounded}. Moreover, a comparison principle holds for that equation. 
\end{proposition}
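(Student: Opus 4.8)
The plan is to place the problem within the viscosity-solution framework for controlled jump-diffusions of \citet{pham1998optimal}, checking that our data meet the structural hypotheses required there. The controlled state is the triple $(x,y,\tau)$: the component $x$ carries the controlled drift $\gamma-\delta x$ and the Brownian diffusion $\sigma\,\ud W$, the factor $y$ follows \eqref{equation:Y}, and $\tau$ is driven by the finite-activity random measure $N$; the control $\gamma$ enters only the drift of $X$ and the running reward, and the admissible set $\mathcal{A}^{\bar\gamma}$ is uniformly bounded. I would proceed in four steps: (a) continuity estimates for $V^{\bar\gamma}$; (b) the dynamic programming principle (DPP); (c) the viscosity sub- and supersolution property; (d) a comparison principle, which also yields uniqueness.

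For step (a), Lipschitz continuity in $x$ uniformly in $(t,y,\tau)$ is exactly Lemma~\ref{lem:lipschitz}(ii). Lipschitz continuity in $y$ is obtained by the same coupling scheme: start two copies of the state from $y^1$ and $y^2$ on the same Brownian and Poisson data, use the Gronwall estimate $\mathbb E[\sup_{s\le T}|Y^1_s-Y^2_s|^2]\le c\,|y^1-y^2|^2$ afforded by Assumption~\ref{ass:Lipschitz}(ii), and propagate the deviation to the tax copies through the Lipschitz bound on $\Gamma$ in Assumption~\ref{ass:Lipschitz}(iii) via a further Gronwall argument for $\mathbb E[\,|\tau^1_s-\tau^2_s|\,]$. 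The only genuine subtlety is the coupling from $Y$ to $\tau$ (the jump sizes depend on $Y_{t-}$), which this two-step Gronwall handles. Combined with the Lipschitz continuity of $\Pi^*$ in $(y,\tau)$ from Lemma~\ref{lem:lipschitz}(i) and the Lipschitz property of $h$ in Assumption~\ref{ass:Lipschitz}(i), this gives Lipschitz continuity of $V^{\bar\gamma}$ in $(x,y)$. H\"older-$\tfrac12$ continuity in $t$ then follows from the DPP together with the short-time moment bounds $\mathbb E[\,|X_{t+\eta}-x|^2+|Y_{t+\eta}-y|^2\,]\le c\,\eta$ and the fact that $\tau$ makes at most $N(\mathcal{Z}\times[t,t+\eta])$ jumps, whose expectation is $M\eta$.

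Steps (b) and (c) are routine once (a) holds. The DPP for the bounded-control problem follows from the usual measurable-selection argument, and the compactness of the control set $[0,\bar\gamma]$ guarantees that the Hamiltonian $\sup_{0\le\gamma\le\bar\gamma}\{v_x(\gamma-\delta x)-(\gamma+\kappa\gamma^2)\}$ is finite and continuous in $v_x$, so no difficulties of unbounded controls arise. From the DPP one shows by the standard test-function arguments that $V^{\bar\gamma}$ is both a viscosity subsolution and a supersolution of \eqref{eq:HJB-bounded}; here the nonlocal term $\mathcal{L}^{\btau}$ is evaluated on the smooth test function and causes no trouble, because $m$ is a finite measure on the compact set $\mathcal{Z}$, so $\varphi\mapsto\mathcal{L}^{\btau}\varphi$ is a bounded continuous functional.

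The main obstacle is step (d), the comparison principle for the integro-PDE \eqref{eq:HJB-bounded}. I would prove it by doubling of variables: assuming a subsolution $u$ exceeds a supersolution $w$ at some point, one penalises with $\tfrac{1}{2\varepsilon}\big(|x-x'|^2+|y-y'|^2+|\tau-\tau'|^2\big)$ together with the usual small localising term in $y$ (which is legitimate since the solutions have at most linear growth), and applies the Crandall--Ishii--Jensen lemma to the doubled function at its maximum. The decisive simplification relative to general L\'evy-type equations is that $m(\mathcal{Z})=M<\infty$ and $\Gamma$ is bounded, so the nonlocal operator $\mathcal{L}^{\btau}$ is a \emph{bounded, zero-order} perturbation: there is no singularity of the jump measure at the origin, hence no need to split the integral into small- and large-jump parts or to absorb a singular piece into the second-order terms. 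The nonlocal contributions at the extremal point are estimated directly by testing the maximum inequality at the jumped states $\tau+\Gamma(\cdot)$ and $\tau'+\Gamma(\cdot)$ and using the Lipschitz bound on $\Gamma$ in $(y,\tau)$ to control the resulting increments of the penalisation by $C\varepsilon^{-1}(|y-y'|^2+|\tau-\tau'|^2)$, which vanishes at the limiting point; the Brownian second-order terms in $x$ and $y$ are handled exactly as in the classical local theory. The finite horizon together with $r\ge0$ supplies the parabolic structure needed to close the argument, so that letting $\varepsilon\to0$ produces the required contradiction and yields $u\le w$. Applied to $V^{\bar\gamma}$, which is both sub- and supersolution, this delivers uniqueness.
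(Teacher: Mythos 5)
Your proposal is correct, and its point of departure coincides with the paper's: the paper's entire proof consists of verifying that the problem \eqref{eq:b_optimization} satisfies hypotheses (2.1)--(2.5) of \citet{pham1998optimal} and then invoking Theorem 3.1 of that paper, which delivers the Lipschitz/H\"older regularity, the viscosity characterization and the comparison principle in one stroke (with the same caveat you note, that the compact control set $[0,\bar\gamma]$ is exactly what makes the cited results applicable). Where you differ is that, after announcing this strategy, you do not stop at checking the hypotheses but instead sketch a self-contained proof of each ingredient: the coupling-plus-Gronwall continuity estimates in $(x,y)$ (including the two-step Gronwall needed because the jump sizes of $\btau$ depend on $Y_{t-}$), the H\"older-$\tfrac12$ estimate in $t$ from short-time moment bounds, the DPP, and a doubling-of-variables comparison argument in which you correctly identify the key simplification that $m(\mathcal Z)<\infty$ and the boundedness of $\Gamma$ make $\mathcal L^{\btau}$ a bounded zero-order perturbation, so no small-jump/large-jump splitting is needed. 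This buys transparency and independence from the precise formulation of Pham's theorem (useful if one wanted to perturb the setting, e.g.\ state-dependent $q^{\max}$), at the cost of considerably more work; the paper's citation-based proof is shorter but leaves the reader to confirm that Pham's structural conditions (in particular linear growth and Lipschitz continuity of the coefficients and of the reward, which your step (a) and Lemma~\ref{lem:lipschitz} supply) really do hold. Both routes are sound.
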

\begin{proof}
It is easy to check that for the problem~\eqref{eq:b_optimization} the hypotheses (2.1)-(2.5) of \citet{pham1998optimal} are satisfied. Then, the result follows from  \cite[Theorem 3.1]{pham1998optimal}.
Note  that in  \cite{pham1998optimal}  it is assumed that the controls take values in a compact set, so that the results of that paper apply only to the case where $\bgamma \in \mathcal{A}^{\bar \gamma}$. 
\end{proof}

Next we want to prove that $V^{\bar \gamma}$ is independent of $\bar \gamma$ for sufficiently large values of $\bar \gamma$.
For this we use  that, in view of Lemma \ref{lem:lipschitz}, the value function $V^{\bar \gamma}$ is Lipschitz in $x$ with Lipschitz constant $L_V$ as in equation \eqref{eq:def_Lv}; in particular, the Lipschitz constant may be taken independent of $\bar \gamma$.

\begin{proposition}\label{prop:large_gamma}
Consider constants $\bar \gamma^1 <  \bar \gamma^2$ such that  $\frac{(L_V-1)^+}{2k}<\bar \gamma^1$. Then $V^{\bar \gamma^1}=V^{\bar \gamma^2}$.
%\begin{align}
%&\sup_{\bgamma\in \mathcal{A}^{\bar{\gamma}^1} }\espt{\int_t^T \left( \Pi^*(x_s, \tau_s, Y_s)-\gamma_s-\kappa \gamma_s^2\right)e^{-r(s-t)} \ud s - e^{-r(T-t)} h(I_T)}\\
%&=\sup_{\bgamma\in \mathcal{A}^{\bar{\gamma}^2}}\espt{\int_t^T \left( \Pi^*(I_s, \tau_s, Y_s)-\gamma_s-\kappa \gamma_s^2\right)e^{-r(s-t)} \ud s - e^{-r(T-t)} h(I_T)}
%\end{align}
\end{proposition}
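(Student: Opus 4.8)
The plan is to combine the uniqueness of viscosity solutions from Proposition~\ref{prop:viscosity} with the \emph{uniform} Lipschitz bound from Lemma~\ref{lem:lipschitz}. Since $\mathcal{A}^{\bar\gamma^1}\subset\mathcal{A}^{\bar\gamma^2}$, the inequality $V^{\bar\gamma^1}\le V^{\bar\gamma^2}$ is immediate, so the entire content of the statement is the reverse inequality. I would obtain it by showing that $V^{\bar\gamma^1}$ and $V^{\bar\gamma^2}$ solve \emph{the same} HJB equation, and then invoke uniqueness.

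First I would analyze the control Hamiltonian appearing in \eqref{eq:HJB-bounded}. Writing $p$ for the placeholder of $v_x$, its $\gamma$-dependent part is
\begin{equation*}
H_{\bar\gamma}(p) := \sup_{0\le\gamma\le\bar\gamma}\big\{(p-1)\gamma-\kappa\gamma^2\big\}.
\end{equation*}
The integrand is a concave parabola in $\gamma$ with unconstrained maximizer $\gamma^*(p)=\frac{(p-1)^+}{2\kappa}$. The role of the hypothesis $\frac{(L_V-1)^+}{2\kappa}<\bar\gamma^1$ is that whenever $|p|\le L_V$ one has $\gamma^*(p)\le\frac{(L_V-1)^+}{2\kappa}<\bar\gamma^1\le\bar\gamma^2$, so the upper constraint is inactive and $H_{\bar\gamma^1}(p)=H_{\bar\gamma^2}(p)=\frac{((p-1)^+)^2}{4\kappa}$. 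In other words, the two Hamiltonians coincide on the set $\{|p|\le L_V\}$.

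Next I would transfer this agreement to the viscosity inequalities. By Lemma~\ref{lem:lipschitz}, $V^{\bar\gamma^1}$ is Lipschitz in $x$ with constant $L_V$, uniformly in the remaining variables, and crucially $L_V$ does not depend on $\bar\gamma$. A standard fact about Lipschitz functions is that any smooth test function $\phi$ touching $V^{\bar\gamma^1}$ from above or from below at a contact point $x_0$ satisfies $|\phi_x(x_0)|\le L_V$: the one-sided difference quotients produced by the local extremum of $V^{\bar\gamma^1}-\phi$ force $-L_V\le\phi_x(x_0)\le L_V$. Consequently, the only gradient values $p=\phi_x$ that ever enter the sub- and supersolution inequalities lie in $\{|p|\le L_V\}$, where $H_{\bar\gamma^1}$ and $H_{\bar\gamma^2}$ agree by the previous step. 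Hence the viscosity sub/supersolution inequalities satisfied by $V^{\bar\gamma^1}$ for the $\bar\gamma^1$-equation are literally identical to the corresponding inequalities for the $\bar\gamma^2$-equation, so $V^{\bar\gamma^1}$ is a viscosity solution of the latter. Since $V^{\bar\gamma^2}$ is, by Proposition~\ref{prop:viscosity}, the unique viscosity solution of that equation, the comparison principle yields $V^{\bar\gamma^1}=V^{\bar\gamma^2}$.

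The main obstacle I anticipate is the bookkeeping in this last step: one must argue carefully that only test-function gradients bounded by $L_V$ are ever relevant, which is what reduces the two a priori different equations to one. This hinges entirely on the Lipschitz constant of Lemma~\ref{lem:lipschitz} being independent of $\bar\gamma$; everything else (the explicit maximizer of the parabola and the inclusion of the admissible sets) is elementary.
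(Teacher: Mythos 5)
Your proposal is correct and follows essentially the same route as the paper: both arguments rest on the observation that the $\bar\gamma$-independent Lipschitz constant $L_V$ bounds the test-function gradient in $x$ at any contact point, so the constrained Hamiltonian $\sup_{0\le\gamma\le\bar\gamma}\{(p-1)\gamma-\kappa\gamma^2\}$ is unchanged when $\bar\gamma$ is enlarged beyond $\frac{(L_V-1)^+}{2\kappa}$. The only cosmetic difference is that the paper verifies just the supersolution property of $V^{\bar\gamma^1}$ for the $\bar\gamma^2$-equation and combines it with the trivial inclusion inequality via the comparison principle, whereas you verify the full viscosity-solution property and invoke uniqueness directly.
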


\begin{proof}
Denote for $i=1,2$ by $V^{\bar \gamma^i}(t, x,y, \tau)$  the value function of the optimization problem with strategies in $\mathcal{A}^{\bar{\gamma}^i}$. 
Since $\bar\gamma^1<\bar\gamma^2$,  it is immediate that $\mathcal{A}^{\bar{\gamma}^1}\subset \mathcal{A}^{\bar{\gamma}^2} $ and hence $V^{\bar \gamma^2}(t, x, y,\tau)\ge V^{\bar \gamma^1}(t, x, y, \tau)$. To establish the opposite inequality, i.e. $V^{\bar \gamma^1}(t, x,y, \tau) \ge V^{\bar \gamma^2}(t, x,y, \tau)$, we prove that $V^{\bar \gamma^1}$ is a viscosity supersolution of the  HJB equation~\eqref{eq:HJB-bounded} with $\bar \gamma = \bar\gamma ^2$.

Fix some point $(t_0,x_0, y_0, \tau_0)$. Since $V^{\bar \gamma^1}$ is a viscosity solution (and hence in particular a supersolution) of \eqref{eq:HJB-bounded} with  $\bar \gamma = \bar\gamma^1$,  for every smooth  function $\phi(t,x,y,\tau)$  such that
\begin{equation} \label{eq:cond_supersol}
\phi(t,x, y,\tau)\leq V^{\bar \gamma^1}(t, x, y, \tau) \text{ for all }(t,x, y, \tau) \text{ and } \phi(t_0,x_0, y_0, \tau_0)=V^{\bar \gamma^1}(t_0, x_0, y_0, \tau_0)
\end{equation}
it holds that
\begin{align}
-& \bigg ( \phi_t (t_0, x_0, y_0, \tau_0)  +\Pi^*(x_0, y_0, \tau_0)  + \mathcal{L}^{\btau} V^{\bar \gamma^1} (t, x, y, \tau) + \mathcal{L}^Y \phi(t,x,y,\tau)  +  \frac{\sigma^2}{2}\phi_{xx}(t, x,y, \tau) \nonumber \\
  &\quad  + \sup_{0\le \gamma\le \bar\gamma^1} \big \{\phi_x (t, x, y, \tau) (\gamma - \delta x) - (\gamma + \kappa \gamma^2)\big \} - r V^{\bar \gamma^1}(t, x, y, \tau) \bigg ) \ge 0  
  \label{eq:bounded_phi}
\end{align}
It follows from \eqref{eq:cond_supersol} that
$ \phi_x(t_0,x_0, y_0,\tau_0)\le L_{V}$
 with $\phi_x$ being the partial derivative of $\phi$ with respect to $x$. Now note that the supremum in \eqref{eq:bounded_phi}
is attained at $\gamma^*=\frac{(\phi_x-1)^+}{2k}\le \frac{(L_V-1)^+}{2k}<\bar \gamma^1$. Hence  we can replace $\bar \gamma^1$ with $\bar \gamma^2$ in \eqref{eq:bounded_phi} without changing the supremum. This implies that $V^{\bar \gamma^1}$ is a supersolution of \eqref{eq:HJB-bounded} with  $\bar \gamma = \bar\gamma^2$ and completes the proof.
\end{proof}

Summarizing, we have the following result.
\begin{theorem} \label{thm:viscosity}
The value function $V$ of the optimization problem \eqref{eq:Unb_optimization} is Lipschitz in $(x, y)$, H\"older in $t$ and the unique viscosity solution of the HJB equation \eqref{eq:HJB-bounded} for any fixed  $\bar \gamma >\frac{(L_V-1)^+}{2k}$.
\end{theorem}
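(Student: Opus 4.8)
The plan is to deduce the theorem from the three preceding results by identifying the unbounded-control value function $V$ of \eqref{eq:Unb_optimization} with the bounded-control value function $V^{\bar\gamma}$ of \eqref{eq:b_optimization} for every $\bar\gamma$ exceeding the threshold $\frac{(L_V-1)^+}{2\kappa}$. Once the identity $V = V^{\bar\gamma}$ is established, every assertion is immediate: the Lipschitz continuity in $(x,y)$, the H\"older continuity in $t$, and the characterization as the unique viscosity solution of \eqref{eq:HJB-bounded} transfer verbatim from Proposition \ref{prop:viscosity}, while Proposition \ref{prop:large_gamma} guarantees that the right-hand side is insensitive to the particular choice of $\bar\gamma$ above the threshold.

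The core is therefore to prove $V = V^{\bar\gamma}$. Since $\mathcal{A}^{\bar\gamma}\subset\mathcal{A}$, the inequality $V^{\bar\gamma}\le V$ is free, and the content is the reverse bound. First I would fix an admissible $\bgamma\in\mathcal{A}$ and a point $(t,x,y,\tau)$ and dispose of pathological strategies: if $\espt{\int_t^T \gamma_s^2\,\ud s}=\infty$, the quadratic transaction cost forces $J(t,x,y,\tau,\bgamma)=-\infty$, because the remaining terms stay finite (by Lemma \ref{lem:lipschitz}(i) and Assumption \ref{ass:Lipschitz}(i) the maps $\Pi^*$ and $h$ have at most linear growth in $x$, and the state process has finite first moment since $\bgamma\in\mathcal{A}$). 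As the null strategy already yields a finite reward, such $\bgamma$ do not contribute to the supremum defining $V$ and may be discarded. For the remaining strategies I would introduce the truncations $\gamma^{(n)}_s:=\gamma_s\wedge n\in\mathcal{A}^{n}$ with associated investment process $X^{(n)}$; from \eqref{eq:stochastic_investment} the difference solves a linear ODE pathwise, giving $X_s-X^{(n)}_s=\int_t^s e^{-\delta(s-u)}(\gamma_u-\gamma^{(n)}_u)\,\ud u$, so that $\espt{\sup_{t\le s\le T}|X_s-X^{(n)}_s|}\to 0$ by dominated convergence.

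Combining this with the Lipschitz continuity of $\Pi^*$ and $h$ and with dominated convergence for the running-cost terms $\gamma^{(n)}\to\gamma$ and $(\gamma^{(n)})^2\to\gamma^2$ (dominated by the integrable $\gamma$ and $\gamma^2$), I would obtain $J(t,x,y,\tau,\bgamma^{(n)})\to J(t,x,y,\tau,\bgamma)$. Since $J(t,x,y,\tau,\bgamma^{(n)})\le V^{n}(t,x,y,\tau)$ and $n\mapsto V^{n}$ is nondecreasing, passing to the limit and then taking the supremum over $\bgamma$ yields $V\le\sup_n V^{n}$. By Proposition \ref{prop:large_gamma} the sequence $V^n$ is eventually constant, equal to $V^{\bar\gamma}$ for every $n,\bar\gamma>\frac{(L_V-1)^+}{2\kappa}$, so $V=V^{\bar\gamma}$ and the theorem follows.

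I expect the approximation step to be the main obstacle: the admissibility class $\mathcal{A}$ only enforces $L^1$ integrability of $\bgamma$, whereas the quadratic cost naturally lives in $L^2$. The clean way around this is the observation that $L^1\setminus L^2$ strategies are automatically non-optimal, which confines the limit passage to strategies for which dominated convergence of both the linear and the quadratic cost terms is available. An alternative route would bypass truncation altogether: any smooth test function touching $V$ inherits the bound $\phi_x\le L_V$ from Lemma \ref{lem:lipschitz}(ii), so the interior maximizer $\gamma^{*}=\frac{(\phi_x-1)^+}{2\kappa}$ of the Hamiltonian lies in $[0,\bar\gamma)$, whence $V$ is a viscosity solution of \eqref{eq:HJB-bounded} and the comparison principle of Proposition \ref{prop:viscosity} forces $V=V^{\bar\gamma}$; this route, however, first requires establishing the dynamic programming principle for the unbounded problem, which makes the truncation argument the more economical choice here.
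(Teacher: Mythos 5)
Your proposal is correct and follows essentially the same route as the paper: both reduce the theorem to the identity $V=V^{\bar\gamma}$ via the truncations $\gamma\wedge m$, the continuity of $J$ along these truncations, and Proposition \ref{prop:large_gamma} to make the sequence $V^m$ eventually constant, after which the regularity and the viscosity characterization are inherited from Proposition \ref{prop:viscosity}. Your explicit treatment of the $L^1$-versus-$L^2$ issue (discarding strategies with infinite quadratic cost before invoking dominated convergence) is a welcome elaboration of the step the paper dismisses as ``easily seen,'' but it does not change the argument's structure.
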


\begin{proof}
In the sequel we show that $V(t, x, y, \tau)=V^{\bar \gamma}(t, x, y, \tau)$ for $\bar \gamma >\frac{(L_V-1)^+}{2k}$, hence $V$ inherits the regularity properties of $V^{\bar \gamma}$ from Proposition \ref{prop:viscosity}.
%Now let $m>\bar \gamma >\frac{(L_V-1)^+}{2k}$. 
\noindent 

In view of Proposition \ref{prop:large_gamma},  it remains  to show that $V(t, x,y, \tau)=\lim_{m \to \infty} V^{m}(t, x, y, \tau)$ ($V^m $ is the solution of  \eqref{eq:HJB-bounded} with $\bar \gamma =m$.)
The inequality $V(t, x, y, \tau) \ge \lim_{m \to \infty} V^{m}(t, x, y, \tau)$
is clear, since $\mathcal{A}^{m}\subset \mathcal{A}$.
For the converse inequality, we observe that for all $\bgamma \in \mathcal A$ there is a sequence of strategies $\bgamma^m \in \mathcal A^m$ such that $\lim_m \sup_{0 \le t \le T} |\gamma^m_t-\gamma_t|=0$ $\P$-a.s. To show this it is sufficient to take $\gamma_t^m=\gamma_t\wedge m$.
Moreover, it is easily seen that the reward function is continuous  with respect to $\bgamma$ so that  we have the convergence 
$J(t, x, y, \tau,,\bgamma^m)\to J(t, x, y, \tau, \bgamma)$.  Now we choose $\varepsilon>0$ and a strategy $\bgamma^\varepsilon \in \mathcal A$ such that $J(t, x, y, \tau, \bgamma^\varepsilon)\ge V(t,x, y,\tau)-\varepsilon/2$. Let $\{\bgamma^{m,\varepsilon}\}_{m \in \mathbb N}$ such that 
$$\lim_{m \to \infty} \sup_{0 \le t \le T} |\gamma^{m,\varepsilon}_t-\gamma_t^\varepsilon|=  0 \; \text{ $\P$-a.s.},$$ 
hence  $J(t, x, y, \tau, \bgamma^{m, \varepsilon})\to J(t, x, y, \tau, \bgamma^\varepsilon)$. Then, there is $m^*(\varepsilon)\in \mathbb N$ such that for all $m > m^*(\varepsilon)$ it holds that 
$V^{m}(t,x, y, \tau)\ge J(t, x, y, \tau, \bgamma^{m})\ge V(t,x, y,\tau)-\varepsilon$. Since $\varepsilon$ is arbitrary we get the result.
\end{proof}

\subsection{Classical solution} \label{subsec:classical}
In this paragraph we discuss conditions ensuring that the value function is a classical solution of an HJB PIDE. We recall that it is sufficient to work under the assumption that $0\le \gamma \le \bar \gamma$, in virtue of Theorem~\ref{thm:viscosity}.
We consider a cumulative investment $X$ process, the tax process $\btau$ and the factor process $Y$ as in our general framework, i.e. they are described by the equations \eqref{eq:stochastic_investment}, \eqref{eq:tau} and \eqref{equation:Y}, respectively. 

\begin{theorem}\label{thm:class_sol}
Assume that $\sigma>0$ and that there is some $\bar \alpha >0$ such that for all $\xi \in \R^d$, $\xi^\top \mathfrak{S}(t,y)\xi >\bar \alpha ||\xi||^2$. Then the value function $V(t, x, y, \tau)$ is the unique classical solution of the HJB equation \eqref{eq:HJB-bounded} for $\bar \gamma> \frac{(L_V-1)^+}{2\kappa}$.
%\begin{align}
% &v_t (t, I, \tau, y)+ \Pi^*(I, \tau,y )  + \sum_{j\neq i} \left(v(t, I, \tau^j, y)-v(t,I,\tau,y)\right)g_{ij}(t,y)\I_{\tau=\tau^i}\\
% & + v_{y}(t, I, \tau,y) \beta(t, y)+  \frac{\sigma^2}{2}v_{II}(t, I, \tau,y) +  \frac{a^2(t,y)}{2}v_{yy}(t, I, \tau,y)\\
%  &+  \frac{\left( (v_I (t, I, \tau,y) -1)^+\right)^2}{4 \kappa} - \delta I v_I (t, I, \tau,y)+(r-g_{ii}(t,y)) v(t, I, \tau,y) =0 \label{eq:HJBequation_unbounded}
%\end{align}
%with the terminal condition $v(T,I,\tau, y)= h(I)$.
\end{theorem}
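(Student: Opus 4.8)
The starting point is to recognise that, because the investment rate is constrained to $[0,\bar\gamma]$, equation \eqref{eq:HJB-bounded} is in fact a \emph{semilinear} parabolic PIDE rather than a fully nonlinear one. Separating the linear drift in the Hamiltonian, one writes
\[
\sup_{0\le\gamma\le\bar\gamma}\big\{v_x(\gamma-\delta x)-(\gamma+\kappa\gamma^2)\big\}=-\delta x\,v_x+G(v_x),\qquad G(p):=\sup_{0\le\gamma\le\bar\gamma}\big\{(p-1)\gamma-\kappa\gamma^2\big\},
\]
where the supremum is attained at $\gamma^\ast(p)=\min\{(p-1)^+/(2\kappa),\bar\gamma\}$. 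Thus $G$ is convex, continuously differentiable with $G'(p)=\gamma^\ast(p)\in[0,\bar\gamma]$, and hence \emph{globally Lipschitz} with constant at most $\bar\gamma$. The only nonlinearity in \eqref{eq:HJB-bounded} therefore enters through the gradient $v_x$ in a Lipschitz fashion, the second-order part $\tfrac{\sigma^2}{2}v_{xx}+\mathcal L^Y v$ is linear, and the jump term $\mathcal L^{\btau}v$ is a \emph{bounded} operator because $m(\mathcal Z)=M<\infty$, so that $\|\mathcal L^{\btau}w\|_\infty\le 2M\|w\|_\infty$. Under the hypotheses $\sigma>0$ and $\xi^\top\mathfrak S(t,y)\xi>\bar\alpha\|\xi\|^2$ the operator $\tfrac{\sigma^2}{2}\partial_{xx}+\mathcal L^Y$ is \emph{uniformly parabolic} in $(x,y)\in\mathbb R\times\mathbb R^d$, its diffusion matrix being block-diagonal with blocks $\sigma^2$ and $\mathfrak S(t,y)$.

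My plan is to \emph{construct} a classical solution directly and then identify it with $V$ by uniqueness. The nonlocal coupling in $\tau$ is removed by a Picard iteration. Given a bounded and globally Lipschitz $w(t,x,y,\tau)$, freeze the source $g_w:=\Pi^\ast+\mathcal L^{\btau}w$; it is H\"older continuous, since $\Pi^\ast$ is Lipschitz by Lemma \ref{lem:lipschitz}(i) and $\mathcal L^{\btau}w$ inherits the regularity of $w$. For each fixed $\tau$ one then solves the \emph{local} uniformly parabolic problem
\[
v_t+\tfrac{\sigma^2}{2}v_{xx}+\mathcal L^Y v-\delta x\,v_x+G(v_x)+g_w+rv=0,\qquad v(T,\cdot)=h,
\]
which, being semilinear with Lipschitz gradient nonlinearity and H\"older source, admits a unique classical solution $\mathcal T w$ of class $C^{1,2}$ in $(t,x,y)$ on compact sets by the standard Schauder and interior $L^p$ theory for semilinear uniformly parabolic equations (Ladyzhenskaya--Solonnikov--Ural'tseva, Friedman). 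The a priori Lipschitz bound $L_V$ of Lemma \ref{lem:lipschitz}(ii), together with the linear growth of $h$ and $\Pi^\ast$, controls the behaviour at infinity and makes the interior estimates applicable on the unbounded spatial domain.

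It remains to turn $\mathcal T$ into a contraction. Since the only coupling across $\tau$-values is through $\mathcal L^{\btau}$, the bound $\|\mathcal L^{\btau}w_1-\mathcal L^{\btau}w_2\|_\infty\le 2M\|w_1-w_2\|_\infty$, combined with the maximum-principle estimate for the linear parabolic solution operator---either on a sufficiently short time interval (iterated backward from $T$) or in a discount-weighted supremum norm---gives $\|\mathcal T w_1-\mathcal T w_2\|\le\theta\|w_1-w_2\|$ with $\theta<1$. Banach's fixed-point theorem then yields a unique fixed point $v^\ast=\mathcal T v^\ast$, which is continuous in $\tau$ and $C^{1,2}$ in $(t,x,y)$ and therefore a classical solution of the full PIDE \eqref{eq:HJB-bounded}. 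Finally, any classical solution is in particular a viscosity solution, so the comparison principle of Proposition \ref{prop:viscosity} together with the uniqueness part of Theorem \ref{thm:viscosity} forces $v^\ast=V^{\bar\gamma}=V$ for $\bar\gamma>(L_V-1)^+/(2\kappa)$; the same comparison argument gives uniqueness of the classical solution.

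I expect the genuine difficulties to be concentrated in the treatment of the nonlocal term together with the unbounded domain. On a bounded domain the result would be a direct consequence of semilinear Schauder theory, but the jump operator $\mathcal L^{\btau}$ prevents a purely local argument and forces the fixed-point-plus-interior-estimate route sketched above; making precise the a priori growth and Lipschitz control needed to apply the interior parabolic estimates uniformly in $(x,y)$ is the step that requires care, and it is precisely here that the Lipschitz bound $L_V$ from Lemma \ref{lem:lipschitz} is used. Note also that no regularity of $v$ in $\tau$ beyond continuity is needed, since $\btau$ carries no diffusion and enters the equation only through the bounded integral operator $\mathcal L^{\btau}$.
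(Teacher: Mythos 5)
Your route is viable but genuinely different from the paper's. Both arguments exploit the same two structural facts --- that the Hamiltonian is a Lipschitz, convex function of $v_x$ alone (so the equation is semilinear) and that $\mathcal L^{\btau}$ is a bounded zeroth-order operator because $m(\mathcal Z)<\infty$ --- but they deploy them in opposite directions. You \emph{construct} a classical solution from scratch: freeze the jump term with a Picard iterate, solve a global semilinear Cauchy problem on $\R\times\R^d$ for each $\tau$, contract, and only at the very end identify the fixed point with $V$ through the viscosity comparison principle of Proposition \ref{prop:viscosity} and Theorem \ref{thm:viscosity}. The paper instead \emph{bootstraps regularity of the already-known viscosity solution}: it freezes the jump term using $V$ itself (so $f^\tau=\Pi^*+\int_{\mathcal Z}V(\cdot,\tau+\Gamma)\,m(\ud z)$ is a fixed Lipschitz source and no iteration is needed), observes that $V^\tau=V(\cdot,\tau)$ is a viscosity solution of the resulting \emph{local} quasilinear PDE, solves the terminal--boundary value problem on balls $Q_K$ with boundary data $V^\tau$ via Theorem 6.4 of Lady\v{z}enskaja--Solonnikov--Ural'ceva, and identifies the classical solution with $V^\tau$ on each $Q_K$ by the comparison principle of Fleming--Soner. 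What the paper's localization buys is that only \emph{interior} Schauder estimates on bounded domains are ever invoked; what your construction buys is a self-contained existence proof that does not presuppose the viscosity characterization of $V$ for the freezing step (only for the final identification).

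The one place where your sketch is materially thinner than the paper's argument is precisely where the localization does its work. You invoke LSU/Friedman for a classical solution of the Cauchy problem on the \emph{unbounded} domain $\R\times\R^d$ with a linearly growing drift $-\delta x$, a possibly unbounded $\mathfrak S$, and data $h$, $\Pi^*$ of linear growth; the standard Schauder theory you cite is stated for bounded domains or bounded coefficients, so this step needs either a separate citation for Cauchy problems with linearly growing coefficients or the same interior-estimate-on-$Q_K$ device the paper uses. Relatedly, your contraction is set up in the supremum norm for ``bounded'' $w$, but $V$ and all relevant iterates have linear growth (through $h$ and $\Pi^*$), so the fixed point must be run in a weighted norm such as $\sup|w|/(1+|x|+\|y\|)$ --- through which the bound $\|\mathcal L^{\btau}w\|\le 2M\|w\|$ still passes, since $\mathcal L^{\btau}$ acts only in $\tau$. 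Both points are repairable, but as written they are gaps rather than routine omissions.
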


The proof of this result is given in Appendix~\ref{appendix:tech_results}.

\begin{corollary}
Under the assumptions of Theorem \ref{thm:class_sol}, the optimal strategy satisfies $\gamma^*_t=\gamma^*(t, X_t, Y_t, \tau_t)=\frac{(V_x(t, X_t, Y_t, \tau_t)-1)_+}{2\kappa}$ for every $t \in [0,T]$.
\end{corollary}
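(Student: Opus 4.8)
The plan is a verification argument built on Theorem~\ref{thm:class_sol}, which ensures that for a fixed $\bar\gamma > \frac{(L_V-1)^+}{2\kappa}$ the value function $V$ is a classical solution of~\eqref{eq:HJB-bounded}, so that $V_t$, $V_x$, $V_{xx}$ and $\nabla_y V$ exist and are continuous. The first step is to identify the pointwise maximizer of the Hamiltonian: the only $\gamma$-dependent part of the supremum in~\eqref{eq:HJB-bounded} is the strictly concave quadratic $\gamma \mapsto (V_x(t,x,y,\tau)-1)\gamma - \kappa\gamma^2$, whose unconstrained stationary point is $\tfrac{V_x-1}{2\kappa}$. Imposing the irreversibility constraint $\gamma \ge 0$ gives the candidate feedback $\gamma^*(t,x,y,\tau) = \frac{(V_x(t,x,y,\tau)-1)^+}{2\kappa}$. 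Since $V$ is Lipschitz in $x$ with constant $L_V$ (Lemma~\ref{lem:lipschitz}), we have $V_x \le L_V$ and hence $\gamma^* \le \frac{(L_V-1)^+}{2\kappa} < \bar\gamma$, so the upper bound is inactive and $\gamma^*$ is the unique maximizer over $[0,\bar\gamma]$. This already yields the claimed form; what remains is to show that the corresponding feedback control is admissible and optimal.

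I would then insert $\gamma^*$ into~\eqref{eq:stochastic_investment} to obtain a closed-loop SDE for $X$; since $V_x$ is continuous and $\gamma^*$ is bounded by $\frac{(L_V-1)^+}{2\kappa}$, this equation is well posed and the process $\gamma^*_t = \gamma^*(t,X_t,Y_t,\tau_t)$ is nonnegative, càdlàg and bounded, hence lies in $\mathcal{A}^{\bar\gamma} \subset \mathcal{A}$. For optimality I would apply Itô's formula to $s \mapsto e^{-r(s-t)}V(s,X_s,Y_s,\tau_s)$ on $[t,T]$ along an arbitrary $\bgamma \in \mathcal{A}^{\bar\gamma}$ and substitute the HJB equation into the finite-variation part. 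After integrating and taking the conditional expectation $\espt{\cdot}$, this yields the representation
\[
V(t,x,y,\tau) = J(t,x,y,\tau,\bgamma) + \espt{\int_t^T e^{-r(s-t)}\Delta_s\,\ud s},
\]
where the Hamiltonian gap
\[
\Delta_s = \sup_{0\le\gamma\le\bar\gamma}\big\{V_x(\gamma-\delta X_s) - (\gamma+\kappa\gamma^2)\big\} - \big(V_x(\gamma_s-\delta X_s) - (\gamma_s+\kappa\gamma_s^2)\big) \ge 0
\]
vanishes precisely when $\gamma_s = \gamma^*(s,X_s,Y_s,\tau_s)$. Hence $V(t,x,y,\tau) \ge J(t,x,y,\tau,\bgamma)$ for every admissible strategy, with equality along $\gamma^*$, so $\gamma^*$ is optimal and has the stated feedback form.

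The one genuinely technical point is to justify that the stochastic integrals generated by Itô's formula are true martingales, so that they vanish under $\espt{\cdot}$. The compensated-Poisson term is harmless because $m$ is finite and the jumps of $\btau$ are bounded; the $X$-diffusion term $\int V_x \sigma\,\ud W_s$ is controlled since $\sigma$ is constant and $V_x$ is bounded by $L_V$; and the $Y$-diffusion term $\int \nabla_y V\,\alpha(s,Y_s)\,\ud B_s$ requires the moment estimate $\espt{\int_t^T \|\alpha(s,Y_s)\|^2\,\ud s} < \infty$, which follows from the linear growth of $\alpha$ under Assumption~\ref{ass:Lipschitz}, the standard moment bounds for~\eqref{equation:Y}, and the boundedness of $\nabla_y V$ coming from the Lipschitz regularity of $V$ in $(x,y)$ established in Theorem~\ref{thm:viscosity}. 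Together with the well-posedness of the closed-loop equation, this integrability is the only ingredient requiring care; the remainder is the routine maximization and substitution above.
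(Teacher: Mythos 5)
Your proposal is correct and follows essentially the same route as the paper: the paper's proof is a one-line appeal to ``first and second order conditions'' for the Hamiltonian once $V$ is known to be a classical solution, with the standard verification theorem left implicit, and your argument simply supplies the details (pointwise maximization of the concave quadratic, the bound $\gamma^*\le \frac{(L_V-1)^+}{2\kappa}<\bar\gamma$ showing the cap is inactive, and the It\^o/martingale verification). The only point stated a bit loosely is the well-posedness of the closed-loop SDE, where continuity and boundedness of the feedback drift alone do not give strong uniqueness in general, but the nondegenerate additive noise $\sigma>0$ assumed in Theorem~\ref{thm:class_sol} rescues this via standard results for SDEs with bounded measurable drift.
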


\begin{proof}
Since the function $V$ is a classical solution of the equation \eqref{eq:HJB-bounded}, we get that $\gamma^*(t, X_t, Y_t, \tau_t)$ is the optimal strategy by verifying first and second order conditions.
\end{proof}

In appendix \ref{app:viscosity} we discuss an example which shows  that the assumption $\sigma>0$, i.e. strict ellipticality of the generator of the controlled process $X$, plays a crucial role to obtain the classical-solution characterization discussed in this section. This example illustrates in particular that, when the assumption is not satisfied, the value function may be a strict viscosity solution of the HJB equation. 

%\begin{remark}[Examples with a semi-closed form solution]\label{rem:closed_form}
%For comparison reasons we discuss here the case where disinvestment is allowed, namely $\gamma \in \mathbb{R}$. In this case we can characterize the value function, and hence the optimal investment strategy,  up to the solution of a coupled system of ODEs when the quantity to be produced  is constant, the cost function and the residual value of investments are  quadratic in $x$ and all the model coefficients are   independent of the factor process $Y$. We refer to Appendix \ref{app:benchmark} for more details. We consider this case as a benchmark for testing our numerical experiments.
%\end{remark}

%%%%%%%%%%%%%%%%%%%%%Game%%%%%%%%%%%%%%%%%%%%%%%%%%%%%%%%%%%%%%%%%%%%%%%%%%%%%%%%%

\section{Tax policy uncertainty and stochastic differential games}\label{sec:game}

Climate policy variables such as emission  tax rates are the result of unpredictable political processes. Hence it is difficult  to come up with a `correct' probabilistic model for the evolution of future emission tax rates.  In this section we therefore study the optimal investment into abatement technology for an uncertainty-averse producer who considers a set $\mathcal T $ of plausible tax processes but who does not assume  a precise probabilistic model for the future tax evolution. Instead, she  determines her  optimal production and investment strategy as the equilibrium strategy of a stochastic differential game with malevolent opponent. The objective of the producer remains that of maximizing expected profits. On the other hand, the opponent chooses a tax process from  $\mathcal T $  to minimize the  profits of the producer. From the viewpoint of the producer the tax process chosen by the opponent thus constitutes a worst-case tax scenario.  

Stochastic differential games have been used before as a tool for modelling the decision making of uncertainty averse investors.  For instance, \citet{bib:avellaneda-paras-96} or \citet{bib:herrman-karbe-seifried-17} use stochastic differential games to deal with model risk in the context of pricing and  hedging positions in derivative securities. Important contributions on the mathematical theory  of stochastic differential games  include \citet{bib:friedman-72}, \citet{bib:fleming-souganidis-89} or, more recently, \citet{bib:possamai-touzi-zhang-20}.

\subsection{The differential game} 
We now describe  the game between the producer and her opponent in detail. As in the tax risk case, the producer chooses her production and her investment, whereas the opponent chooses the tax rate. % from a set of plausible tax rates.    
The dynamics of the factor process $Y$ and of the stochastic investment $X$ are given by \eqref{equation:Y} and \eqref{eq:stochastic_investment}, respectively. We consider the  profit function $\pi(q,x,y,\tau)$ introduced in  \eqref{eq:pi}, where the cost function satisfies the Assumption \ref{ass:cost_regularity}.  
We specify tax rates as follows.   
We assume that the set $\mathcal{T}$ consists of all adapted  tax processes with  values in a band around some deterministic  tax plan  $\bar \tau \colon[0,T] \mapsto [0,\infty) $. The tax plan $\bar \tau$ can be interpreted as the producer's prediction of the future tax evolution or as the future carbon tax rate officially announced by the government at $t=0$. 
Given functions $\tau^{\min}, \tau^{\max} : [0,T] \to [0,+\infty)$ with $\tau^{\min}(t)\le \bar \tau(t)\le \tau^{\max}(t)$ for every $t \in [0,T]$,  we define  $\mathcal T$ as  the set of all adapted processes $\btau=\left(\tau_t\right)_{0 \le t\le T}$ such that $ \tau^{\min}(t)\le \tau_t \le \tau^{\max}(t)$ for all $t \in [0,T]$.   %  that is tax processes are penalccording to their deviation from the anticipated tax plan $\bar \tau$.ized a  
Next, we denote by $\mathcal Q$ the set of all adapted production processes $\bq=\left(q_t\right)_{0 \le t\le T}$  taking values in $[0, q^{\max}]$, for some $q^{\max}>0$ that represents the maximum capacity of production.
Finally, recall that $\mathcal A$ denotes the set admissible investment strategies, i.e. the set of all adapted process $\bgamma=\left(\gamma_t\right)_{0 \le t \le T}$ with values in $[0, +\infty)$ and $\mathbb E\left[\int_0^T \gamma_t \ud t\right]<\infty$. 

Given a tax process $\btau \in \mathcal T$, the producer chooses the investment rate $\bgamma\in \mathcal A$  and the quantity $\bq \in \mathcal Q$ of energy to be produced in order to maximise her expected profits given  by 
$$ \tilde J(\btau, \bgamma, \bq) = \mathbb{E} \Big [\int_0^T\big( \Pi(q_s, X_s, Y_s, \tau_s) - \gamma_s -\kappa \gamma_s^2  \big )e^{-r(s-t)} \ud s +h(X_T) e^{-r(T-t)} \Big ] \,.$$
Note that now the choice of $\bq$ is a part of the game and cannot be done upfront (other than in the tax risk case). Given an investment strategy $\bgamma$ and a production process $\bq$, the opponent on the other hand chooses the tax process $\btau \in \mathcal T$ in order to minimize the expected profit of the producer. In this problem tax processes are penalized via the   function   
\begin{equation} \label{eq:penalty}
\btau \mapsto  \rho(\btau) = \mathbb{E} \Big [\int_0^T \nu_1 (\tau_t-\bar \tau(t))^2 \,\ud t \Big ]\,,  
\end{equation}
for a fixed constant $\nu_1 >0$,  that is the opponent wants to minimize $\tilde J(\btau,\bgamma, \bq) + \rho(\btau)$. The interpretation is as follows: from the viewpoint of the uncertainty averse producer the tax process $\btau$ chosen by the opponent constitutes a \emph{worst-case tax scenario}. The penalty $\rho(\btau)$ reflects the plausibility of different tax processes from the viewpoint of the producer. In particular, a process $\btau$  which  deviates substantially  from $\bar \tau$ is considered implausible by the producer and it is therefore penalized strongly by  the penalty function $\rho(\cdot)$. 
The penalization function $\rho(\btau)$ is independent of $\bq$ and $\bgamma$. Hence it can be added to the objective function of the producer without altering his decisions. We may therefore model the game between the producer and the opponent as a zero sum game with reward function
\begin{align}
J(t,x,y,\btau, \bgamma, \bq) = & \mathbb{E}_t \Big [\int_t^T\big( \Pi(q_s, X_s, Y_s, \tau_s) + \nu_1 (\tau_s-\bar \tau(s))^2  \\ & \quad - \gamma_s -\kappa \gamma_s^2  \big )e^{-r(s-t)} \ud s +h(X_T) e^{-r(T-t)} \Big ]\,.\label{eq:reward_game}
\end{align}
Following \citet{bib:friedman-72} we call a pair of strategies $(\bgamma^*,\bq^*)$ (for the producer) and  $\btau^*$  (for the opponent) an \emph{equilibrium} for the game if for any $\btau \in \mathcal{T}$, $\bgamma\in \mathcal A, \bq \in \mathcal Q$,
\begin{equation} \label{eq:equilibrium}
   J(0,X_0,Y_0,\btau^*,\bgamma,\bq) \le J(0,X_0,Y_0,\btau^*,\bgamma^*,\bq^*) \le  J(0,X_0,Y_0,\btau,\bgamma^*,\bq^*) \,.
\end{equation}
We then call $u(t, x , y):=J(t,x,y,\btau^*,\bgamma^*,\bq^*)$ the \emph{value} of the game. 
In the sequel we show that under certain regularity conditions  the  game \eqref{eq:reward_game} has  equilibrium strategies in feedback form, which implies that the value of the game is well defined. 

\subsubsection*{Comments.}  Note that in the game \eqref{eq:reward_game},  tax uncertainty is modelled by the width of the band around $\bar \btau$ and by the size of the constant $\nu_1$ in the penalty function \eqref{eq:penalty}, where a wider band or a smaller value of $\nu_1$ correspond to an increase in (perceived) uncertainty. 
Indeed, a large value of $\nu_1$ implies that  tax processes deviating strongly from $\bar \tau$ are strongly penalized and hence rarely  chosen by  the opponent, so that uncertainty is reduced. 

Finally, we caution  against an interpretation of the opponent in this game  as a regulator or the government. Indeed, a reasonable objective function for a government that wants to maximise the social welfare should account for relevant quantities such as overall emissions,  energy production or  tax revenue which are not part of  the reward function \eqref{eq:reward_game}. For an example of a `reasonable' reward function  of a regulator  we refer to \citet[Eqn. (18)]{bib:carmona-et-al-21} . 

\subsection{Characterisation of equilibrium strategies.}

In the sequel we aim to characterise the value of the game and the equilibrium strategies. In the context of stochastic differential games this is usually done via a  suitable  Bellman-Isaacs equation. However, since in our model  the tax value $\tau$ chosen by the opponent  affects only the running reward, the Bellman-Isaacs equation can be reduced to a standard HJB equation.     

We define the function  
\begin{equation}\label{eq:def-g}
    g (q, \tau; x,y) =  \Pi(q, x, y, \tau)  + \nu_1 (\tau-\bar \tau(t))^2  \,, 
\end{equation}
and recall that $\Pi(q, x, y, \tau)= p(y) q - [C_0(q, x, y) - C_1(q, x, y) \tau] + \nu_0(q) \tau $. 
In Lemma~\ref{lem:saddle-point}  we show that for every fixed $(x,y)$, the function  $g$ admits a unique saddle point $(q^*,\tau^*) $. Hence  we may define functions  $\hat q(x,y)$ and $\hat \tau(x,y)$ that map  $(x,y)$ to the associated saddle point of $g$. Denote by 
\begin{equation}\label{eq:def-G}
    G(x,y) = g (\hat q(x,y),\hat \tau(x,y),x,y) =  \max_{q} \min_{\tau}   g (q, \tau ; x,y) = \min_{\tau}   \max_{q} g (q,\tau; x,y)
\end{equation}
the corresponding saddle value, where the maximum is taken over  $q\in[0,q^{\max}] $ and the minimum over $\tau \in [\tau^{\min},\tau^{\max}]$. In the next result we show that the equilibrium strategy and the value of the game can be characterised in terms of an HJB equation with running reward given by the function $G$.

\begin{proposition}\label{prop:verification}
Suppose that for fixed $(x,y)$  the function $g$ has a saddle point  $(\hat q(x,y),  \hat \tau(x,y))$ and that the PDE 
 \begin{equation}\label{eq:Bellman-Isaacs-simplified}
u_t(t,x,y)  + G (x, y) + \mathcal{ L}^Y u (t,x,y)   + \frac{\sigma^2}{2} u_{xx}(t,x,y) + \sup_{\gamma \ge 0} \left(\gamma u_x(t,x,y) -\gamma - \kappa \gamma^2\right)  =  r u \,
\end{equation}
with the final condition $u(T, x, y)=h(x)$ has a classical solution.  Let $\hat \gamma(t,x,y) = (u_x(t, x,y) -1)^+/(2\kappa) $. Then $u$ is the value function of the game and   the strategies   $\bq^* = (\hat q(X_t,Y_t))_{0 \le t \le T} $, $\bgamma^*= (\hat \gamma(t,X_t,Y_t))_{0 \le t\le T}$ and $\btau^* = (\hat \tau(X_t,Y_t))_{0 \le t\le T}$  are equilibrium strategies for  the game. 
\end{proposition}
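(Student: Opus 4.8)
The plan is to establish the two saddle-point inequalities in \eqref{eq:equilibrium} by a verification argument based on Itô's formula, exploiting crucially that $u$ solves \eqref{eq:Bellman-Isaacs-simplified} and does \emph{not} depend on $\tau$, so that the tax only enters the problem through the running reward $g$ of \eqref{eq:def-g}. First I would fix an initial point $(t,x,y)$ and arbitrary controls $\btau\in\mathcal T$, $\bgamma\in\A$, $\bq\in\mathcal Q$, let $X$ solve \eqref{eq:stochastic_investment} driven by $\bgamma$ and $Y$ solve \eqref{equation:Y}, and apply Itô's formula to $s\mapsto e^{-r(s-t)}u(s,X_s,Y_s)$ on $[t,T]$. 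After localizing by a sequence of stopping times to discard the local-martingale part and passing to the limit (justified by the growth of $u$ and its first derivatives combined with moment bounds on $(X,Y)$ that hold for admissible $\bgamma$), and using the terminal condition $u(T,\cdot)=h$, I obtain the identity
\[
J(t,x,y,\btau,\bgamma,\bq) - u(t,x,y) = \mathbb{E}_t\!\left[\int_t^T e^{-r(s-t)}\,\Phi_s\,\mathrm ds\right].
\]
Substituting \eqref{eq:Bellman-Isaacs-simplified} to eliminate $u_t+\mathcal L^Y u+\tfrac{\sigma^2}{2}u_{xx}-ru$ and recalling \eqref{eq:def-g}--\eqref{eq:def-G}, the integrand splits into a game part and an investment part,
\[
\Phi_s = \big(g(q_s,\tau_s;X_s,Y_s)-G(X_s,Y_s)\big) + \Big(u_x(\gamma_s-\delta X_s)-\gamma_s-\kappa\gamma_s^2 - \sup_{\gamma\ge0}\big(u_x(\gamma-\delta X_s)-\gamma-\kappa\gamma^2\big)\Big),
\]
where $u_x$ is evaluated at $(s,X_s,Y_s)$ and the drift term $-\delta X_s u_x$ is absorbed into the investment Hamiltonian exactly as in \eqref{eq:HJB-bounded}.

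Next I would sign each bracket under the two substitutions. Since $-\delta X_s u_x$ is independent of the maximizing argument, the investment bracket is always $\le 0$ and vanishes precisely at $\gamma_s=\hat\gamma(s,X_s,Y_s)=(u_x-1)^+/(2\kappa)$, by definition of the supremum. For the game bracket I invoke Lemma~\ref{lem:saddle-point}, which gives for every $(x,y)$ the saddle-point inequalities $g(q,\hat\tau;x,y)\le G(x,y)\le g(\hat q,\tau;x,y)$ for all $q,\tau$. For the right inequality in \eqref{eq:equilibrium} I fix $\bgamma=\bgamma^*$, $\bq=\bq^*$ and vary $\btau$: the investment bracket vanishes and the game bracket equals $g(\hat q(X_s,Y_s),\tau_s;X_s,Y_s)-G(X_s,Y_s)\ge0$, so $\Phi_s\ge0$ and hence $J(t,x,y,\btau,\bgamma^*,\bq^*)\ge u(t,x,y)$, with equality at $\btau=\btau^*$. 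For the left inequality I fix $\btau=\btau^*$ and vary $\bgamma,\bq$: now the game bracket is $g(q_s,\hat\tau(X_s,Y_s);X_s,Y_s)-G(X_s,Y_s)\le0$ and the investment bracket is $\le0$, so $\Phi_s\le0$ and $J(t,x,y,\btau^*,\bgamma,\bq)\le u(t,x,y)$, again with equality at $(\bgamma^*,\bq^*)$. Evaluating at the equilibrium triple gives $\Phi_s\equiv0$, whence $u(t,x,y)=J(t,x,y,\btau^*,\bgamma^*,\bq^*)$; combined with the two inequalities this shows that $(\bgamma^*,\bq^*,\btau^*)$ is an equilibrium and that $u$ is the value of the game.

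Two points need care. First, the feedback strategies must be admissible and must yield a well-posed state process: $\hat q,\hat\tau$ are measurable in $(x,y)$ because the saddle point is unique by Lemma~\ref{lem:saddle-point}, $\hat\gamma\ge0$ is automatic, and existence and uniqueness of the closed-loop equation \eqref{eq:stochastic_investment} under $\hat\gamma$ follow from the regularity of $u_x$ implied by $u$ being a classical solution. Second, and this is the main technical obstacle, the localization argument requires enough integrability to discard the stochastic integrals and to interchange limit and expectation; this is exactly where growth bounds on $u,u_x,u_y$ and uniform-in-time moment estimates for $(X,Y)$ (using $\mathbb{E}\int_0^T\gamma_t\,\mathrm dt<\infty$) are needed, and where the Lipschitz and growth assumptions on the coefficients and on $h$ do the work.
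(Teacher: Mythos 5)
Your proposal is correct and rests on the same two pillars as the paper's proof — the classical solution of \eqref{eq:Bellman-Isaacs-simplified} and the saddle-point property of $g$ from Lemma~\ref{lem:saddle-point} — but it executes the verification differently. The paper freezes one player at a time: it first fixes $\btau^*$, observes that $G(x,y)=\sup_q g(q,\hat\tau(x,y);x,y)$ turns \eqref{eq:Bellman-Isaacs-simplified} into the HJB equation of the standard control problem \eqref{eq:control-prob-producer}, and invokes the textbook verification theorem (Pham, Theorem~3.5.2) to get the left inequality in \eqref{eq:equilibrium} together with $u=J(\cdot,\btau^*,\bgamma^*,\bq^*)$; the right inequality is then obtained by "a similar argument" with the roles reversed, using $G=\min_\tau g(\hat q,\tau)$. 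You instead carry out a single explicit It\^o computation for arbitrary $(\btau,\bgamma,\bq)$ and split the resulting integrand into a game bracket $g(q_s,\tau_s)-G$ and an investment bracket, signing each under the two substitutions. Your route is more self-contained (it re-proves the verification theorem inline rather than citing it) and treats the two players symmetrically in one pass, at the cost of having to address the localization/integrability and closed-loop well-posedness issues yourself — which you correctly flag; the paper delegates exactly these points to the cited theorem. One small caution: continuity of $u_x$ (all that a classical solution guarantees) does not by itself give strong existence and uniqueness for the closed-loop SDE under $\hat\gamma$; one either needs the local Lipschitz regularity that comes out of the construction of $u$ in Theorem~\ref{thm:Bellman-Isaacs}, or a weak-solution formulation — but the paper's own proof is no more explicit on this point.
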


\begin{proof}
This proposition can be established via  classical verification arguments. Suppose that the opponent  uses the strategy $\btau^*$ and denote by $X$ the solution of the SDE 
$$\ud X_t = (\hat \gamma(t,X_t,Y_t) -\delta X_t )\ud t + \sigma \ud W_t \,.$$ Since  $(\hat q(x,y),  \hat \tau(x,y))$ is a saddle point of $g$, we have 
$G(x,y) = \sup_{q \in [0,q^{\max}]} g (q,\hat \tau(x,y);x, y)  $, and we
may rewrite the PDE~\eqref{eq:Bellman-Isaacs-simplified} in the form
\begin{align}
u_t(t,x,y)  & + \mathcal{ L}^Y u (t,x,y)   + \frac{\sigma^2}{2} u_{xx}(t,x,y) -\delta x u_x(t,x,y) \\
&+ \sup_{q \in [0,q^{\max}]} g (q,\hat \tau(x,y);x, y) 
  +  \sup_{\gamma \ge 0} \left\{\gamma u_x(t,x,y) -\gamma - \kappa \gamma^2\right\}  =  r u(t,x,y) \,. 
\end{align}
Moreover $u(T, x, y)=h(x)$, so that  this is the HJB equation for the  control problem 
\begin{equation} \label{eq:control-prob-producer}
    \max_{\bq\in \mathcal{Q}, \bgamma \in \mathcal{A} } \espt{ \int_t^T  \left (g (q_s,\hat \tau(X_s,Y_s);X_s, Y_s ) - \gamma_s- \kappa \gamma_s^2 \right ) e^{- r(s-t)} \ud s + e^{-r(T-t)} h(X_T) } \,.
\end{equation}
A standard verification result for stochastic control problems such as Theorem 3.5.2 in \citet{pham2009continuous} now shows that $u$ is the value function for the control problem \eqref{eq:control-prob-producer} and that $\bq^*$ and $\bgamma^*$ are an optimal strategy in \eqref{eq:control-prob-producer}. A similar argument shows that $\btau^*$ is optimal against $\bq^*$ and $\bgamma^*$, which completes the proof. 
\end{proof}

Next we verify that the assumptions of Proposition \ref{prop:verification} are satisfied. We begin with  the existence of a unique saddle point for $g$.   We omit the arguments $x,y$ to ease the notation.  
For fixed $q \in [0,q^{\max}]$ the function $\tau \mapsto g(q,\tau)$ is strictly convex and has a unique minimum on $[\tau^{\min} , \tau^{\max}]$ which we denote by $\tau(q)$. Similarly, the function $q\mapsto g(q,\tau)$ is strictly concave and has a unique maximum $q(\tau) $ on $[0,q^{\max}]$. A saddle point $(q^*,\tau^*)$ of $g$ is characterized by the equations
\begin{equation}\label{eq:char_saddle_point}
    \tau^*= \tau(q^*) \quad \text{ and } q^* = q(\tau^*).
\end{equation}
We use first order conditions to identify $\tau(q) $ and $q(\tau)$. It holds that 
\begin{equation} \label{ref:tau_q}
    \tau(q) = \Big \{\bar \tau  + \frac{1}{2 \nu_1} \big(C_1(q) - \nu_0 (q))   \Big \} \vee \tau^{\min} \wedge \tau^{\max}
\end{equation}
The optimal instantaneous production $q(\tau)$ is determined   as in Section~\ref{sec:optim-problem}. In particular, the  FOC characterizing $q(\tau)$ is 
$p - \partial_q C_0(q) - (\partial_q C_1(q)  - \partial_q \nu_0(q) )\tau =0$, and   $q(\tau)$ is therefore given by \eqref{eq:cases}. The existence of a unique solution to equation \eqref{eq:char_saddle_point} is established in the next lemma, whose proof is given in Appendix~\ref{app:game}. 

\begin{lemma}\label{lem:saddle-point} Suppose that the cost function $C$ satisfies Assumption~\ref{ass:cost_regularity} and that the functions  $C_0$, $C_1$ and  $\nu_0$ are moreover $C^2$ in $q$. Then,  for every fixed $(x,y)$,  the function $g(q,\tau;x,y)$ has a unique saddle point $(q^*,\tau^*) =: (\hat q(x,y),\hat \tau(x,y))$.    
\end{lemma}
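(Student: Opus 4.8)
The plan is to treat Lemma~\ref{lem:saddle-point} as a textbook concave–convex saddle-point problem on the compact rectangle $K := [0,q^{\max}]\times[\tau^{\min},\tau^{\max}]$, deriving existence from a minimax theorem and uniqueness from strictness. First I would record the structural properties of $g$ from \eqref{eq:def-g}. Continuity in $(q,\tau)$ is immediate from Assumption~\ref{ass:cost_regularity}. For fixed $\tau\in[\tau^{\min},\tau^{\max}]\subset[0,\infty)$, the map $q\mapsto g(q,\tau)$ is \emph{strictly} concave: the term $-C_0(q,x,y)$ is strictly concave since $C_0(\cdot,x,y)$ is strictly convex by Assumption~\ref{ass:cost_regularity}(i); the terms $-C_1(q,x,y)\tau$ and $\nu_0(q)\tau$ are concave in $q$ because $\tau\ge 0$, $C_1(\cdot,x,y)$ is convex and $\nu_0$ is concave (Assumption~\ref{ass:cost_regularity}(i),(iii)); and $p(y)q$ is linear while the penalty is constant in $q$. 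Dually, for fixed $q$ the map $\tau\mapsto g(q,\tau)$ is a quadratic with leading coefficient $\nu_1>0$, hence strictly convex. This already justifies the unique best responses $q(\tau)$ and $\tau(q)$ appearing in \eqref{ref:tau_q}--\eqref{eq:cases} and in the fixed-point characterization \eqref{eq:char_saddle_point}.

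For existence, since $K$ is convex and compact and $g$ is continuous, concave in $q$ and convex in $\tau$, Sion's minimax theorem gives $\max_q\min_\tau g=\min_\tau\max_q g=:G$. Writing $\phi(q):=\min_\tau g(q,\tau)$ and $\psi(\tau):=\max_q g(q,\tau)$, both continuous on their compact intervals by Berge's maximum theorem, I pick $q^*$ attaining $\max_q\phi$ and $\tau^*$ attaining $\min_\tau\psi$, so $\phi(q^*)=\psi(\tau^*)=G$. Then $G=\phi(q^*)\le g(q^*,\tau^*)\le\psi(\tau^*)=G$, whence $g(q^*,\tau^*)=G$ and the defining saddle inequalities $g(q,\tau^*)\le g(q^*,\tau^*)\le g(q^*,\tau)$ hold for all $(q,\tau)\in K$; that is, $(q^*,\tau^*)$ is a saddle point and \eqref{eq:def-G} holds.

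Uniqueness then follows from the interchangeability of saddle points together with strictness. If $(q_1,\tau_1)$ and $(q_2,\tau_2)$ are both saddle points, both have value $G$, and the saddle inequalities give $g(q_1,\tau_2)\ge g(q_1,\tau_1)=G$ (as $\tau_1$ minimizes $g(q_1,\cdot)$) and $g(q_1,\tau_2)\le g(q_2,\tau_2)=G$ (as $q_2$ maximizes $g(\cdot,\tau_2)$), so $g(q_1,\tau_2)=G$. Thus $q_1$ and $q_2$ both maximize the strictly concave map $g(\cdot,\tau_2)$, forcing $q_1=q_2=:q^*$; then $\tau_1$ and $\tau_2$ both minimize the strictly convex map $g(q^*,\cdot)$, forcing $\tau_1=\tau_2$. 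Hence the saddle point is unique, and it may be named $(\hat q(x,y),\hat\tau(x,y))$ as in the statement.

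The routine parts are the continuity and the two convexity checks; the one genuinely load-bearing point is the \emph{strict} concavity in $q$, which rests on the strict convexity of $C_0(\cdot,x,y)$ in Assumption~\ref{ass:cost_regularity}(i) (note that $\nu_0$ merely concave and $C_1$ merely convex would not by themselves force strictness). The minimax route I have sketched does not require the extra $C^2$ hypothesis of the lemma; that regularity is naturally suited to an alternative, more hands-on proof that constructs the saddle point as a fixed point of the composite best response $\tau\mapsto\tau(q(\tau))$ on $[\tau^{\min},\tau^{\max}]$ via \eqref{eq:char_saddle_point}. I expect the main obstacle to lie precisely there: showing this composite maps the interval into itself and is a strict contraction requires differentiating the best responses through the implicit relations \eqref{ref:tau_q}--\eqref{eq:cases}, bounding $|q'(\tau)|$ and $|\tau'(q)|$, and correctly handling the cases where the minimizer in \eqref{ref:tau_q} is clipped at $\tau^{\min}$ or $\tau^{\max}$. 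The minimax argument sidesteps this boundary bookkeeping entirely, so I would adopt it for existence and uniqueness and reserve the differentiable description for later, where smoothness of $\hat q$ and $\hat\tau$ is needed for the PDE~\eqref{eq:def-G}.
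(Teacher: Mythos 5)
Your proof is correct, and it takes a genuinely different route from the paper's. The paper establishes existence by applying Brouwer's fixed point theorem to the best-response map $F(q,\tau)=(q(\tau),\tau(q))$ on $B=[0,q^{\max}]\times[\tau^{\min},\tau^{\max}]$, and proves uniqueness by reducing the fixed-point relation \eqref{eq:char_saddle_point} to the scalar equation $q^*=q(\tau(q^*))$ and showing that the function $\varphi(q)= p - \partial_q C_0(q) - (\partial_q C_1(q)-\partial_q\nu_0(q))\tau(q)$ is strictly decreasing; this last step differentiates $\varphi$, handles separately the regions where the clipping at $\tau^{\min}$ or $\tau^{\max}$ is active, and is exactly where the extra $C^2$ hypothesis is consumed. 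Your argument replaces both steps with soft convex-analytic machinery: Sion's minimax theorem yields equality of $\max_q\min_\tau$ and $\min_\tau\max_q$, attainment on the compact rectangle produces a saddle point, and the interchangeability property of saddle points combined with strict concavity in $q$ (from the strict convexity of $C_0$, with $\tau\ge 0$ ensuring the $C_1$ and $\nu_0$ terms do not spoil concavity) and strict convexity in $\tau$ (from $\nu_1>0$) forces uniqueness. What your approach buys is economy of hypotheses --- it needs only $C^1$, not $C^2$, and it sidesteps all boundary bookkeeping for the clipped best response --- so it actually proves a slightly stronger statement than the lemma as written. What the paper's approach buys is the explicit strictly monotone function $\varphi$ whose sign determines whether $q^*$ is interior, $0$, or $q^{\max}$, which is the concrete description of the saddle point that Section~\ref{subsec:property-saddlepoint} and the numerics then exploit. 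Your closing remark correctly identifies this trade-off; the only caveat is that the paper's uniqueness argument is via monotonicity of $\varphi$ rather than a contraction estimate on the composite $\tau\mapsto\tau(q(\tau))$, so the derivative bounds you anticipated as the ``main obstacle'' are not in fact needed in either proof.
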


The next theorem summarises the mathematical analysis of the stochastic differential game. 

\begin{theorem}\label{thm:Bellman-Isaacs}
    Suppose that $\sigma^2 >0$, that the generator $\mathcal{L}^Y$ is strictly elliptic, that the cost function satisfies  Assumption~\ref{ass:cost_regularity} and that $C_0$ and $C_1$ are moreover $C^2$ in $q$. Then the PDE~\eqref{eq:Bellman-Isaacs-simplified} has a unique classical solution $u$,  which is the value function of the game.  Moreover, the strategies   $\bq^* $, $\bgamma^*$  and $\btau^*$ from Proposition~\ref{prop:verification}  are equilibrium strategies for  the game. 
\end{theorem}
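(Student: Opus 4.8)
The plan is to assemble the two ingredients already in place — the saddle-point Lemma~\ref{lem:saddle-point} and the verification Proposition~\ref{prop:verification} — and to supply the single missing analytic input, namely a classical solution of the reduced HJB equation \eqref{eq:Bellman-Isaacs-simplified}. Since the hypotheses include Assumption~\ref{ass:cost_regularity} together with $C^2$-regularity in $q$ (of $C_0$, $C_1$ and $\nu_0$), Lemma~\ref{lem:saddle-point} applies and yields, for each $(x,y)$, a unique saddle point $(\hat q(x,y),\hat\tau(x,y))$; hence the saddle value $G(x,y)$ of \eqref{eq:def-G} is well defined.

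The first substantive step is to show that $G$ is Lipschitz in $(x,y)$. This follows from the estimate already used in Lemma~\ref{lem:lipschitz}(i): the map $g(q,\tau;x,y)$ is Lipschitz in $(x,y)$ with a constant $L_G$ that is uniform over the compact set $(q,\tau)\in[0,q^{\max}]\times[\tau^{\min},\tau^{\max}]$, and such a uniform bound is inherited first by the inner minimum and then by the outer maximum, so that $G=\max_q\min_\tau g$ is Lipschitz with the same constant $L_G$.

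Next I would observe that, after carrying out the inner maximisation over $q$ (which by the saddle-point property returns exactly $G$) and the explicit maximisation over $\gamma$ of the concave quadratic, equation \eqref{eq:Bellman-Isaacs-simplified} is precisely the Hamilton--Jacobi--Bellman equation of the single-agent control problem \eqref{eq:control-prob-producer}, whose state is $(X,Y)$, whose running reward is $G(X_s,Y_s)-\gamma_s-\kappa\gamma_s^2$ and whose terminal reward is the Lipschitz function $h(X_T)$. This problem is structurally identical to the one analysed in Section~\ref{subsec:classical}, with the running profit $\Pi^*$ replaced by the Lipschitz function $G$ and with no tax process at all — so there is neither the integro-differential term $\mathcal L^{\btau}$ nor the extra state variable $\tau$. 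Under the standing assumptions $\sigma^2>0$ and strict ellipticity of $\mathcal L^Y$, the argument behind Theorem~\ref{thm:class_sol} therefore applies (and in fact simplifies, as no jump term is present): truncating to $\gamma\in[0,\bar\gamma]$ renders the Hamiltonian globally Lipschitz in $u_x$, Schauder-type parabolic theory then gives a unique classical solution $u^{\bar\gamma}$, and an a priori gradient bound $|u^{\bar\gamma}_x|\le L$ — with $L$ of the form \eqref{eq:def_Lv} but with $L_{\Pi^*}$ replaced by $L_G$, obtained exactly as in Lemma~\ref{lem:lipschitz} for the value function of \eqref{eq:control-prob-producer} — shows that the constraint is inactive once $\bar\gamma>(L-1)^+/(2\kappa)$. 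For such $\bar\gamma$ the maximiser $\hat\gamma=(u_x-1)^+/(2\kappa)$ is interior, so $u:=u^{\bar\gamma}$ solves the untruncated equation \eqref{eq:Bellman-Isaacs-simplified} and is its unique classical solution.

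With a classical solution in hand, both hypotheses of Proposition~\ref{prop:verification} — existence of the saddle point of $g$ and existence of a classical solution of \eqref{eq:Bellman-Isaacs-simplified} — are verified, and that proposition immediately gives that $u$ is the value of the game and that $\bq^*,\bgamma^*,\btau^*$ are equilibrium strategies. I expect the main obstacle to be the a priori gradient estimate that tames the quadratic nonlinearity $\frac{((u_x-1)^+)^2}{4\kappa}$ and decouples the solution from the truncation level $\bar\gamma$; this is exactly where strict ellipticity together with the Lipschitz regularity of $G$ and $h$ enter, and where the reduction to the already-established Theorem~\ref{thm:class_sol} carries the analytic burden.
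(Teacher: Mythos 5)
Your proposal is correct and follows essentially the same route as the paper: establish the Lipschitz continuity of $G$ via the elementary sup-inequality and Assumption~\ref{ass:cost_regularity}, then obtain the classical solution of \eqref{eq:Bellman-Isaacs-simplified} by transplanting the truncation-plus-gradient-bound argument of Section~\ref{subsec:classical} (noting the absence of jump terms makes it simpler), and conclude via Proposition~\ref{prop:verification}. The only difference is that you spell out the removal of the truncation $\bar\gamma$ in more detail than the paper, which simply refers back to the earlier analysis.
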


\begin{proof} In view of Proposition \ref{prop:verification}, we need to show the existence of a classical solution to the PDE~\eqref{eq:Bellman-Isaacs-simplified}.  For this we first  show that the function $G$ from~\eqref{eq:def-G} is Lipschitz in $(x,y)$. The definition of $G$ implies that 
    $|G(x',y') - G(x,y)| \le \sup_{(q,\tau) \in B} |g (q,\tau; x',y') -g (q,\tau; x,y)|  $, where $B = [0,q^{\max}] \times [\tau^{\min}, \tau^{\max}]$. 
    Now 
    \begin{align*}
    |g (q,\tau; x',y') -g (q,\tau; x,y)| &\le  q^{\max} |p(y')-p(y)| + |C_0(q,x',y') - C_0(q,x,y)| \\&\quad + \tau^{\max}| C_1(q,x',y') - C_1(q,x,y)| \\& \le C \vert (x',y') - (x,y) \vert\,,
    \end{align*}
    where the last inequality follows from the Lipschitz conditions in Assumption~\ref{ass:cost_regularity}. 
    Existence and uniqueness of a classical solution to \eqref{eq:Bellman-Isaacs-simplified} now follow by similar arguments as in Section~\ref{subsec:classical}. In fact, the analysis of \eqref{eq:Bellman-Isaacs-simplified} is even simpler than the analysis of the
    HJB equation in Section~\ref{subsec:classical}, since there are no jump terms in the equation. 
\end{proof}

\subsection{Properties of the optimal tax rate and production} \label{subsec:property-saddlepoint}

We continue with a few comments on the properties of the saddle point $(\tau^*,q^*) = \big( \hat\tau(x),\hat q(x)\big ) $, where we ignore the dependence on $y$ to ease the notation.  Note first that the rebate $\nu_0(q)$ plays an important role for  the form of the saddle point. 
From equation \eqref{ref:tau_q} we see that $\tau^* \ge \bar \tau$ if and only if $\nu_0(q^*)\le  C_1(q^*,x)$. 
In particular, without rebate, that is for $\nu_0\equiv 0$, we have   $\tau^* \ge \bar \tau$ so that  the anticipated tax rate  is higher than the reference tax value. Intuitively,  this incentivises the producer to invest more than she would do under the reference tax scenario, 
so that an increase in uncertainty is beneficial from a societal point of view, see Figure~\ref{fig:Iaverage_2tech_uncertainty}  for a  numerical confirmation and further discussion.  This is an interesting observation which distinguishes the stochastic differential game from the case where the model for the tax dynamics is known. 
On the other hand, with rebate and for full abatement, that is for $C_1 \equiv 0$, we have   $ \tau^* (q) < \bar\tau$. Hence   for full abatement the worst case tax value is lower then the reference tax. This is consistent with the objective of the opponent who  wants to minimize her payments to the producer.

\begin{figure}

\includegraphics[width=.48\textwidth]{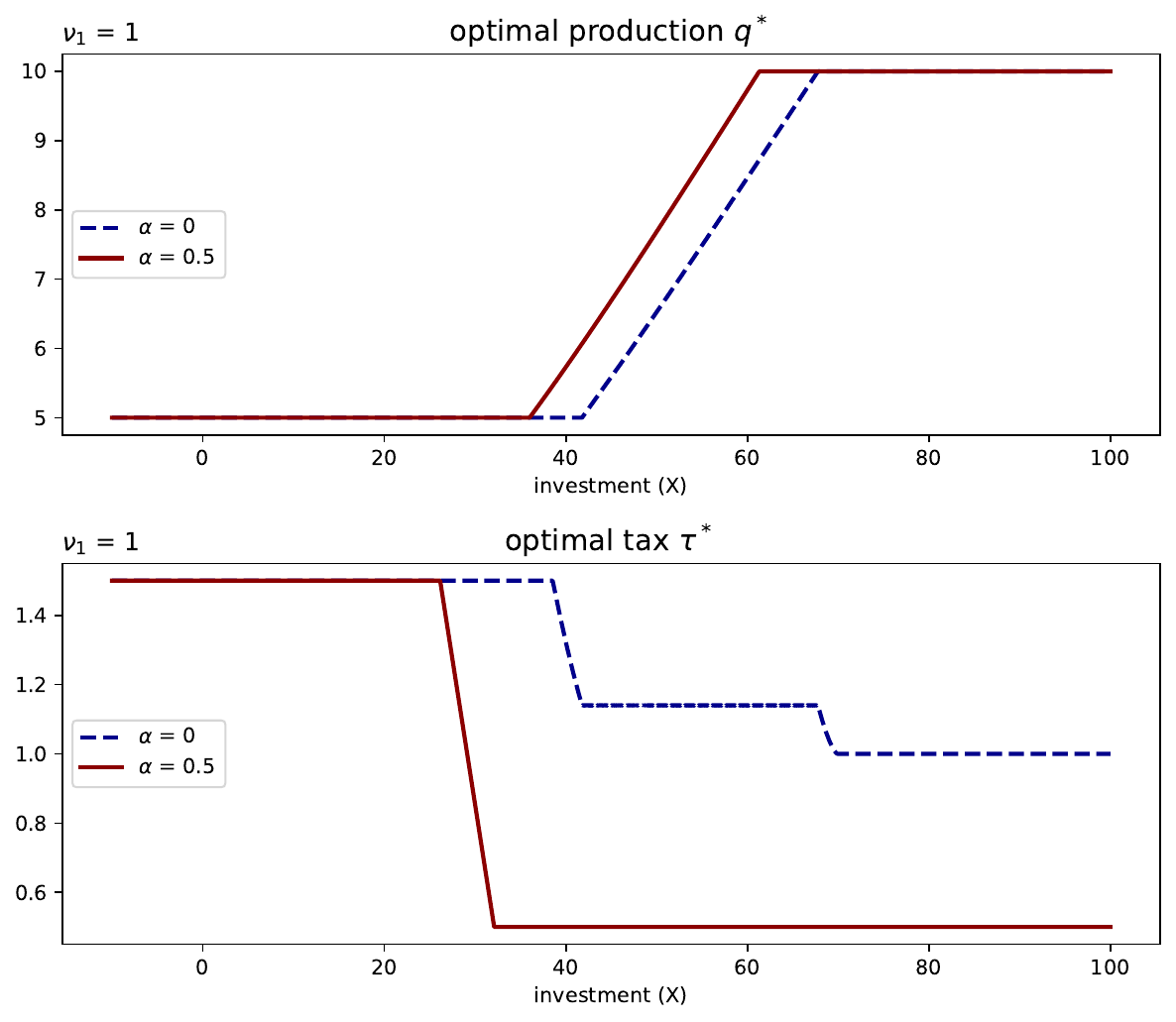} 
\includegraphics[width=.48\textwidth]{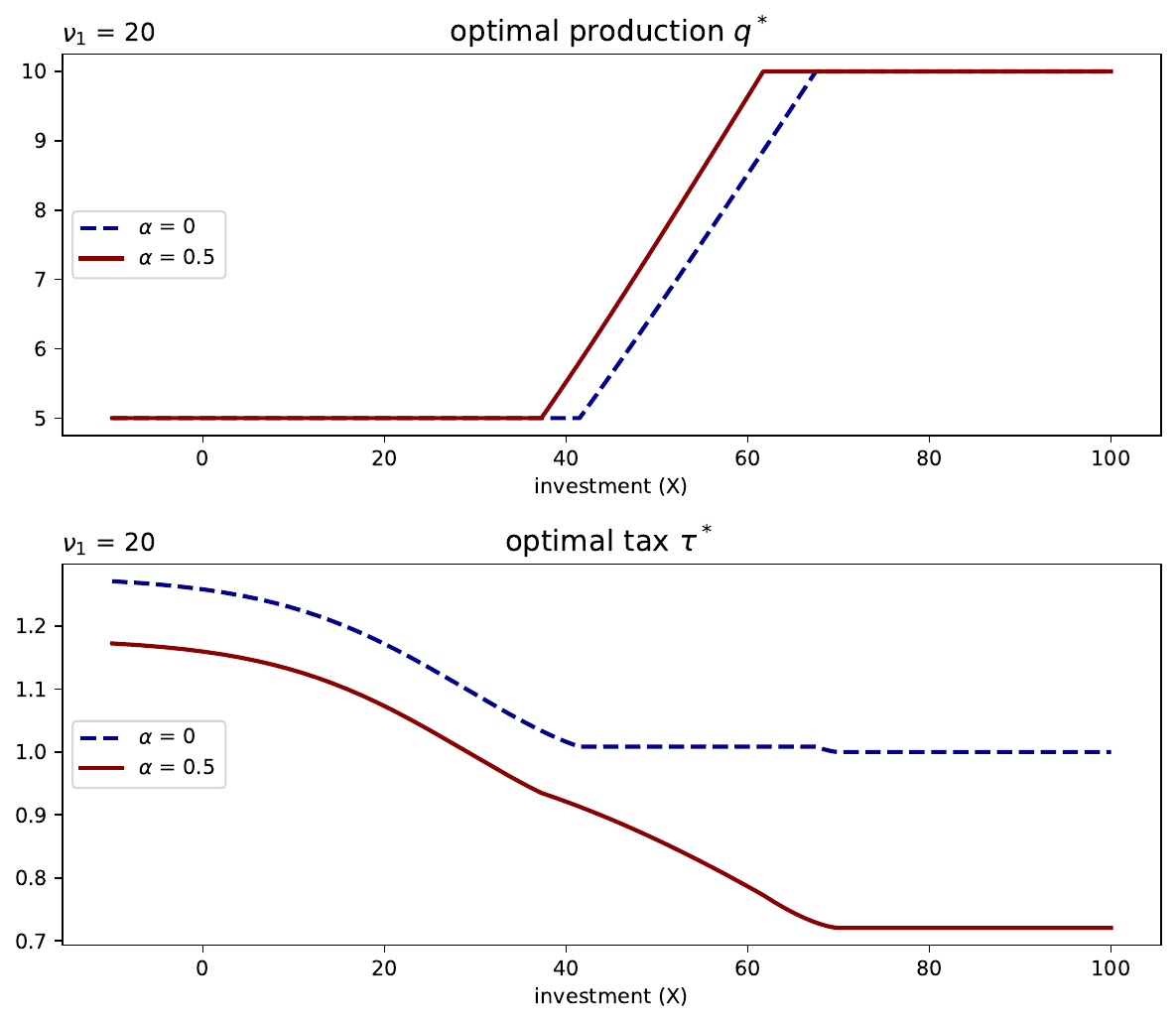} %invest_emission_taxrandom_single_traj_kappacomp_Tcut15_col.pdf

\caption{Representation of the saddle point $\big( \hat\tau(x),\hat q(x)\big )$ for the cost function from the two technologies example. The plots in the left panels  refer to the case of high uncertainty (small $\nu_1$), the plots in the right panels refer to  low uncertainty (large $\nu_1$). We take $\bar \tau =1$ in both cases. The value $\alpha=0$ corresponds to no rebate, $\alpha=0.5$ to rebate. For more details on parameters we refer to Section \ref{sec:experiments-two-tech}. Note that the left and the right  panel on the bottom use a  different scale.} \label{fig:saddle}
\end{figure}

These properties are illustrated in Figure~\ref{fig:saddle}.  
In this figure we provide representations of the saddle point $\big( \hat\tau(x),\hat q(x)\big )$ for a cost function corresponding to the example with  two production  technologies (see Section \ref{subsec:two_technology} for the model and Section \ref{sec:experiments-two-tech} for  parameter specifications). 
We fix a maximum capacity $q^{\max}=10$ and a minimum capacity $q^{\min}=5$. The lower bound on $q$ might correspond to contractual provisions stipulating a minimum amount of energy the producer has to provide at all times. In this example tax rates take value in the interval $[0.5, 1.5]$ and we fix the {most plausible} tax rate as  $\bar\tau=1$. We model the rebate  as $\nu_0(q) = e_b Q\left(\alpha q\right)$ for different values of $\alpha$, and we recall that the penalization for deviating from the most plausible tax rate is $\nu_1 (\tau-\bar \tau)^2$. 

The left panels corresponds to the case of high uncertainty, modeled by a small penalization  for deviating from  $\bar \tau$ ($\nu_1=1$), and those on the right correspond to the case of low uncertainty where deviations from $\bar \tau $ are strongly penalised ($\nu_1=20$). In all panels we consider the cases of no rebate $\alpha=0$ (blue dashed line) and rebate $\alpha=0.5$ (solid red line).
We see that  the optimal production $\hat q(x)$ is increasing in $x$. This is due to the fact that a higher investment level implies lower tax payments and hence a lower marginal cost.  Moreover, a  rebate boosts production, (the red solid line  is above the dashed blue line). Note finally, that in this example  optimal production is fairly robust with respect to the level of tax uncertainty as the function $\hat q$ is very similar in the left and in the right panel.  
The behaviour of tax rate $\hat \tau$ on the other hand is more sensitive to the choice of $\nu_1$.  In particular, for small $\nu_1$ (high uncertainty) the optimal tax rate assumes all values in the interval $[\tau^{\min}, \tau^{\max}]$ and the constraints $\tau\le \tau^{\min}=0.5$ and $\tau \ge \tau^{\max}=1.5$ are binding. In case of large $\nu_1$ (low uncertainty) on the other hand, these constraints are not binding and  the optimal tax value stays close to $\bar \tau$. This behaviour is consistent with formula \eqref{ref:tau_q}. Note finally that  $\hat \tau(\cdot) $ is decreasing in $x$. This is natural from an economic viewpoint, since for a high investment level emissions and hence the income from the  carbon tax are low, so that rebate and penalization lead to a lower value of $\tau^*$.

%\begin{itemize}
%       \item The anticipated produced amount $q^*$ is lower then if taxes are equal to $\bar{\tau}$.
%\end{itemize}

\section{Numerical experiments}\label{sec:numerics}

In this section we report the results of  numerical experiments that  study  the impact of transaction cost, production technology, market structure and randomness in the tax system on the investment strategy and the optimal electricity output  of the producer. In particular we identify certain situations where randomness in  taxes  reduces green investments, which is not desirable from a societal perspective. Throughout we use  the  deep-learning algorithm proposed in \citet{frey2022deep} to compute the value function and the optimal investment rate. We refer to Appendix \ref{appendix:numerical_methods} for the details  on the numerical methodology. 

In Section~\ref{sec:experiments-filter} we present results in the context of the filter technology from Section~\ref{subsec:filter_technology}, in Section~\ref{sec:experiments-two-tech} we discuss results for the two technologies from Section \ref{subsec:two_technology}. In both cases we work under tax risk and   assume that the tax process follows a Markov chain with two possible states $\tau^1 =0$ and $\tau^2 >0$ and transition intensity matrix $G$. This is a special pure jump process with fixed jump sizes that  allows us to capture typical features of a tax process with a small number of parameters.  
We study two special models for the tax evolution, namely the \emph{tax increase} and the \emph{tax reversal}.
In the tax increase case we assume that  $\tau_0 = 0$ and that  the process  jumps upward to $\tau^2>0$ at a  random time. This is a stylized model for the situation where  a government plans to rise carbon taxes in order to comply with    international climate agreements but where the exact timing of the tax rise depends on random political factors. In the  numerical experiments we moreover  assume that the high tax value is an absorbing state and we fix the transition intensities as $g_{12}=0.25$ and $g_{21}=0$. 
In the tax reversal case the tax is initially high ($\tau_0=\tau^2$) but jumps down to $\tau^1$ at a  random time. 
Such a downward jump might occur as a result of lobbying activities or  of a change in government composition. In our numerical experiments  we  fix the transition intensities as $g_{12}=g_{21}=0.25$. Note that  this choice implies  that taxes may jump up again at a later time point. 

In Section~\ref{sec:tax_uncertainty_numerics} we finally discuss examples for the stochastic differential game in the context of the two technologies.

\subsection{Experiments for the  filter technology under tax risk}\label{sec:experiments-filter}

We now discuss results of numerical experiments for the filter technology. We use   the following parameters:   $\delta=0.05$, $\sigma=0.05$, $r=0.02$, the time horizon is $T=15$ years and $h(x)=0$, which is in line with the fact that filters loose their value at the end of the lifetime of the underlying power plant.  We consider two possible parameters for  transaction costs, $\kappa=0.2$ or $\kappa=0.5$, which we refer  to as \emph{low} and \emph{high}  transaction costs, respectively. The high tax value is set to  $\tau^2 =0.2$. 
We work with a \emph{cost function} of the form \eqref{eq:cost_funct_2tech}, where the cost of one unit of raw material is constant and equal to $\bar c$, the quantity of raw material is specified as 
$Q(q)=aq^{\frac{3}{2}}$,  and where the abatement function is given by 
$$e(x)= 
\begin{cases} e_1 x + \frac{e_1^2}{4 e_0} x^2 & \mbox{if } x\leq e_0\,,\\
e_0 & \mbox{if } x>e_0\,.\end{cases}$$ 
In the numerical experiments we use the parameter values  $a=1.25$, $\bar c=1$, $e_0=1.5$, $e_1=0.5$. These parameter values were  chosen to obtain a  qualitatively reasonable behaviour of the production function, they were however not  calibrated to a real production technology.   
Note that for the chosen parameters the abatement cost is globally non-decreasing in $x$, concave and  differentiable and that the maximum abatement level is $e_0$.  

We consider two different market structures. In Section~\ref{sec:fixed_price} we study the case where the amount of electricity to be produced is fixed; in Section~\ref{sec:endogenous-production} we assume that  the electricity output  is endogenous and chosen by the profit-maximizing producer.

\subsubsection{Fixed electricity output}\label{sec:fixed_price}

In this section we assume that  electricity production is fixed and equal to $q^{\max} =4$, for instance since the producer has entered into long-term delivery contracts. In that case the investment decision of the electricity producer is independent of the rebate and of the form of the electricity price, so that we focus only on the randomness in the tax rate. 

In Figure~\ref{fig:single_traj} we plot single trajectories of the cumulative investment for the tax increase (left panel) and the tax reversal (right panel), for different values of the transaction cost parameter. In line with economic intuition, in both cases investments are larger for lower transaction costs. Moreover, the investment level decreases as time approaches the horizon date $T$. This is due to the fact that $\gamma_t^*$ is equal  to zero for $t$ close to $T$, since in that case  the tax savings  generated by new investment are too small to warrant the  expenditure. Finally, in both cases the producer reacts to changes in the tax regimes. Indeed, when a change in the tax rate occurs the trajectory of the investment process suddenly exhibits a change in the slope (i.e. a kink), which, intuitively, corresponds to a jump in the investment rate. In particular, in the tax increase scenario the investment rate $\gamma_t$ jumps upward  as  the tax rate switches  from $\tau^1$ to $\tau^2$. Interestingly, the producer starts to invest already at $t=0 $, even if the tax rate is equal to zero for small $t$. In this way he \emph{hedges} against an anticipated   tax increase. In fact, due to  transaction costs it would be too costly  to wait  until the upward jump in taxes actually occurs and to  invest only thereafter. This hedging behaviour distinguishes our model from the real options literature such as \cite{bib:fuss-et-al-08}, where it is optimal to wait if and when  a regulator acts and to invest only afterwards. In the tax reversal case, investments starts at a high rate due to the high taxation of emissions. As soon as taxes switch to $\tau^1$ the producer reduces or even stops her investment so that $X_t$ decreases due to  depreciation. 

\begin{figure}[ht]
  \centering
  \includegraphics[width=.48\textwidth]{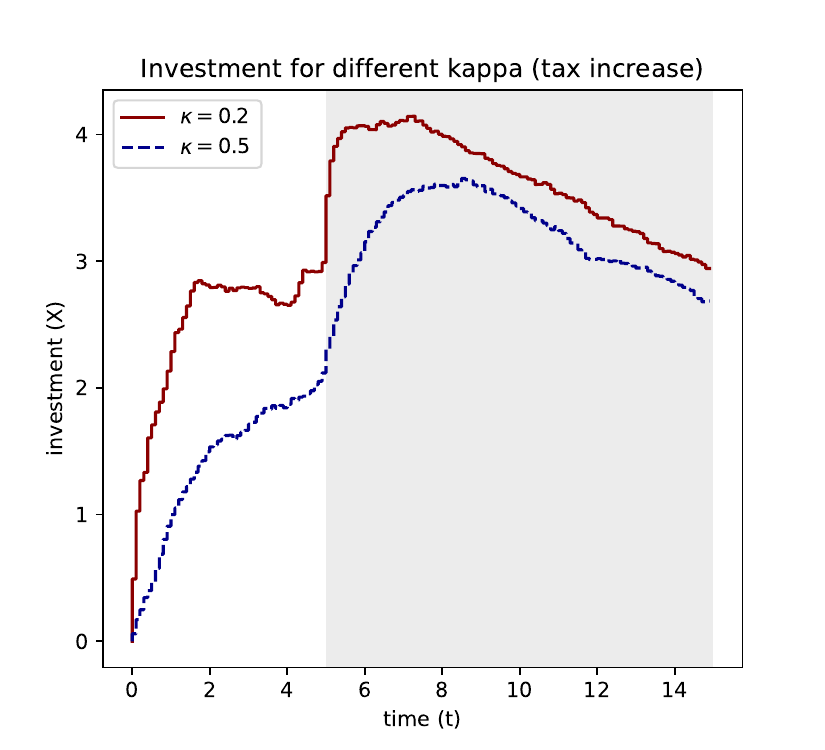}%invest_taxincrease_single_traj_kappacomp_Tcut15_col.pdf
    \includegraphics[width=.48\textwidth]{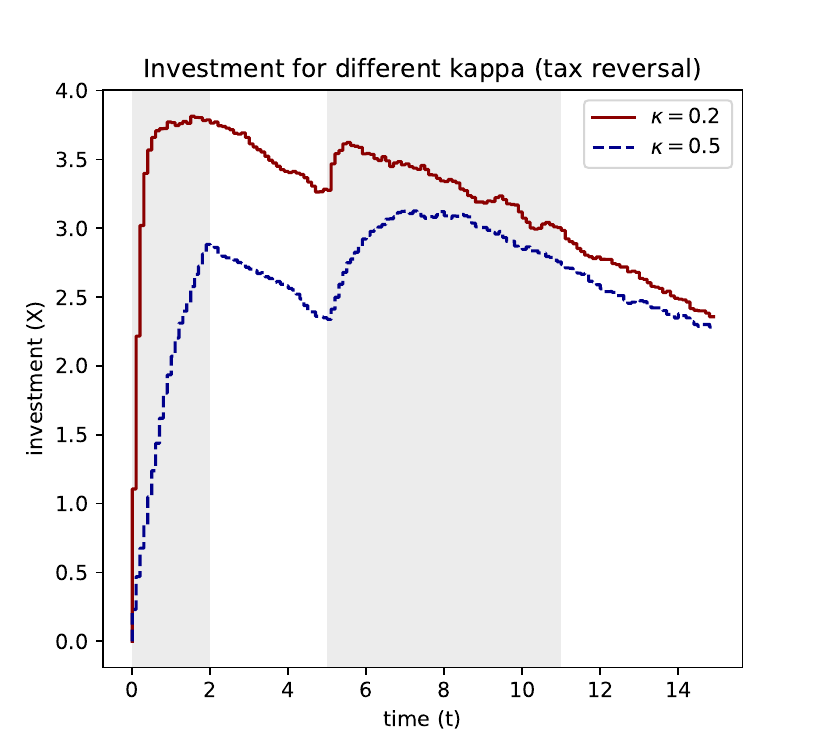} %invest_emission_taxrandom_single_traj_kappacomp_Tcut15_col.pdf
\caption{Single trajectory of cumulative  investment $X$ for the tax increase case (left panel) and the tax reversal case (right panel), under low transaction costs (solid line) and high transaction costs (dashed line). The grey and the white shaded areas correspond to time periods with high tax rate and low tax rate, respectively.} \label{fig:single_traj}
\end{figure}

In Figure \ref{fig:average_inv} we plot the evolution over time of the average investments $\mathbb{E}\left[ X_t \right]$ together with the 5\% and 95\% quantiles of the distribution of $X_t$, for every $t \in [0,T]$. For comparison purposes we moreover plot the optimal investment in a deterministic \emph{benchmark scenario} $\bar \tau (t)$, which is computed as follows: in the   tax increase case we assume that $\bar\tau (t)$ is linear increasing that is $\bar \tau (t) = bt$; in the tax reversal case we assume that the reverence tax rate is constant, $\bar \tau (t) =\bar\tau$. In both cases we assume that the expected average tax rate is identical in the benchmark scenario and in the case with random taxes. For the tax increase  case we therefore have the condition 
\begin{equation}\label{eq:benchmark-increasing}
\mathbb{E}\left[\int_0^T \tau_t \ud t \right] = b \frac{T^2}{2}, \text{ that is } b = \frac{2}{T^2} \mathbb{E}\left[\int_0^T \tau_t \ud t \right]  \,,
\end{equation}% (the simulated b)
which leads to $b= 0.0197$. For the tax reversal scenario we have 
\begin{equation}\label{eq:benchmark-constant} 
\bar \tau = \frac{1}{T} \mathbb{E}\left[\int_0^T \tau_t \,\ud t \right] \,,
\end{equation}
which leads to $ \bar\tau = 0.113.$ %(the simulated tau bar)

\begin{figure}[ht]
  \centering
  \includegraphics[width=.48\textwidth]{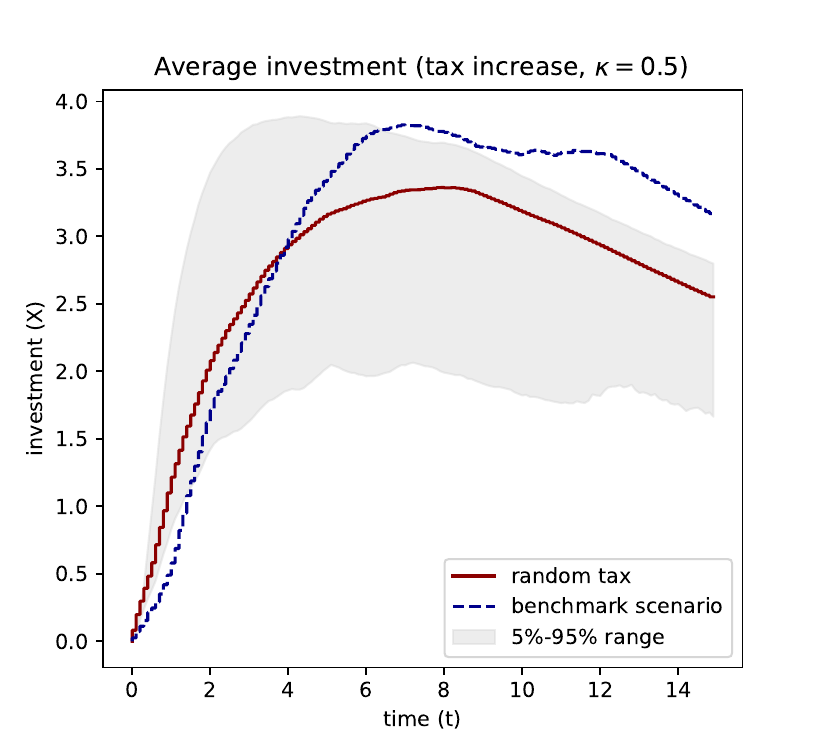}
  \includegraphics[width=.48\textwidth]{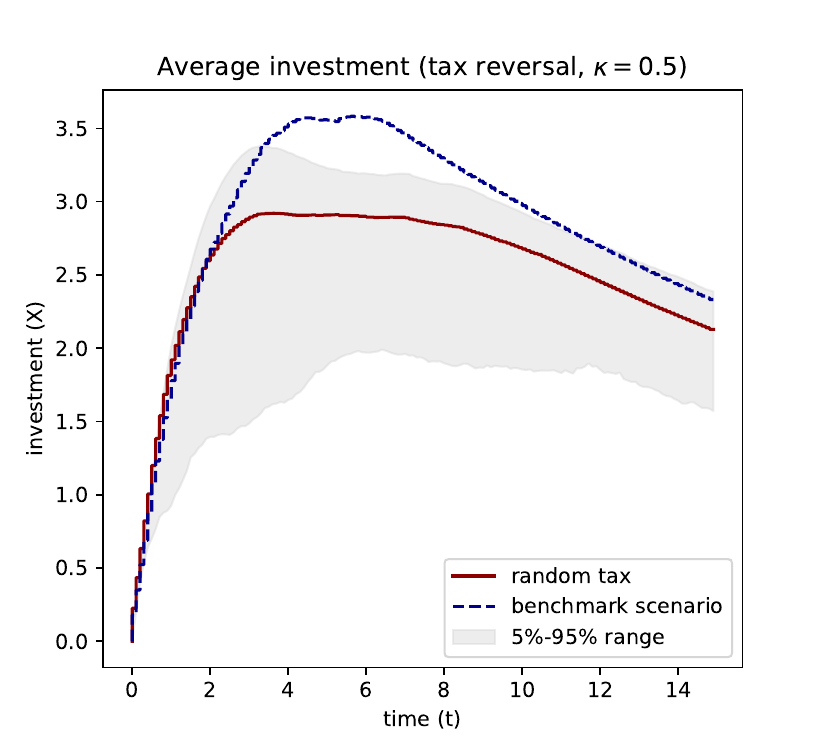}
\caption{Average total investment (solid red line) for the random tax increase  (left panel) and the random tax reversal (right panel), under high transaction costs ($\kappa=0.5$), versus total investment in the deterministic tax case (dashed blue line). The grey shaded areas correspond to the interval between the 5\% and the 95\% quantile of the investment level in the case with random taxes.  
} \label{fig:average_inv}
\end{figure}

Next we report the values for the average emissions at two evaluation dates, namely after 10 and 15 years. Table~\ref{tab:emissions_increase} contains the values for the random  tax increase, where the benchmark is the deterministic increasing tax rate, see \eqref{eq:benchmark-increasing}; Table~\ref{tab:emissions_reversal} gives  the values for the random tax reversal, where the benchmark is the constant tax rate, see \eqref{eq:benchmark-constant}. In the first three columns we report  the $5\%$ quantile, the mean and the $95\%$ quantile of the emission distribution in case of random taxes, in the fourth column we report  the level of emission for the benchmark case, for two different transaction costs parameters. The values in these tables suggest that the benchmark tax rate always leads to  emission levels  that are lower than the mean  emissions under random tax rates,  (in most cases emissions in the benchmark case are even below the $5\%$ quantile  of the emissions distribution).  This confirms the intuition that randomness in future tax rates reduces investments into carbon abatement technologies, so that   a deterministic tax policy would be beneficial for stipulating  emission reduction.

\begin{table}
    \centering 
    \begin{tabular}{|l|cccc|cccc|}\hline
    &\multicolumn{4}{c|}{$t=10$}&\multicolumn{4}{c|}{$t=15$} \\ 
                        & 5\% & mean  &95\%  & benchmark &5\% & mean  &95\%  & benchmark   \\ \hline
        $\kappa=0.2$    & 1.96 & 3.16 & 4.86  & 2.94  & 3.56   & 4.83    & 7.08     & 4.08   \\
        $\kappa=0.5$    & 3.33   & 5.27   & 7.82   & 5.07   & 5.21    & 7.31     & 11.34  & 6.38 \\ \hline
    \end{tabular}
    \vspace{.3cm}
     \caption{Quantiles of the emissions distribution for the random tax increase after $t=10$ and $t=15$ years. We assume that the quantity $q$ is fixed and equal to $q^{\max}=4$. 
     The benchmark tax leads  to lower emissions on average. 
    }
    \label{tab:emissions_increase}

    \centering 
    \begin{tabular}{|l|cccc|cccc|}\hline
    &\multicolumn{4}{c|}{$t=10$}&\multicolumn{4}{c|}{$t=15$} \\ 
                        & 5\% & mean  &95\%  & benchmark &5\% & mean  &95\%  & benchmark   \\ \hline
        $\kappa=0.2$    & 2.73  & 3.02   & 3.80   & 2.28    & 4.82    &  5.28    & 6.52  & 4.17 \\
        $\kappa=0.5$    & 4.31  & 5.11  & 7.34   & 4.18    & 6.74     & 7.83      & 10.78   & 6.52 \\ \hline
    \end{tabular}
    \vspace{.3cm}
     \caption{Quantiles of the emissions distribution for the random tax reversal after $t=10$ and $t=15$ years. We assume that the quantity $q$ is fixed and equal to $q^{\max}=4$. The constant tax leads  to lower emissions on average. }
   \label{tab:emissions_reversal}
\end{table}

Finally, we study how the credibility of an announced carbon tax policy affects the investment decision of the producer and hence the effectiveness of the policy. We begin with the case of the 
random tax increase. In Figure~\ref{fig:single_traj_beliefs}  we compare a path of the  cumulative investment of a producer  who does not belief in an announced future tax increase and who therefore works with a very low intensity ($g_{12} =0.05$)  to the investment path of an investor with $g_{12} = 0.25$,  both for $\kappa=0.5$ and for the same realization of the tax process.   We see that the investment of the investor with $g_{12} = 0.05$ is substantially lower, even if the tax path is the same.     
This is due to the fact that an investor who does not believe in a future tax increase does  not hedge against a future tax rise (see the discussion of Figure~\ref{fig:single_traj}) but he invests only \emph{after} the tax increase has actually materialized. 
The right panel of Figure~\ref{fig:single_traj_beliefs} corresponds to the tax reversal. We compare the optimal cumulative investment of an investor with $g_{21} = 0.25 $ and an investor who believes tax reversal  is very likely, that is  $g_{21} = 0.5$, for the same trajectory  of the tax process (we take  $g_{12} =0.25$ for both investors). We see that  the producer with $g_{21} = 0.5$ invests less then the investor with  $g_{21} = 0.25 $, even if both face the same tax trajectory. 
Table~\ref{tab:emissions_increase_wb} and Table~\ref{tab:emissions_reverse_wb} below, provide the quantiles and the average emissions for the tax increase case and the tax reversal case, respectively, for a producer with a wrong belief on the tax switching intensity versus a producer with the correct belief. These numbers show  substantially larger emissions in the wrong belief case, which confirm the behaviour depicted in Figure \ref{fig:single_traj_beliefs}. 

These experiments underline that a  carbon tax policy that is not credible (i.e.~producers  are  not convinced that an announced tax increase will actually be implemented or they expect  that a high tax regime will soon be reversed) is substantially less effective than a credible policy.

\begin{figure}[ht]
  \centering
  \includegraphics[width=.48\textwidth]{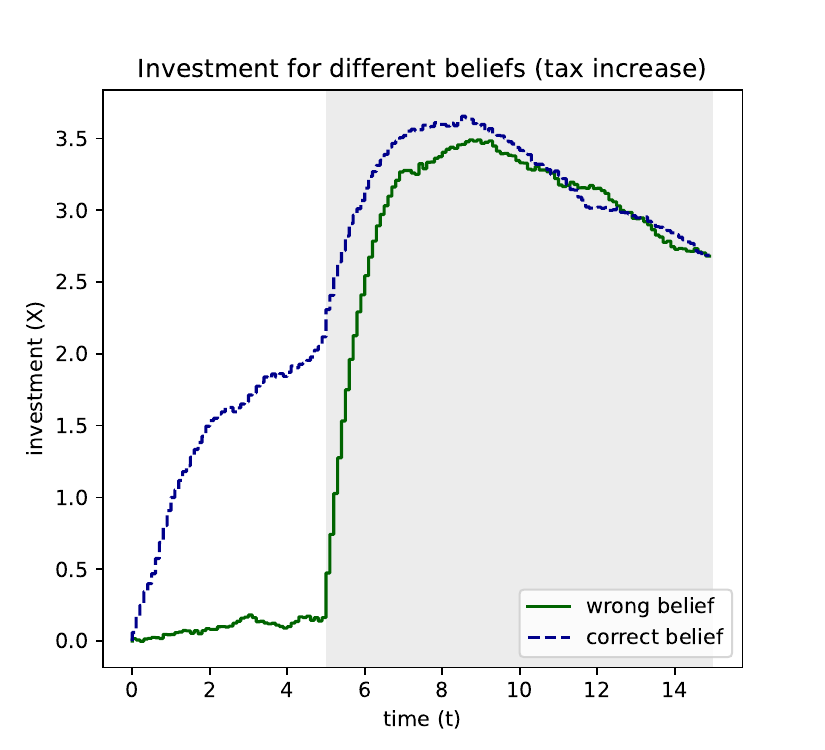}
  \includegraphics[width=.48\textwidth]{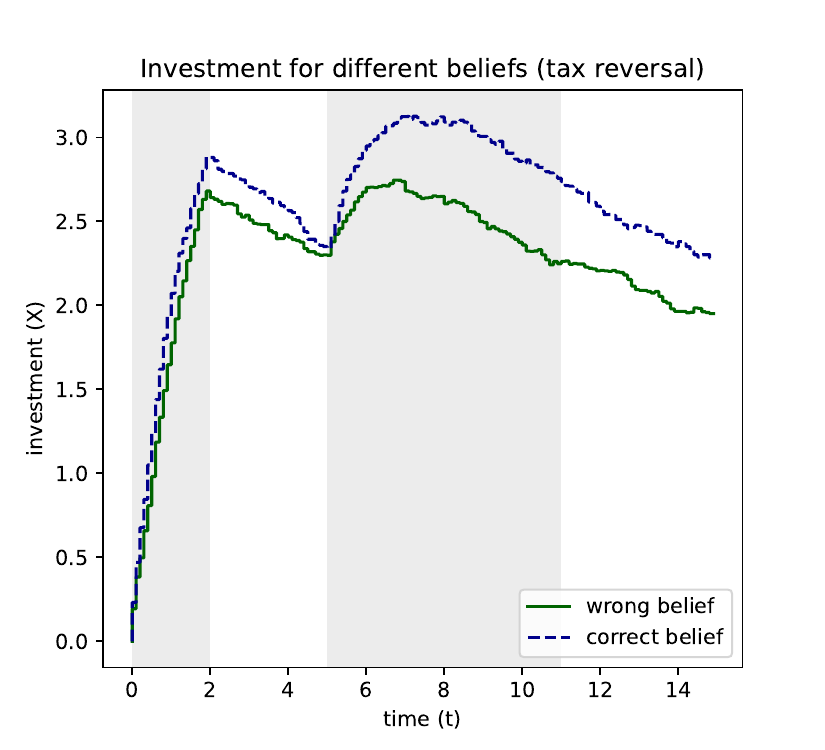}
\caption{Single trajectory of total investment for the tax increase case (left panel) and the tax reversal case (right panel), under high  transaction costs. Here we compare investors with different beliefs about switching intensity. In the tax increase case we compare an investor with $g_{12} = 0.25$ and an investor who believes tax increase is not very likely, that is   $g_{12} =0.05$ ($g_{21} =0$  for both investors) and we plot the investment for the same tax trajectory as in Figure \ref{fig:single_traj}  (left panel). In the tax reversal case  we compare an investor with $g_{21} = 0.25 $ and an investor who believes tax reversal  is very likely ($g_{21} = 0.5$) ($g_{12} =0.25$ for both investors) and we plot the investment for the same tax trajectory as in Figure \ref{fig:single_traj} (right panel).
\label{fig:single_traj_beliefs} }
\end{figure}

\begin{table}
    \centering 
    \begin{tabular}{|l|ccc|ccc|}\hline
    &\multicolumn{3}{c|}{$t=10$}&\multicolumn{3}{c|}{$t=15$} \\ 
                        & 5\% & mean  &95\%   &5\% & mean  &95\%     \\ \hline
        wrong belief    &    3.72&   7.96&    15.03&    5.54&     10.20&       21.41\\
        correct belief    & 3.33   & 5.27   & 7.82     & 5.21    & 7.31     & 11.34   \\ \hline
    \end{tabular}
    \vspace{.3cm}
     \caption{ \label{tab:emissions_increase_wb}Quantiles of the emissions distribution for the random tax increase after $t=10$ and $t=15$ years for an investor with a wrong belief versus a correct   belief on the switching intensity. We assume that the quantity $q$ is fixed and equal to $q^{\max}=4$. Wrong beliefs leads to substantially higher  emissions. 
    }
    
    \centering 
    \begin{tabular}{|l|ccc|ccc|}\hline
    &\multicolumn{3}{c|}{$t=10$}&\multicolumn{3}{c|}{$t=15$} \\ 
                        & 5\% & mean  &95\%   &5\% & mean  &95\%     \\ \hline
         wrong belief    &    5.13&   5.93&    8.67&    8.05&     9.23&       12.86\\
        correct belief     & 4.31  & 5.11  & 7.34     & 6.74     & 7.83      & 10.78    \\ \hline
    \end{tabular}
    \vspace{.3cm}
     \caption{\label{tab:emissions_reversal_wb}Quantiles of the emissions distribution for the random tax reversal after $t=10$ and $t=15$ years for an investor with a wrong belief versus a correct belief on the switching intensity. We assume that the quantity $q$ is fixed and equal to $q^{\max}=4$. Wrong beliefs leads to substantially higher  emissions. 
    }
   
\end{table}

\subsubsection{Stochastic price and endogenous electricity output}\label{sec:endogenous-production}

Now we consider a richer setup  where the selling price of electricity is random and where the producer optimizes the instantaneous electricity production $q^*_t=q^*(X_t, Y_t, \tau_t)$. 
We assume that the energy price is given by  $p_t=\exp(Y_t)$, where the process $Y$ is the solution of the one-dimensional SDE
\[
\ud Y_t= \theta(\mu - Y_t) \ \ud t + \alpha  \ \ud B_t, \quad Y_0=\ln(p_0), 
\]
for a one dimensional Brownian motion  $B=(B_t)_{t \ge 0}$ that is  independent of $W$. We fix $\mu=\ln(5)$, $\theta=1$, $\alpha=0.1$ and $p_0=5$. 
The dynamics of $X$ and $\tau$ are  as in Section \ref{sec:fixed_price}. 
%{\color{magenta}{NOTE: Intensities of the Markov chain are deterministic and as in the case above}}
In this framework we also consider a tax rebate which is modeled by the function $\nu_0(q)=\frac{1}{2} Q(q)e_0$, that is  the tax payments of the  producer are  fully refunded when half of the emissions are abated. 
The instantaneous profit is given by 
$$\Pi(q, x, y, \tau)=p(y) q - \left(Q(q)( \bar c + \tau (e_0 - e_1 x + \big( \frac{e_1^2}{4 e_0}\big ) x^2)^+)\right) + Q(q) \frac{e_0}{2} \tau.$$ 
Since in this example $Q(q) = a q^{\frac{3}{2}}$, we get  
\begin{equation}\label{eq:with_rebate}
q^*(x, y, \tau)=\left(\frac{2p(y)}{3a\left(\bar c+ \tau (e_0 - e_1 x + \big( \frac{e_1^2}{4 e_0}\big ) x^2)^+ - 1/2 \tau e_0 \right)}\right)^2 \wedge q^{\max}. 
\end{equation}
Note that in case there is no rebate, that is when taking $\nu_0(q)=0$, we have that 
\begin{equation}\label{eq:without_rebate}
q^*(x,y, \tau)=\left(\frac{2p(y)}{3a \left(\bar c+ \tau (e_0 - e_1 x + \big( \frac{e_1^2}{4 e_0}\big ) x^2)^+\right)}\right)^2 \wedge q^{\max}. 
\end{equation}
%continue here

Figure \ref{fig:optimal_q} plots trajectories of the optimal production for  the random tax increase (left panel) and for the random tax reversal (right panel). In this example we set  the transaction costs parameter to $\kappa=0.5$. We compare the cases with rebate (solid black lines) with that of no-rebate  ($\nu_0(q)\equiv0$, solid grey lines). The plots are obtained for the same selected price trajectory.  
In these experiments we see that optimal production $q^*$ reacts to three different factors: (i) there are instantaneous jumps occurring at tax switches; (ii) between two consecutive jumps of the tax rate  production fluctuates as it adapts to changes in the  price; (iii) finally, the reaction of $q^*$ to tax switches  depends on the rebate. In particular, when a rebate is applied production is both larger and  more volatile than for $\nu_0(q)\equiv0$. This is in line with formulas \eqref{eq:with_rebate} and \eqref{eq:without_rebate}.  

\begin{figure}[ht]
  \centering
  \includegraphics[width=.48\textwidth]{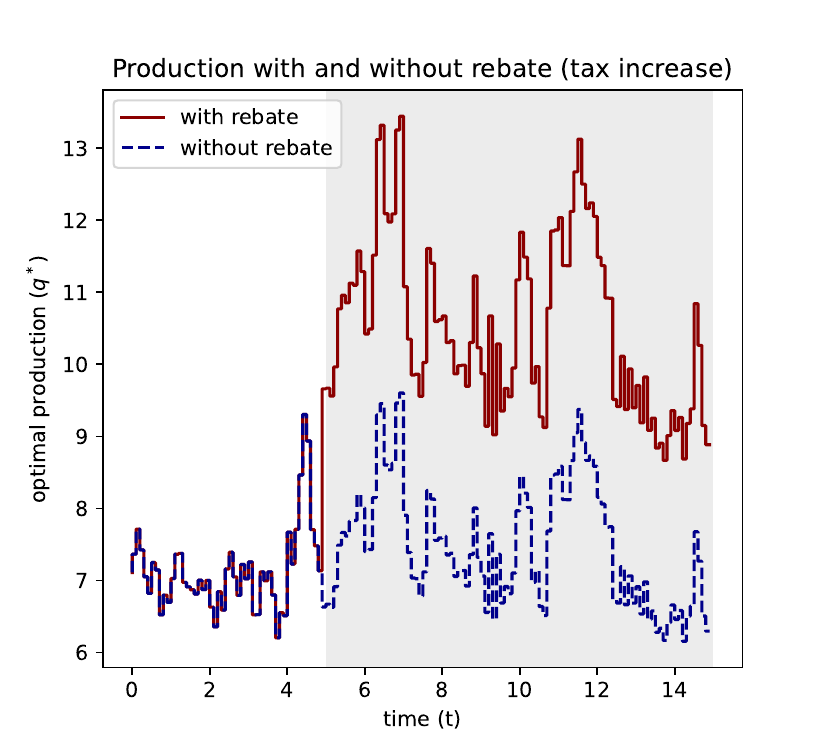}
  \includegraphics[width=.48\textwidth]{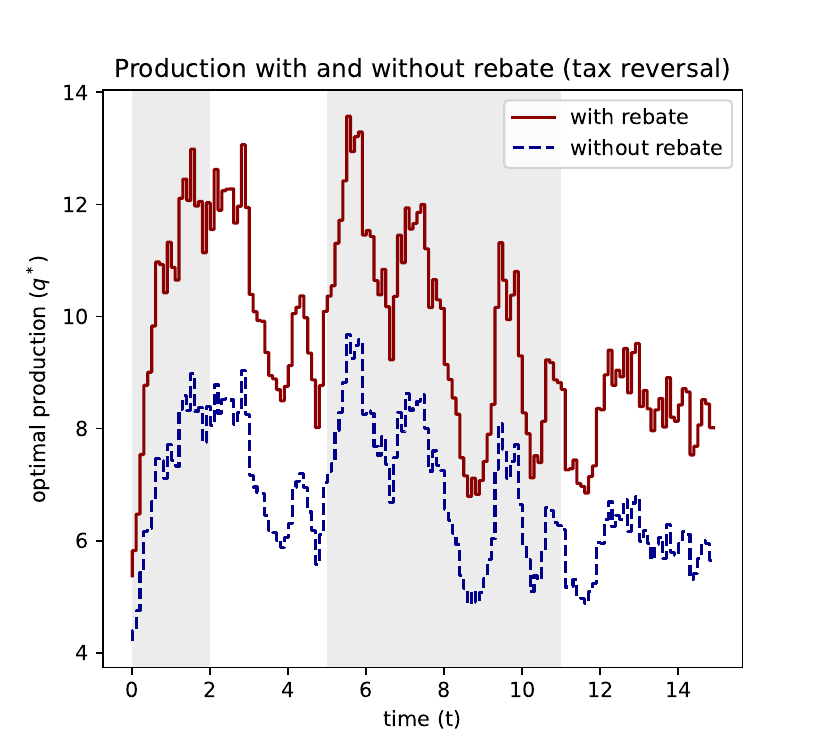}
\caption{Trajectory of the optimal production $q^*$ for the case with rebate (solid red line) and without rebate (dashed blue line). The left panel corresponds to the tax increase scenario and the right panel to the  tax reversal case. Both panels are obtained under the same selected price trajectory under transaction costs $\kappa=0.5$. The grey and white shaded areas correspond to high tax rate and low tax rate, respectively} \label{fig:optimal_q}
\end{figure}

The implications for the optimal  investment are depicted in Figure~\ref{fig:optimal_I}, where we consider the cumulative investment  for the same tax trajectories as in Figure~\ref{fig:optimal_q}.
We clearly see that the producer reacts to changes in the tax regime,  that the hedging effect (i.e. nonzero investment under zero tax level in anticipation of a tax switch) is still present for the random tax increase (left panel) and that hedging is  more prominent when a rebate is applied. We also see that investment levels decrease as $t$ approaches $T$, however this effect is less pronounced than in the case of  fixed electricity output displayed in Figure~\ref{fig:single_traj}.

Most importantly, these plots suggest that a rebate is in general beneficial for investment. To test our last observation we have also looked at the quantiles of the investment distribution with and without rebate. Precisely, we have computed average investments $\mathbb{E}[X_t]$, the $5\%$ quantile and the $95\%$ quantiles of its distribution after 10 and 15 years, and the investment values for benchmark cases. For a better  illustration of the results we have collected these numbers in Table~\ref{tab:investment_increase} for the tax increase and in Table~\ref{tab:investment_reversal} for the tax reversal. In both tables we compare the case with rebate and without rebate (on the first and second row of each table, respectively). The benchmark for the tax increase is computed from~\eqref{eq:benchmark-increasing}, the benchmark for the tax reversal is computed from~\eqref{eq:benchmark-constant}. 
The table supports our previous findings that rebate may be an important driver for investment. Indeed,  investments under rebate are always larger compared to the case where rebate is not applied. Moreover, we again find that, on average, randomness in future tax rates discourages investment. Investments in the benchmark cases, in fact, are always larger than the average investment under random taxes and most of the times above the $95\%$ of the distribution.

\begin{figure}[ht]
  \centering
  \includegraphics[width=.48\textwidth]{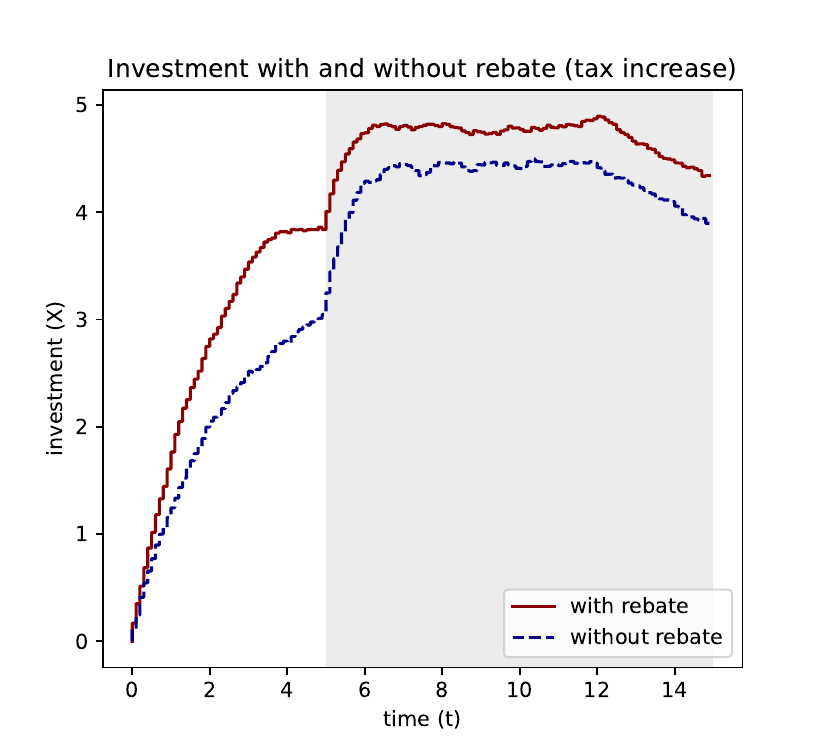}
  \includegraphics[width=.48\textwidth]{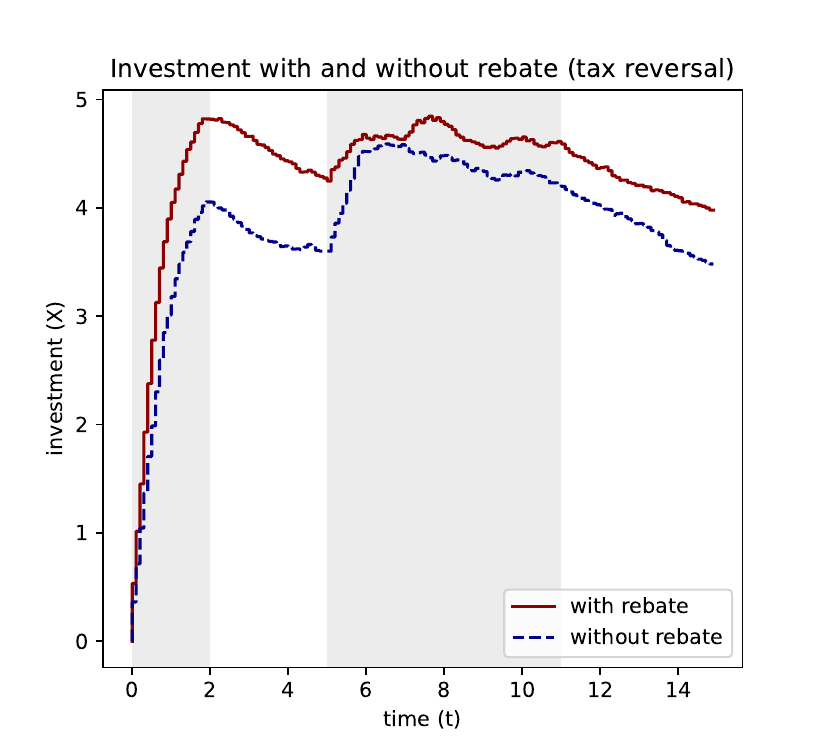}
\caption{Trajectory of the optimal investment $X$ for the case with rebate (solid red line) and without rebate (dashed blue line). The left panel corresponds to the tax increase scenario and the right panel to the  tax reversal case. Both panels are obtained under the same selected price trajectory under transaction costs $\kappa=0.5$. The grey and white shaded areas correspond to high tax rate and low tax rate, respectively
} \label{fig:optimal_I}
\end{figure}

\begin{table}[ht]
    \centering 
    \begin{tabular}{|l|cccc|cccc|}\hline
    &\multicolumn{4}{c|}{$t=10$}&\multicolumn{4}{c|}{$t=15$} \\ 
                        & 5\% & mean  &95\%  & benchmark &5\% & mean  &95\%  & benchmark   \\ \hline
        rebate    & 4.70  & 4.76    & 4.82  & 5.19   &  4.13   &  4.28    & 4.43 &  4.84 \\
        no-rebate  & 4.32 & 4.40   & 4.54   & 5.03  & 3.53   & 3.71    &  3.90    & 4.40  \\ \hline
    \end{tabular}
    \vspace{.3cm}
     \caption{Quantiles of the investment distribution for the random tax increase and the investment values for the deterministic increasing benchmark after $t=10$ and $t=15$ years, with and without rebate, for the filter technology with endogenous production. On average, investment is higher in the benchmark case.}
    \label{tab:investment_increase}

    \centering 
    \begin{tabular}{|l|cccc|cccc|}\hline
    &\multicolumn{4}{c|}{$t=10$}&\multicolumn{4}{c|}{$t=15$} \\ 
                        & 5\% & mean  &95\%  & benchmark &5\% & mean  &95\%  & benchmark   \\ \hline
        rebate    & 3.51  & 4.29  & 4.65   & 4.70    & 2.86     & 3.73      & 4.14  & 3.82\\
        no rebate   & 3.18  & 4.01   & 4.40   & 4.44    & 2.60     & 3.24      & 3.60  & 3.54 \\ \hline
    \end{tabular}
    \vspace{.3cm}
     \caption{Quantiles of the investment distribution for the random tax reversal and the investment values for the constant benchmark after $t=10$ and $t=15$ years, with and without rebate, for the filter technology with endogenous production. On average, investment is higher in the benchmark case.}
    \label{tab:investment_reversal}
\end{table}

%\newpage

\subsection{Two technologies with r tax risk} \label{sec:experiments-two-tech}

This section is dedicated to the analysis of some numerical experiments for the setup described in Example~\ref{subsec:two_technology}, where the producer can invest into a green production technology in addition to a brown one. 
In these experiments we assume the following form for the cost function: 
\begin{equation}\label{eq:numerics2tech}
C(q,x,y, \tau)=  (c_b +e_b\tau) Q_b\left(\left(q-P_g(x)\right)^+\right)\,,
\end{equation}
where $c_b=1$, $e_b=1$, $Q_b(q)=q^{3/2}$, $P_g(x)=p_g(x-\bar x)^+$. Here  $\bar x=20$ represents an initial expenditure that is necessary before the green investment is actually able to produce electricity such as the cost of buying land for solar farms or investments needed  to connect a solar park to   the grid.\footnote{To avoid numerical issues a smooth version of the function $P_g$ was used.}. We   set the productivity parameter to $p_g=0.2$ and we fix the maximum production capacity at $q^{\max}=10$. 
We moreover fix the following parameters: 
$T=15$ years, $h(x)=(0.7x)^+$, $\delta=0.02$, $\sigma=0.2$, $r=0.04$, $\kappa=0.5$. In addition, in the following experiments we assume that selling price of electricity is constant and equal to $p=2.1$, whereas the production $q$ is endogenous.

\subsubsection*{Tax risk.}
Similarly as in the case of the filter technology we model the tax process as a Markov chain with two states and we consider the same models for the tax dynamics, namely the tax increase and the tax reversal. In this section we let   $\tau^2 =1$. 
In addition we  consider a rebate of the form $\nu_0(q)\tau= e_b Q\left(\alpha q\right)\tau$, for $Q(q)=q^{3/2}$ and different values of $\alpha$. In particular $\alpha=0$ corresponds to the case where no rebate is enforced and  $\alpha=\frac{1}{2}$ means that a rebate is applied. Put in other words, rebate exceeds tax payments  as soon as the producer produces more than the fraction $1-\alpha$ of the total output using green technology. %%Figure \ref{fig:qstar} plots trajectories of optimized production over time under different values for the rebate. The left panel corresponds to tax increase and the right panel to tax reversal.    

In Figure \ref{fig:Istar} we plot trajectories of optimal investments  with and without rebate. The left panel corresponds to tax increase and the right panel to tax reversal. For both tax trajectories, there is only  a moderate reaction of investment to tax switch, i.e. a slight modification of the slope of the investment trajectory when moving from the white area to the grey area and vice versa. Note that in case of the two technologies the producer has an incentive to invest into the green technology even for $\tau \equiv 0$, since the green technology has zero marginal cost. Hence  the impact of carbon taxes on investment  is smaller than for the filter technology   where,  without taxes, there is no economic incentive for investing into abatement technology.

Rebate is beneficial for investment for both tax models.  This effect is way more pronounced in the case of the random tax increase. We offer the following explanation. After the tax rise the producer benefits substantially from a high investment level since he  obtains a higher revenue from the  green electricity produced (market price plus rebate). Moreover, the producer knows that the tax rate will not return to $\tau^1 =0$ in the future, so that he can enjoy this high revenue for a longer period.

\begin{figure}[ht]
  \centering
  \includegraphics[width=.48\textwidth]{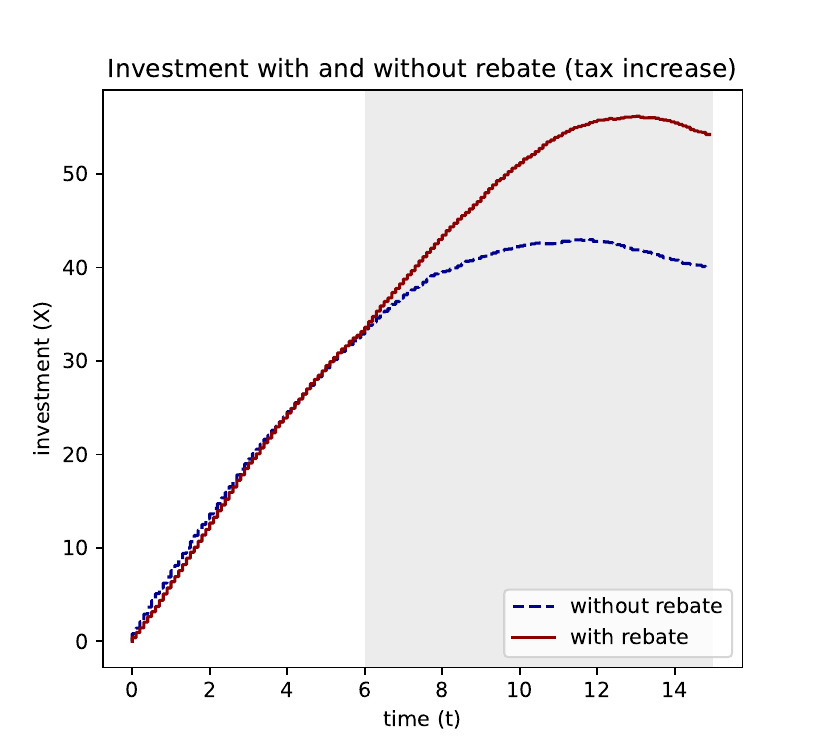}
  \includegraphics[width=.48\textwidth]{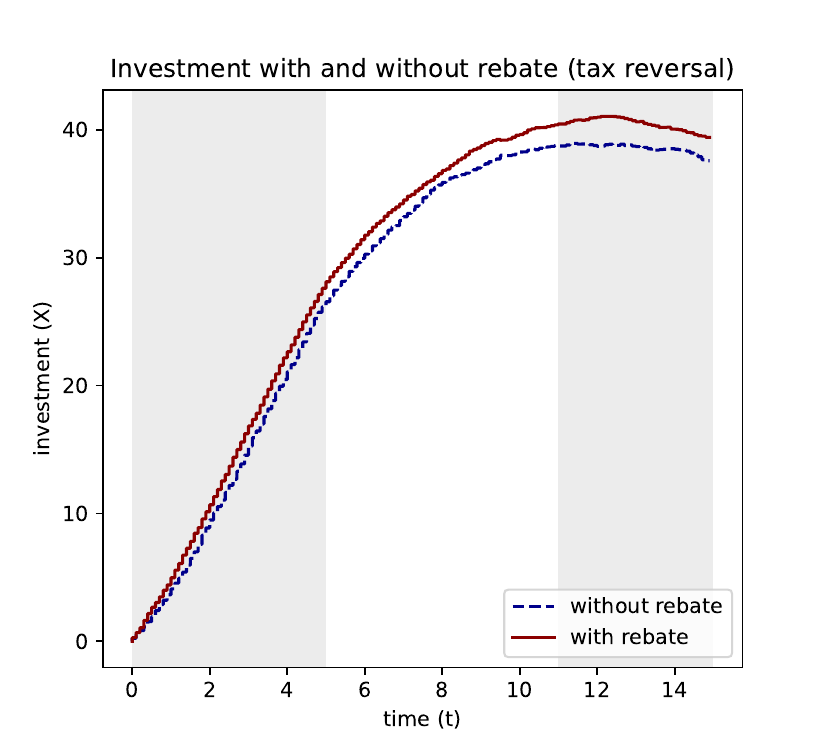}
\caption{Trajectory of the optimal investment $X$ for the case with rebate (solid red line) and without rebate (dashed blue line) for the cost function from the two-technologies example. The left panel corresponds to the random tax increase, the right panel to the  random tax reversal.} \label{fig:Istar}
\end{figure}

%Figure \ref{fig:Iaverage} plots the average investments under different values for the rebate. The left panel corresponds to tax increase and the right panel to tax reversal. 

%\begin{figure}[ht]
%  \centering
%    \includegraphics[width=.48\textwidth]{pictures_2tech_example1/avg_taxinc_I_col.pdf}
%  \includegraphics[width=.48\textwidth]{pictures_2tech_example1/avg_taxrev_I_col.pdf}
%\caption{Average investment $\mathbb{E}[I_t]$ for the case with rebate (solid black line) and without rebate (solid grey line). The left panel corresponds to the tax increase scenario and the right panel to the  tax reversal case.\\
%{\color{magenta}{NOTE: PLOTS TO BE PRODUCED}}} \label{fig:Iaverage}
%\end{figure}

Finally, similarly as  in the example of the filter technology, we provide a comparison of the average investments,  the $5\%$ quantile and the $95\%$ quantile of the investment distribution   with and without rebate after 10 and 15 years. We moreover  report the values for the  benchmark cases which are computes using \eqref{eq:benchmark-increasing} and \eqref{eq:benchmark-constant}. The numbers for the tax increase are reported in Table~\ref{tab:emissions_increase_2tech} and for the tax reversal in Table~\ref{tab:emissions_reversal_2tech}. Consistently with the results for the single trajectory, the quantiles of the investment distribution under  rebate are always  higher than the quantiles without rebate, with a more pronounced  effect in the tax increase scenario. This holds also for the values obtained under the benchmark tax scenarios.
The two technology example confirms that the benchmarks perform better (stipulate more investment) than the average investment with random tax rates.  

\begin{table}[ht]
    \centering 
    \begin{tabular}{|l|cccc|cccc|}\hline
    &\multicolumn{4}{c|}{$t=10$}&\multicolumn{4}{c|}{$t=15$} \\ 
                        & 5\% & mean  &95\%  & benchmark &5\% & mean  &95\%  & benchmark   \\ \hline
        rebate    &  45.18  & 57.35  & 64.16  &  58.87   & 45.28    &   58.47   &  62.65  & 62.19 \\
        no-rebate    & 40.74  & 43.48  & 45.19  & 43.36   &  39.14   &  41.48    &  43.08 & 41.79 \\ \hline
    \end{tabular}
    \vspace{.3cm}
     \caption{Quantiles of the investment distribution for the random tax increase after $t=10$ and $t=15$ years. The linear increasing benchmark tax $\bar \tau (t)=bt$, $b=0.0985$ is computed as in Chapter \ref{sec:experiments-filter}.}
    \label{tab:emissions_increase_2tech}
\end{table}

\begin{table}[ht]
    \centering 
    \begin{tabular}{|l|cccc|cccc|}\hline
    &\multicolumn{4}{c|}{$t=10$}&\multicolumn{4}{c|}{$t=15$} \\ 
                        & 5\% & mean  &95\%  & benchmark &5\% & mean  &95\%  & benchmark   \\ \hline
        rebate    & 35.68  & 40.15  & 44.24  & 42.07   & 35.31    &    39.98  &  44.75 & 42.08 \\
        no-rebate    &  33.89 & 38.09  & 40.79  & 39.78   &   33.66  &   37.03   &  39.40 & 38.49 \\ \hline
    \end{tabular}
    \vspace{0.3cm}
    \caption{Quantiles of the investment distribution for the random tax reversal after $t=10$ and $t=15$ years. The constant benchmark tax $\bar \tau =0.565$, is computed as in Chapter \ref{sec:experiments-filter}.}
\label{tab:emissions_reversal_2tech}
\end{table}

\subsection{Two technologies with  tax uncertainty} \label{sec:tax_uncertainty_numerics} 

Next we report results from numerical experiments for the stochastic differential game where  tax rates are endogenously determined.  We work in the context of Example~\ref{subsec:two_technology} (the example with two technologies),  and we use the same parameters as in Section~\ref{sec:experiments-two-tech} except that we now work wit $T=10$.      We assume that tax rates take value in the interval $[0.5, 1.5]$ and we fix the \emph{most plausible} tax rate as  $\bar\tau \equiv 1$. The tax rebate is given by $\nu_0(q)\tau= e_b Q\left(\alpha q\right)\tau$, for  $ \alpha \in \{0, 0.5\}$, and the penalization for deviating from $\bar \tau$  by  $\nu_1 (\tau-\bar \tau)^2$, where $\nu_1 \in \{1,20\}$. The equilibrium  output $\hat q (x)$ and the equilibrium tax rate $\hat \tau(x)$ for this setup are discussed in Section~\ref{subsec:property-saddlepoint}, see in particular Figure~\ref{fig:saddle}. 

In Figure \ref{fig:Iaverage_2tech_uncertainty} we  plot the average investment $\mathbb{E}[X_t]$  under different values for  rebate and  penalization. The left panel corresponds to the case of high uncertainty ($\nu_1=1 $),  the right panel  corresponds to the  case of  low uncertainty  ($\nu_1 =20$). 
\begin{figure}[ht]
  \centering
  \includegraphics[width=.48\textwidth]{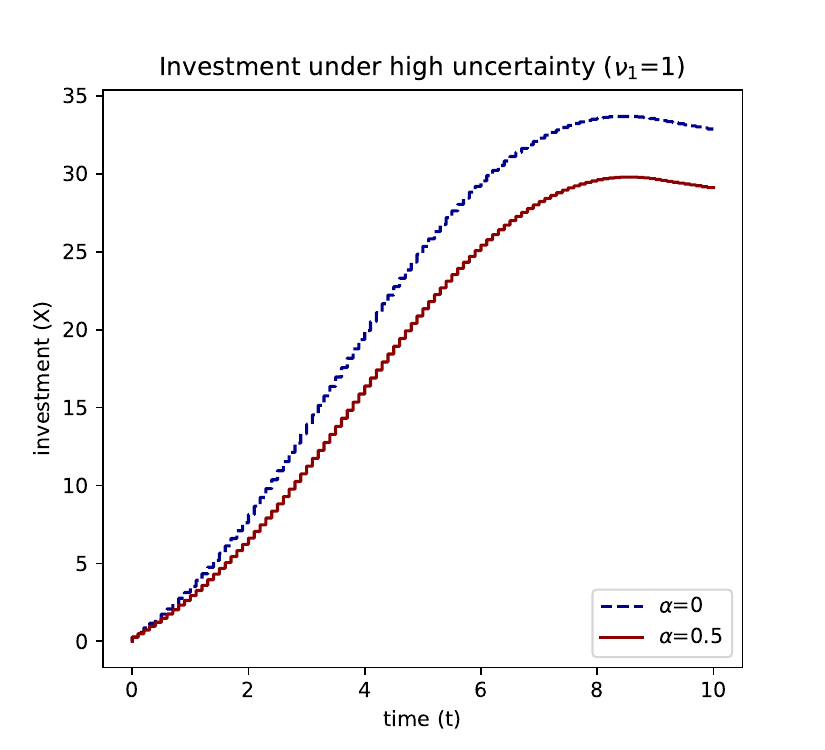}
  \includegraphics[width=.48\textwidth]{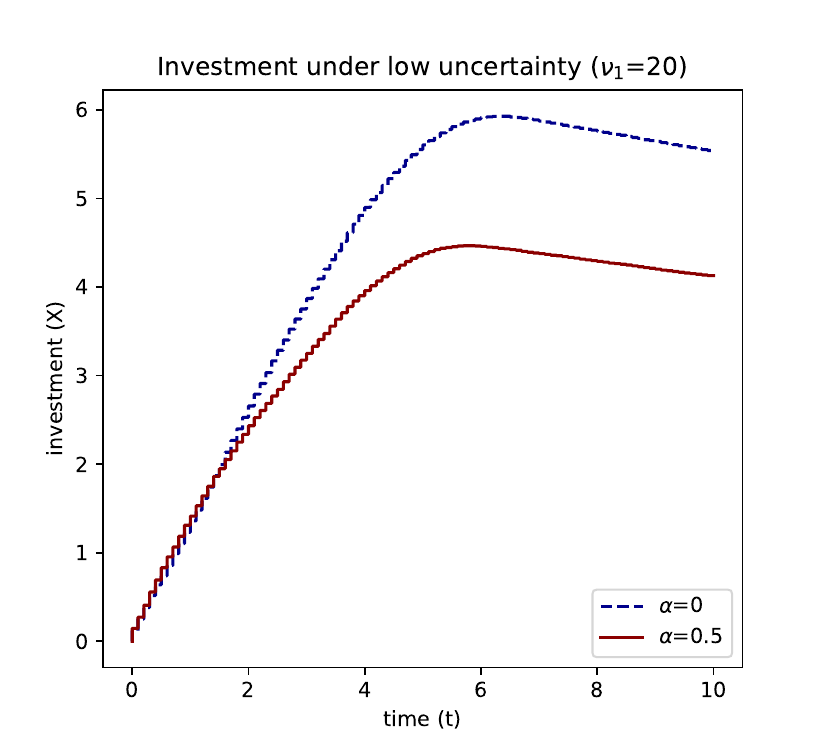}
\caption{Average investment $\mathbb{E}[I_t]$ under tax uncertainty  for  different values for  rebate and  penalization. The left panel corresponds to the case of high uncertainty ($\nu_1=1 $),  the right panel  corresponds to the  case of  low uncertainty  ($\nu_1 =20$). 
} \label{fig:Iaverage_2tech_uncertainty}
\end{figure}

 In this case we see that results from the tax risk paradigm are reversed. First, average investment for  high uncertainty   is  substantially higher  than for low uncertainty. This is due to the fact that  the equilibrium tax rate for $\nu_1 =1$  is higher than the equilibrium tax rate for $\nu_1 =20$, see Figure~\ref{fig:saddle}.\footnote{For $\alpha =0.5 $ this is true only for $x< 40$ but this is the relevant range to incentivise the buildup of green production capacities.} Indeed, higher tax rate generates more investment, so that  high  uncertainty is   beneficial from a societal point of view. Moreover, under tax uncertainty rebate reduces investment whereas in the tax risk case  a rebate led to an increase in investment.  The reason for this  difference is that the introduction of a rebate leads to a  lower equilibrium tax rate in the game between producer and opponent (see  again Figure~\ref{fig:saddle}).

%\begin{itemize} 
%\item Case of tax risk (stochastic control). 
%Describe the model: we consider fixed price and endogenous (optimized) quantity, and $I-\bar I$ in place of $I$, that is we include an initial(unproductive) investment.
%we plot trajectories of the optimal quantity $q^*$, optimal investment and average investment
%\item Case of tax uncertainty (game). Describe the model: we consider fixed price and endogenous (optimized) quantity and taxes, and $I-\bar I$ in place of $I$, that is we include an initial(unproductive) investment. We plot: optimal quantities and taxes vs I, and average investment, as in first version of Verena's thesis. 
%\end{itemize}

\section{Conclusion}\label{sec:conclusion}

In this paper we analyzed the impact of  randomness in carbon tax policy on  the investment strategy  of a stylised  profit maximising electricity producer,  
who has to pay carbon taxes and decides on investments into  technologies for the  abatement of $\CO$ emissions. Adding to  the existing literature, we studied  a framework where  the investment in  abatement technology is divisible, irreversible and subject to transaction costs. We considered two approaches for modelling the randomness in taxes. First we assumed a precise probabilistic model for the tax process, namely a pure jump Markov process (so-called tax risk), which leads to a stochastic control  problem for the investment strategy.  Second, we  analyzed the case of  an {uncertainty-averse} producer who uses a differential game to decide on optimal production and investment.   We carried out a rigorous mathematical analysis of the producer's optimization problem and of the associated nonlinear PDEs (the HJB equation in the case of tax risk and the Bellman-Isaacs equation in the case of the stochastic differential game). In particular, we gave conditions for the existence of classical solutions in  both cases. Numerical methods were used  to analyze quantitative properties of the optimal investment strategy.  

Our experiments show  that under tax risk, the firm is typically less willing to invest into  abatement technologies than in a corresponding benchmark  scenario with a deterministic tax policy. Moreover, if a tax policy  that is not credible (i.e.~producers  are  not convinced that an announced tax increase will actually be implemented or they expect  that a high tax regime will be reversed soon), then it is substantially less effective. This  supports the widely held belief  that randomness in carbon taxes is in general  detrimental for climate policy.  These findings  have the following implications: 
a  climate tax  policy which is very mild initially and which postpones tax increases to random future time points may delay necessary  investment in green technology. On the other hand  a policy which is too stringent initially may generate strong  political pressure to revert to lower taxes,   which would be counterproductive for reducing carbon pollution.

Surprisingly, we found  that  under tax uncertainty  results are reversed. In a scenario with high uncertainty the  producer  invests more than under low uncertainty were taxes are almost deterministic,   so that an increase in uncertainty is beneficial from a societal point of view.  This is an  interesting observation,  which shows that the paradigm used to model the decision making process of the producer is a crucial determinant for the impact of randomness in climate policy.  
It is beyond the reach of this paper to make a scientific judgement as to which  of the two paradigms (risk or uncertainty) comes closer to the real decision making of investors and it is interesting to investigate the difference further. Intuitively,  we believe that the recommendations from the tax risk case are  more relevant for climate  policy.

\appendix

\section{Details on the numerical methodology.  }\label{appendix:numerical_methods}

For the numerical experiments in Section~\ref{sec:numerics}, we implemented the deep splitting method that was proposed  by \citet{beck2021deep} and extended to  partial integro-differential equations (PIDEs) by \citet{frey2022deep}. This approach uses deep neural networks to approximate the solution of a PIDE together with the gradients. Hence, we are able to compute the value function for the considered stochastic control problems and determine investments in green technology accordingly. In this section  we  present the basic idea of the algorithm. 
We consider a PIDE of the following form
\begin{align}\label{eq:PIDE}
	\begin{cases}
	u_t(t,z)+\mathcal{L} u (t,\psi)  = f\big(t,\psi,u(t,\psi), \partial_\psi u(t,\psi)\big)    & \text{on } [0,T) \times \R^n \, , \\
	u(T,\psi)=g(\psi) &    \text{on } \R^n  \,. 
	\end{cases}
\end{align}
Here $n=d+2$, $\psi=(x,y,\tau)\in \mathbb{R}^{n}$, $\partial_\psi u$  is the gradient of $u$ with respect to the space variable, $u_t$ the derivative with respect to the time variable, and
\begin{align*}
\mathcal{L} u(t,\psi) &:=  b(t,\psi) \cdot \partial_\psi u (t,\psi)  +\frac{1}{2} \sum_{i,j=1}^{n} (\Sigma \Sigma^\top)_{ij}(t,\psi)   u_{\psi_i\psi_j}(t_{\psi_i\psi_j},\psi) %\label{eq:sec1_generatorz}
\\ & \qquad + \int_{\R} u(t,\psi+\widetilde{\Gamma}(t,\psi,z))-u(t,\psi)  \,  m(\ud z) \,,  \nonumber 
\end{align*}
where $b(t,\psi)=(-\delta x, \alpha(t,y), 0)^\top\in \mathbb{R}^n$, $\Sigma(t,\psi)\in \mathbb{R}^{n\times n}$ has components $\Sigma_{1,1}(\psi)=\sigma$, $\Sigma_{i,j}(t,\psi)=\alpha_{i-1,j-1}(t,y)$ for $i,j=2\dots,n-1$ and all other components  equal to zero, and $\widetilde{\Gamma}(t, \psi,z)\in \mathbb{R}^{n}=e_n  \Gamma(t,\psi, z)$, where $e_n$ is the $n$-th standard vector in $\mathbb{R}^n$. 
Next, we consider an auxiliary process, denoted as $\Psi$, whose dynamics correspond to the generator $\mathcal{L}$,
\begin{align}
\Psi_t&={ \Psi}_0+\int_{0}^{t} b( \Psi_s)  \, \ud s + \int_{0}^{t} \Sigma( \Psi_{s})  \, \ud \widetilde{W}_s + \int_{0}^{t} \int_{\R^d} \widetilde{\Gamma}( s,\Psi_{s-},z) {N}(\ud s, \ud z) \,.  \label{eq:FBSDEJ_X}
\end{align}
Specifically, in our context, $\Psi_t=(X^0_t, Y_t, \tau_t)$, where $X^0$ is the uncontrolled version of the process $X$, i.e. for $\gamma_t=0$.
The first step of the considered numerical algorithm is to  divide the time horizon into $N$ equidistant grid points $0=t_0 < t_1 < \dots < t_N=T$, where each interval is $\Delta t := 1/N$. Then, we discretize the process $\Psi$ using a method such as the Euler-Maruyama scheme along the given time grid. This discretization yields approximations for $\Psi_{t_i}$ at each time step $t_i$. We denote these approximation points as $\widehat{\Psi}_{t_i}$.  For the solution $u$ we consider a BSDE representation. Given that the PIDE admits a classical solution, we can apply It$\hat{\text{o}}$'s formula and express the solution as
\begin{align}
u({t_{i}},\Psi_{t_{i}})&=u({t_{i+1}},\Psi_{t_{i+1}})-\int_{t_{i}}^{t_{i+1}} \!\!\! f(s,\Psi_s,u(s,\Psi_s) ,\partial_\psi u(s,\Psi_s) ) \, \ud s - \int_{t_i}^{t_{i+1}}\!\!\! \Sigma(s, \Psi_s)^\top \partial_{\psi} u(s,\Psi_s) \,  \ud \widetilde{W}_s \\&- \int_{{t_{i}}}^{{t_{i+1}}} \int_{\R^d} u(s,\Psi_s+\widetilde{\Gamma}(s,\Psi_{s-}, z))-u(s,\Psi_{s-}) \, ({N}(\ud s, \ud z) - m(\ud z) \ud s) \, ,  \label{eq:FBSDEJ_Z}
\end{align}
Both the integral with respect to the Brownian motion and the integral with respect to the compensated jump measure are martingales (assuming sufficient regularity of $u$). Taking conditional expectations  leads to
\begin{align}
u(t_i,\Psi_{t_i}) = \mathbb{E} \Big [u(t_{i+1},\Psi_{t_{i+1}} )-\int_{t_i}^{t_{i+1}} f(s,\Psi_s,u(s,\Psi_s) ,\partial_\psi u(s,\Psi_s) ) \, \ud s \big | \Psi_{t_i}\Big]\, .  \label{eq:FBSDEJ_Z_2}
\end{align}
The discretization  allows us to approximate the integral term in the conditional expectation by $f(t_{i+1}, \widehat \Psi_{t_{i+1}},u(s,\widehat{\Psi}_{t_{i+1}}) ,\partial_\psi u(t_{i+1},\widehat{\Psi}_{t_{i+1}})) \Delta t$. Using the $L^2$-minimality of conditional expectations we represent $u(t_i,\Psi_{t_i})$ as the unique solution of the minimization problem over all $C^1$ functions
\begin{align*}
    \min_{U \in C^1} \,  \mathbb{E}_{t_i} \Big[ \big( U -  u(t_{i+1},\widehat{\Psi}_{t_{i+1}} )+f(t_i,\widehat{\Psi}_{t_{i+1}},u(t_{i+1},\widehat{\Psi}_{t_{i+1}} ) ,\partial_\psi u(t_{i+1},\widehat{\Psi}_{t_{i+1}} )) \Delta t  \big )^2\Big ]
\end{align*}
This minimization problem serves as a loss function for deep neural networks in the deep splitting algorithm, and  the algorithm can be summarized as follows.

\textit{Deep splitting algorithm.} Fix a class $\mathcal{N}$ of $C^1$ functions  $\mathcal{U}:\R^d \to \R$ that are given in terms of neural networks with fixed structure.    
 Then the  algorithm proceeds by backward induction  as follows.
\begin{enumerate}
	\item Let $\mathcal{\widehat U}_N=g$. 
	\item For $i=N-1,\dots,1,0,$  choose
	 $ \mathcal{\widehat U}_i $ as minimizer of the  loss function
$L_i \colon \mathcal{N} \to \R$,
	\begin{align}\label{eq:loss_fun}
	\mathcal{U} \mapsto \mathbb{E}\left[
 \Big| \mathcal{\widehat U}_{i+1}(\widehat{\Psi}_{t_{i+1}})  -\mathcal{U}(\widehat{\Psi}_{t_{i}}) -
    \Delta t \, f\Big(t_{i},\widehat{\Psi}_{t_{i+1}},\mathcal{\widehat U}_{i+1}(\widehat{\Psi}_{t_{i+1}}), D_x \mathcal{\widehat U}_{i+1}(\widehat{\Psi}_{t_{i+1}}) \Big)\Big|^2 \right] .
	\end{align}
	\end{enumerate}

Specifically, to address the numerical solution of this problem in our case studies, we generate simulations of trajectories for the processes $\Psi$. % $\tau$, the exogenous process $Y$, and the auxiliary investment process described as:
%\[
% X_t =  X_s + \int_s^t \delta  X_u \, \ud u + \sigma W_{t-s}.
%\]
These simulations were carried out over the time interval $[0,15]$, discretized into 150 equally spaced time points (that is $N=150$ intervals). The inherent non-linearity of this problem is represented by the function:
\[
f(t,x, y, \tau, u_\psi) =\Pi^*(\psi) + \frac{\left((\partial_{\psi_1}u-1)^+\right)^2}{4\kappa} = \Pi^*(x, y,\tau) + \frac{\left((\partial_x u-1)^+\right)^2}{4\kappa}.
\]
We use deep neural networks with 2 hidden layers, each containing 40 nodes. In total, each experiment involves 150 networks. The neural networks are initialized with random values using the Xavier initialization scheme. We employ mini-batch optimization with a mini-batch size of $M = 10,000$, incorporating batch normalization. The training process spans 10,000 epochs, and the loss function is minimized through the Adam optimizer. The learning rate starts at $0.01$ and with a decay of $0.1$ every 4,000 steps. The activation function for the hidden layers is the sigmoid function, while the output layer uses the identity function. 

An advantage of this methodology is flexibility. The approach allows for effortless dimensionality adjustments in the state process $\Psi$ or modifications of its dynamics, with the only necessary adaptation being the Euler-Maruyama scheme for $\Psi$. For further details on deep splitting algorithms for general nonlinear PIDEs we refer to \citet{frey2022deep}.

\section{Some discussions and proofs for the tax risk setting}\label{appendix:tech_results}

In this section we present various technical results that are related to the characterization of the value function as  classical  solution of the HJB equation~\eqref{eq:HJB-bounded}. 

\subsection{Comments and extensions of Lemma \ref{lem:lipschitz}} \label{appendix:extension}
We now make few comments on possible extensions of the result stated in Lemma \ref{lem:lipschitz}, as anticipated in Remark \ref{rem:comments}. 

\noindent 1. \emph{Maximum capacity expansion.} In some examples it may make sense to assume that investment can expand maximum capacity. In such case a similar argument as in the proof of lemma \ref{lem:lipschitz}-(i) can be used to get regularity of the function $\Pi^*$. How to do that is briefly outlined next. 
If the maximum capacity depends on the investment level, i.e. $q^{\max}(x)$, for some Lipschitz continuous, increasing and bounded function, the above arguments can be extended. Indeed, in this case we have   
\begin{align}
&\left|\Pi^*(x^1, y^1, \tau^1) - \Pi^*(x^2,  y^2, \tau^2)\right|\\
&=\left|\max_{q \in[0,q^{\max} (x^1)]}\Pi(x^1, y^1, \tau^1, q)- \max_{q \in[0,q^{\max}(x^2)]} \Pi(x^2, y^2, \tau^2 q)\right|\\
& \le \left|\max_{q \in[0,q^{\max} (x^1)]}\Pi(x^1, y^1,\tau^1, q)- \max_{q \in[0,q^{\max}(x^1)]} \Pi(x^2,y^2,\tau^2, q)\right|\\
&+ \left|\max_{q \in[0,q^{\max} (x^2)]}\Pi(x^2, y^2,\tau^2, q)- \max_{q \in[0,q^{\max}(x^1)]} \Pi(x^2, y^2,\tau^2, q)\right|.
\end{align}
In the last expression, the first term is estimated exactly as in the proof of Lemma \ref{lem:lipschitz}-(i). In the second term, Lipschitzianity in $x$ is proved using Lipschitzianity of the function $q^{\max}(x)$.   

\noindent 2. \emph{Concavity of the value function.} 
If $\Pi^*$  and $h$ are concave in $x$, then, it can be proved that $V $ is also concave $x$.
Before going to the proof of this result, we highlight that an example where $\Pi^*$ is concave arises, for instance if $\Pi(t,x,y,\tau, q)$ is concave in $x$ and $q^*$ is a fixed quantity.
Indeed, %although concavity of $h(x)$ can be simply imposed, 
the function $\Pi^*(t,x,y,\tau)$ 
is the result of an optimization and hence not an input variable of our model. This implies in particular that we cannot simply impose concavity, but we need to verify it, and, in general, even if $\Pi$ is concave, the supremum over $q$ may not be so. 

To establish concavity of the value function one can follow the steps below. We let for simplicity $t=0$.  Consider $X_0^1, X_0^2 >0$ and strategies $\bgamma^1, \bgamma^2 \in \mathcal{A}$. Denote by $X^j$, $j=1,2$, the investment process with initial value $X_0^j$ and strategy $\bgamma^j$ and let for $\lambda \in [0,1]$, $0 \le t \le T$,   $\bar X_t = \lambda X_t^1 + (1-\lambda) X_t^2$. Then it is easily seen that
$$ \ud \bar X_t = \lambda \gamma_t^1 + (1-\lambda) \gamma_t^2 - \delta \bar X_t \ud t + \sigma \ud W_t$$
so that $\bar X$ is the investment process corresponding to the strategy $\bar \bgamma \lambda \bgamma^1 + (1-\lambda) \bgamma^2$ with initial value $\bar X_0$ (Here we use that the dynamics of $X$ are linear). Concavity of $\pi^*$  and $h$ now imply that
$$ J(0,\bar X_0, y,\tau, \bar \bgamma) \ge \lambda J(0, X_0^1 , y,\tau,  \bgamma^1) + (1-\lambda) J(0, X_0^2 , y,\tau,  \bgamma^2)\,.$$
Concavity of $V$ follows from this inequality,  if we choose $\bgamma^j$  as an $\varepsilon$-optimal strategy  for the problem with initial value $X^j_0$.

\subsection{Proof of Theorem \ref{thm:class_sol}}
From Proposition \ref{thm:viscosity}, the function $ V(t, x, y, \tau)$ is Lipschitz continuous in $(x,y)$, H\"older in $t$ and the unique viscosity solution  of the PIDE
\begin{align}
 & v_t (t, x, y, \tau) + \Pi^*(x, y, \tau)  + \int_{\mathcal{Z}} v(t, x,y,  \tau + \Gamma(t,y,\tau, z))m(\ud z)\\
 & \quad+ \sum_{i=1}^d \beta_i(t, y) v_{y_i}(t, x, y, \tau)+  \frac{\sigma^2}{2}v_{xx}(t, x, y,  \tau) +  \frac{1}{2}\sum_{i,j=1}^d \mathfrak{S}_{ij}(t,y) v_{y_iy_j}(t, x, y, \tau) \\
  & \quad + \sup_{0\le \gamma\le \bar \gamma} (\gamma (v_{x}(t, x, y, \tau) -1) -\kappa \gamma^2 ) - \delta x v_x (t, x, y, \tau)= \left(r+m(\mathcal{Z})\right) v(t, x,y, \tau),
\end{align}
with the terminal condition $v(T,x,y,\tau)= h(x)$.
For  fixed $\tau$ we define the function $f^\tau(t, x, y):=\int_{\mathcal{Z}} V(t, x, y, \tau + \Gamma(t,y, \tau,z))m(\ud z) + \Pi^*(x,y, \tau)$.
Then for every fixed $\tau$,  $V^\tau (t,x,y) := V(t, x,y, \tau)$ is a viscosity solution of the equation
\begin{align}  
& u_t (t, x, y)  + \sum_{i=1}^d \beta_i(t, y) u_{y_i}(t, x, y) +  \frac{\sigma^2}{2}u_{xx}(t, x,y) +  \frac{1}{2}\sum_{i,j=1}^d \mathfrak{S}_{ij}(t,y) v_{y_iy_j}(t, x, y, \tau) \\
  &\quad +  \sup_{0\le \gamma\le \bar \gamma} (\gamma (u_{x}(t, x,y) -1) -\kappa \gamma^2 )- \delta x u_x (t, x,y) +  f^{\tau}(t, x, y)= R u(t, x,y), \label{eq:parabolic}
\end{align}
with $u(T, x, y)=h(x)$ and $R = r + m(\mathcal{Z})$. Note that this is a quasilinear parabolic  PDE since there are no non-local terms and  since for all $p \in \mathbb{R}$,
$$\sup_{0\le \gamma\le \bar \gamma}\{ p\gamma - \gamma -\kappa \gamma^2\}=\left\{\begin{aligned} 0 \qquad & \quad \mbox{ if } p<1\\\frac{[(p-1)^+]^2}{4\kappa} &\quad \mbox{ if } 1\le p \le 2 \kappa +1 \\ \kappa \bar \gamma^2 \quad &\quad \mbox{ if } p > 2 \kappa +1 \end{aligned}\right.$$
Our goal is to show that this PDE has a classical solution which coincides with $V^\tau$. We proceed  in several steps.

\emph{Step 1.} Fix $K >0$ and define the set $Q_K=[0,T]\times B_K$, where $B_K=\{\mathbf{x} \in \mathbb{R}^{d+1}: \|\mathbf{x}\|^2\le K^2\}$,  and let $\mathcal G_K=\{T\}\times B_K \cup [0,T) \times S_K$ where $S_K=\{\mathbf{x} \in \mathbb{R}^{d+1}: \|\mathbf{x}\|^2 = K^2\}$. Consider the terminal boundary value problem consisting of the PDE \eqref{eq:parabolic} and the boundary condition $u = V^\tau$ on  $\mathcal G_K$. We now use  Theorem 6.4 in \citet[Ch. 5]{bib:lsu-68} to show that this terminal boundary value problem has a classical solution that is moreover smooth on the interior of  $Q_K$. For this we  formulate \eqref{eq:parabolic} as a parabolic equation in divergence form. We define for 
$y=(y_1, \dots, y_d)$, $p_2=(p_{2,1}, \dots, p_{2,d})$ the functions
\begin{align}
A(t, x, y, u, p_1, p_2)= & \sum_{i=1}^d \beta_i(t, y) p_{2,i} + \sup_{0\leq \gamma \leq \bar \gamma } \{\gamma (p_1-1) -\kappa \gamma^2\}- \delta x p_1    - R u +  f^\tau(t, x, y)\\
a(t, x, y, u, p_1, p_2) = & A(t, x, y, u, p_1, p_2)+ \sum_{i,j=1}^d \partial y_i \mathfrak{S}_{ij}(t,y) p_{2,j} \\
a_1(t, x, y, u, p_1, p_2) = & \ \frac{\sigma^2}{2} p_1\\
a_{2,i}(t, x, y, u, p_1, p_2) = & \frac{1}{2}\sum_{j=1}^d  \mathfrak{S}_{ij}(t,y) p_{2,j}, \quad i =1, \dots p
\end{align}
Then  \eqref{eq:parabolic} can be written in divergence form as in equation (6.1) of  \cite[Chapter 5]{bib:lsu-68}:
\begin{align}
\partial_t u & + {\partial_x} a_1(t, x, y, u, u_x, u_y)  +  
 \sum_{i=1}^d \partial y_i a_{2,i}(t, x, y, u, u_x, u_y) -  a(t, x, y, u, u_x, u_y)=0.
\end{align}
Note that the signs differ  from those in \cite{bib:lsu-68} since we are dealing with a \emph{terminal} value condition.

\begin{comment}
We define for 
$y=(y_1, \dots, y_d)$, $p_2=(p_{2,1}, \dots, p_{2,d})$ the functions
\begin{align*}
A(t, x, y, u, p_1, p_2)= & \sum_{i=1}^d \beta_i(t, y) p_{2,i} + \sup_{0\leq \gamma \leq \bar \gamma } (\gamma (p_1-1) -\kappa \gamma^2)- \delta x p_1  \\& \ - \left(r+ m(\mathcal{Z})\right) u +  f^\tau(t, x, y)\\
a(t, x, y, u, p_1, p_2) = & \ A(t, x, y, u, p_1, p_2)+ \sum_{i,j=1}^d \frac{\partial \mathfrak{S}_{i}(t,y) \cdot p_2}{\partial y_j} \\
a_1(t, x, y, u, p_1, p_2) = & \ \frac{\sigma^2}{2} p_1\\
a_{2,i}(t, x, y, u, p_1, p_2) = & \frac{1}{2}\sum_{i=1}^d  \mathfrak{S}_{i}(t,y) \cdot p_2, \quad i =1, \dots p
\end{align*}
where $\mathfrak{S}_i(t,y)$ denotes the $i$-th row of the matrix $\mathfrak{S}(t,y)$ and $\mathfrak{S}_{i}(t,y) \cdot p_2$ is the inner product between vectors $\mathfrak{S}_{i}(t,y)$ and $p_2$. 
Then the equation \eqref{eq:parabolic} can be written in divergence form as in equation (6.4) of  \cite[Chapter 5]{bib:lsu-68}:
\begin{align}
&u_t + \frac{\partial a_1(t, x, y, u, u_x, u_y)}{\partial u_x} u_{xx}(t,x,y) +  \\
&\sum_{i,j=1}^d \frac{\partial a_{2_i}(t, x, y, u, u_x, u_y)}{\partial y_j}  u_{y_iy_j}(t,x,y) -  A(t, x, y, u, u_x, u_y)=0,
\end{align}
where here $u_y$ indicates the gradient of $u$ with respect to $y$. 
\end{comment}
Next we show that the assumptions of Theorem 6.4 in \cite[Ch. 5]{bib:lsu-68} are satisfied on the domain $Q_K$. Note first that 
the set $S_K$ is the boundary of the $d+1$-dimensional circle so it is smooth and hence satisfies condition (A) (see \cite[page 9]{bib:lsu-68}).  
\begin{comment}
\begin{align}
&\frac{\partial a_1(t, x, y, u, p_1, p_2)}{\partial p_1}=\frac{\sigma^2}{2}>0, \quad  \frac{\partial a_1(t, x, y, u, p_1, p_2)}{\partial p_{2_i}}=0,  \\
&\frac{\partial a_{2_i}(t, x, y, u, p_1, p_2)}{\partial p_1}=\frac{\sigma^2}{2}=0, \quad \frac{\partial a_{2_i}(t, x, y, u, p_1, p_2)}{\partial p_{2_j}}=\frac{1}{2}\sum_{i=1}^d \mathfrak{S}_{ij}(t,y)>0
\end{align}
\end{comment}
Moreover,  
\[
 A(t, x, y, u, 0, 0) u =   - \left(r+m(\mathcal{Z})\right) \  u^2 +  f^{\tau}(t, x, y) \ u \ge - b_1 u^2-b_2
 \]
for $b_1, b_2\ge 0$, since %functions $g_{ij}(t,y)$ are assumed to be bounded on $[0,T]\times [-K,K]$ since continuous on a bounded domain,  and 
the functions  $f^{\tau}(t, x, y)$ are bounded on $Q_K$. To see the latter recall that  $f^{\tau}(t, x, y):=\int_{\mathcal{Z}} V(t, x, y, \tau+\Gamma(t,y,\tau, z))m(\ud z) + \Pi^*(x, \tau,y )$, and   $\Pi^*(x, \tau,y )$ and $V$ are bounded on the bounded set $Q_K$ respectively $Q_K \times [0,\tau^{\max}]$.  hence the inequality holds. That guarantees that condition a) of Theorem 6.4 \cite[Ch. 5]{bib:lsu-68} holds. 
Conditions (3.1), (3.2), (3.3), (3.4)  \cite[Ch. 5]{bib:lsu-68} are immediate. In particular, the condition $\sigma^2 >0$ and the strict ellipticality of $\mathfrak{S}(t,y)$ ensure that the crucial condition (3.1) holds.  Finally,  since $V(t,x,y, \tau)$ is a Lipschitz viscosity solution of the HJB equation, the boundary condition is Lipschitz, which in particular implies condition c) of Theorem 6.4 \cite[Chapter 5]{bib:lsu-68}. 
By applying Theorem 6.4 in \cite[Ch. 5]{bib:lsu-68}, we thus get that in any interior subdomain $Q_K$ the HJB equation has a classical solution $U^\tau(t,x,y)$ which coincides with $V^\tau(t, x, y)$ on the boundary $\mathcal G_K$.

\emph{Step 2.} Next we show that $U^\tau(t,x,y)=V(t, x, y, \tau)$ in the interior of $Q_K$ for every $K$ which allows to conclude that $V(t, x, y, \tau)$ is smooth in the interior of $Q_K$. 
To prove this we apply the comparison principle given by \cite[Corollary  8.1, Ch.5]{fleming2006controlled}. Note that inequality (7.1) on page 218 of the book is implied by in particular by Lipschitzianity of the functions $\alpha, \beta, \Gamma$ in $y$. 
Then we obtain that $U^\tau(t,x,y)=V(t, x, y, \tau)$ on $Q_K$. 

Since $K$ was arbitrary we finally  get that $V$ is smooth everywhere. Hence $V$ is also a classical solution of the HJB equation. \eqref{eq:HJB-bounded}, which concludes the proof.

\subsection{An example with strict viscosity solution}\label{app:viscosity}
In the following section we present an example illustrating that, in general, the value function may be  non-smooth and hence  a strict viscosity solution of the HJB equation. Specifically, we examine the cost function associated with the filter technology, assuming a fixed electricity price $\bar p$ and a fixed production quantity $\bar q$.  To present this example with minimal technical difficulties, we make certain assumptions. We set $r$ and $\delta$ to zero, take  the residual value as $h(X_T) = 0$, and assume deterministic tax rate equal to $\bar\tau>0$. Additionally, we adopt the  abatement technology  $e(x) = (1 - x)^+$ and assume no external variations in the investment level ($\sigma = 0$). This assumption is  crucial for our  our example, since for $\sigma >0$ the HJB equation has a classical solution by Theorem~\ref{thm:class_sol}   Section~\ref{subsec:classical}.

In this setting $X_t=X_0+\int_0^t\gamma_s \ud s$, and the value function is given by
\begin{align} \label{eq:def-tildeV}
V(t, x)&=\sup_{\bgamma \in \mathcal A}\mathbb{E}_t\left[\int_t^T\! \left(\bar p \bar q  - \bar q (\bar c + (1- X_s)^+ \bar \tau)-\gamma_s-\kappa \gamma_s^2 \right)\, \ud s \right] =:  \bar p \bar q - \bar q \bar c + \bar q \tilde V(t,x)\,\, \\
\intertext{where}
\label{eq:def-tildeV_2}
\tilde V(t,x) &= \sup_{\bgamma \in \mathcal A}\mathbb{E}_t\left[\int_t^T (1- X_s)^+ \bar \tau -\gamma_s-\kappa \gamma_s^2 \,\ud s \right]\,.
\end{align}
In the sequel we concentrate on $ \tilde V$. Note first that for  $x\ge 1$,   the optimal strategy is $\bgamma^* = 0$, since choosing $\gamma_s >0$ is costly but generates  no additional reduction in emissions. Therefore, $\tilde V(t,x) =0$ for $x \ge 1$. Below  we show that
\begin{equation}\label{eq:bound_on_tildeV}
\tilde V(t,x) \le  - (1-x) \big ( 1 \wedge (T-t)\bar \tau \big )\,,\quad x\le 1.
\end{equation}
Let $\tilde V_{x^-} (t,1)$ be the left derivative of $\tilde V (t,\cdot)$ at $x=1$. It follows that
$$\tilde V_{x^-} (t,1) = \lim_{h \to 0^+}  \frac{1}{(-h)} ( \tilde V(t,1-h) - \tilde V(t,1))  \ge  ( 1 \wedge (T-t)\bar \tau \big ).$$
Hence $\tilde V(t,\cdot) $ has a kink at $x=1$, and from equation \eqref{eq:def-tildeV_2}, we get that $V$ is  a strict viscosity solution of the HJB equation.  

\noindent Now we turn to the inequality \eqref{eq:bound_on_tildeV}. Obviously,
\begin{equation} \label{eq:no_tc}
\tilde V(t,x)  \le  w(t,x):= \sup_{\bgamma \in \mathcal A}\mathbb{E}_t\left[\int_t^T - (1- X_s)^+ \bar \tau -\gamma_s  \, \ud s \right]\,.
\end{equation}
Since in \eqref{eq:no_tc} transaction costs are zero,  the producer can push $x$ instantaneously to any level $x' >x$, incurring a cost of size   $x'-x$. It follows that for $x<1$, the  ``limiting optimal strategy'' in $\eqref{eq:no_tc}$ is to push the investment level to $1$ immediately at $t$, provided the resulting tax savings $\bar \tau (1-x) (T-t)$ exceed the cost $1-x$, and to choose $\bgamma \equiv 0$  otherwise. This gives
$$
u(t,x) = \begin{cases}
-(1-x) & \text{ if } \bar \tau (T-t) >1\,,\\
-(1-x) & \text{ if } \bar \tau (T-t) \le 1\,,
\end{cases}
$$
that is, $u(t,x) = -(1-x) \big ( 1 \wedge (T-t)\bar \tau \big )$,  $x\le 1$, which implies  \eqref{eq:bound_on_tildeV}.

\section{Differential game} \label{app:game}
\subsection{Proof of Lemma~\ref{lem:saddle-point}}
Define the compact and convex set $B:= [0,q^{\max}]
\times [\tau^{\min}, \tau^{\max}]$ and the function $F\colon B \to B$ by $F(q,\tau) = (q(\tau),\tau(q))'$.  Note that $q(\tau)$ and $\tau(q)$ and hence $F$ are continuous on $B$ (since $\partial_q C_0$ and $\partial_q C_1$ are strictly increasing and since $\nu_1 >0$). By \eqref{eq:char_saddle_point}, $(q^*,\tau^*)$ is a saddle point of $g$ if and only if it is a fixed point of $F$ on $B$. The existence of a fixed point of $F$ follows immediately from Brouwers fixed point theorem, which establishes the existence of a saddle point of $g$.     

For uniqueness note that the  pair $(q^*,\tau^*)$ is a saddle point if and only if $q^*$  satisfies the fixed point relation $q^* = q(\tau(q^*))$ and if $\tau^* = \tau(q^*)$. Define the mapping $\varphi \colon [0,q^{\max}] \to \R$ with 
$$\varphi(q):= p - \partial_q C_0(q) - (\partial_q C_1(q)  - \partial_q\nu_0 (q) )\tau(q).$$
By the FOC characterizing $q(\tau)$,  a solution   $q^* \in [0,q^{\max}]$ is a solution of  the  equation $q^* = q(\tau(q^*))$ if one of the following three conditions hold (i) $\varphi(q^*) =0$; (ii)  $\varphi(0) < 0$ , in which case  $q^*=0$; (iii) $\varphi(q^{\max}) > 0$, in which case  $q^*=q^{\max}$.  Below we show that $\varphi$ is strictly decreasing. It follows that there is at most one $q^* \in [0,q^{\max}]$ that fulfills (i), (ii) or (iii)  and hence at most one saddle point.

To show that $\varphi$ is strictly decreasing we first  we compute the derivative of $\varphi$ for those values of $q$ with $\tau(q) \in (\tau^{\min}, \tau^{\max})$. We get 
\begin{align*}
\partial_q \varphi (q) &= - \partial^2 _q C_0 - (\partial^2_q C_1(q) - \partial^2_q\nu_0 (q)) \tau(q) - (\partial_q C_1(q)  -\nu_0'(q) ) \partial_q \tau(q) \\&=    - \partial^2 _q C_0 - (\partial^2_q C_1(q) -\partial^2_q\nu_0(q)) \tau(q) - \frac{1}{2 \nu_1}(\partial_q C_1(q)  -\partial_q \nu_0(q) )^2\,,
\end{align*}
which is negative due to the assumptions on  $C_0$ $C_1$ and $\nu_0$. For values of $q$ where the constraints on $\tau$ bind we have $ \partial_q\tau(q) =0$ and 
$$\partial_q \varphi (q) = - \partial^2 _q C_0 -(\partial^2_q C_1(q) -\partial_q^2 \nu_0(q)) \tau(q) <0. $$ 
It follows that $\varphi$ is absolutely continuous with strictly negative  derivative and hence strictly decreasing.

%\bibliography{bibliography}
%\bibliographystyle{plainnat}

\end{document}